\numberwithin{equation}{section}
\newtheorem{Thm}{Theorem}[section]
\newtheorem{Lemma}[Thm]{Lemma}
\newtheorem{Prop}[Thm]{Proposition}
\theoremstyle{definition}
\newtheorem{Rmk}[Thm]{Remark}
\def\bfx{\mathbf{x}}
\def\bfy{\mathbf{y}}
\def\frakg{\mathfrak{g}}
\def\frakB{\mathfrak{B}}
\def\frakL{\mathfrak{L}}
\def\frakM{\mathfrak{M}}
\def\bbr{\mathbb{R}}
\def\calB{\mathcal{B}}
\def\x{\times}
\def\bs{\backslash}
\def\aut{\mathrm{Aut}}
\def\id{\mathrm{id}}
\def\ad{\mathrm{ad}}
\def\diag{\mathrm{diag}}
\def\O{\mathrm{O}}
\def\R{\mathrm{(R)}}
\def\ric{\mathrm{ric}}
\def\Ric{\mathrm{Ric}}
\def\boxit#1{\vbox{\hrule\hbox{\vrule\kern3pt
     \vbox{\kern3pt#1\kern3pt}\kern3pt\vrule}\hrule}}
\begin{document}
\title[Lorentzian metrics]
{Left invariant Lorentzian metrics and curvatures on non-unimodular Lie groups of dimension three}

\author[K.~Y.~Ha]{Ku~Yong~Ha}
\address{Department of mathematics, Sogang University, Seoul 04107, KOREA}
\email{kyha@sogang.ac.kr}

\author[J.~B.~Lee]{Jong~Bum~Lee}
\address{Department of Mathematics, Sogang University, Seoul 04107, KOREA}
\email{jlee@sogang.ac.kr}

\subjclass[2010]{Primary: 53C50; Secondary: 22E15}%
\keywords{Non-unimodular three dimensional Lie groups, left invariant Lorentzian metrics, Ricci operators}

\begin{abstract}
For each connected and simply connected three-dimensional non-unimodular Lie group,
we classify the left invariant Lorentzian metrics up to automorphism,
and study the extent to which curvature can be altered
by a change of metric. Thereby we obtain the Ricci operator, the scalar curvature, and the sectional
curvatures as functions of left invariant Lorentzian metrics on the three-dimensional non-unimodular Lie groups.

Our study is a continuation and extension of the previous studies done in \cite{HL2009_MN} for Riemannian metrics on three-dimensional Lie groups and in \cite{BC}
for Lorentzian metrics on three-dimensional unimodular Lie groups.
\end{abstract}

\maketitle

\section{Introduction}

Let $G$ be a connected and simply connected, three-dimensional Lie group.
The classification of all left invariant Riemannian metrics on $G$
up to automorphism of $G$ is completely carried out in \cite{HL2009_MN}.
For the Lorentzian case, a complete classification
is carried out in \cite{BC} when $G$ is unimodular.

We continue and extend the study in \cite{BC} to all non-unimodular Lie groups $G$.
So, our basic references are \cite{HL2009_MN} and \cite{BC}.
In this article, we will be concerned with two main problems:
\begin{enumerate}
\item 
to classify all the left invariant Lorentzian metrics on each connected,
simply connected three-dimensional non-unimodular Lie group
$G$ up to automorphism, and 
\item to study the extent to which curvature can be altered by a
change of left -invariant Lorentzian metric.
\end{enumerate}

There are uncountably many nonisomorphic, connected and simply connected three-dimensional
non-unimodular Lie groups. These are all
solvable and of the form $\bbr^2\rtimes_\varphi \bbr$ where $\bbr$
acts on $\bbr^2$ via a linear map $\varphi$, see Section~\ref{sec: Lie algebras}.
Let $G$ be such a Lie group.
Let $\frakM(G)$ be the space of left invariant Lorentzian metrics on $G$.
Then there is a natural action of $\aut(G)$ on the space $\frakM(G)$.
Our first goal is determine the moduli space $\aut(G)\bs\frakM(G)$.

Let $h\in \frakM(G)$ and let $\frakB$ be an orthonormal basis for the Lie algebra $\frakg$ of $G$
with respect to $h$.
By \cite{Milnor}, there is a unique linear transformation $L:\frakg\to\frakg$ satisfying
the formula $[u,v]=L(u\x v)$ for all $u,v\in\frakg$, and
$G$ is unimodular if and only if such a linear transformation $L$ is self adjoint,
i.e., $h(L(u),v)=h(u,L(v))$ for all $u,v\in\frakg$.
In this case, the matrix $[L]_{\frakB}$ is Lorentzian symmetric.
For each $h\in \frakM(G)$,
let $\frakL(\frakg)_h$ denote the space of all Lorentzian symmetric matrices $[L]_{\frakB}$.
And let $\frakL(\frakg)$ be the union of all $\frakL(\frakg)_h$.
The authors in \cite{BC} give a complete computation of the moduli space $\aut(G)\bs\frakM(G)$
using a bijection between $\aut(G)\bs\frakM(G)$ and $\frakL(\frakg)/O(2,1)$.
However, this procedure is possible only when $G$ is unimodular.
In this article, for this reason, we shall give a complete computation of the moduli space $\aut(G)\bs\frakM(G)$
in a direct way  since $G$ is non-unimodular.

In Section~\ref{Lorentzian metric}, we review some properties related to the left invariant Lorentzian metric on a connected Lie group.
In Section~\ref{sec: Lie algebras}, we recall the three-dimensional non-unimodular Lie algebras and their groups of automorphisms (\cite{HL2009_MN}).

In Section~\ref{curvature-Lorentzian},
we review Ricci operator and curvatures, sectional curvatures, and scalar curvatures of the left invariant Lorentzian metric on a connected Lie group. In particular, we find out the relationship between the Ricci curvature and the Ricci operator. Furthermore, we find an identity which is the Lorentzian version of the formula given by (\cite[p.~\!306]{Milnor}) about sectional curvature $\kappa(u,v)$ associated with $u$ and $v$.

In Sections~\ref{sec:G_I}, \ref{sec:c>1}, \ref{sec:c=1} and \ref{sec:c<1},
we classify all the left invariant Lorentzian metrics on the three-dimensional non-unimodular Lie group up to automorphism 
and we study the extent to which curvature can be altered by a change of left invariant Lorentzian metric.
In particular, we classify explicitly three-dimensional Lorentzian non-unimodular Lie groups whose metrics have constant section curvatures.

Tables~\ref{tab1}, \ref{tab2}, \ref{tab3}, \ref{tab4} and \ref{tab5} below summarize the main results of this article.
All the calculations were done using the program MATHEMATICA and hand-checked.

{\footnotesize {\scriptsize
\begin{sidewaystable} \vspace{7cm}
\caption{The case of $G_I$} \label{tab1}
\begin{center}
\begin{minipage}{\textwidth}
\begin{tabular}{|p{1.3in}|p{2.in}|p{1.in}|p{1.2in}|}\hline
Lie algebra &\multicolumn{3}{|c|}
{$\frakg_I  \cong  \bbr^2\rtimes_{\sigma_I}\bbr$, \, where \,
$\sigma_I(t) = \left[ \begin{matrix} t \!\!\!&\!\!\! 0 \\ 0
\!\!\!&\!\!\! {t} \end{matrix} \right]$}

\\ \hline
$\begin{array}{l}\hspace{-5pt}
\text{Associated connected,}\\\hspace{-5pt}
\text{simply connected}\\\hspace{-5pt}
\text{Lie group}
\end{array}$
&\multicolumn{3}{|c|}{$G_I \cong \bbr^2\rtimes_{\varphi_I}\bbr$, \, where \,
$\varphi_I(t)= \left[ \begin{matrix} e^t & 0 \\ 0 & {e^t} \end{matrix} \right]$}

\\ \hline
$\begin{array}{l}\hspace{-5pt}
\text{Automorphisms of}\\\hspace{-5pt}
\text{a Lie algebra}
\end{array}$
&\multicolumn{3}{|c|}{$\left\{\left[\begin{matrix} \mathrm{GL}(2,\bbr)& *\\ 0 & 1\end{matrix}\right]\bigg|\,\, *\in\bbr^2\right\}$}

\\ \hline\hline
$\begin{array}{l}\hspace{-5pt}
\text{Left invariant}\\\hspace{-5pt}
\text{Lorentzian metrics}
\end{array}$
&Ricci operators $[\Ric]_{\{y_i\}}$
&scalar curvature
&sectional curvatures
\\\hline
$\left[\begin{matrix} 1 & \hspace{8pt}0 & 0 \\ 0 & -1 & 0 \\ 0 & \hspace{8pt}0 & \mu \end{matrix}\right],
    \begin{array}{l} \mu > 0 \end{array} $
&$
-\left[\begin{matrix} \frac{2}{\mu} & 0 & 0 \\ 0 & \frac{2}{\mu} & 0 \\ 0 & 0 & \frac{2}{\mu} \end{matrix} \right]
\quad\text{O'Neill type $\{11,1\}$}$
&$\rho = -\frac{6}{\mu}$
&$\begin{array}{l}
\kappa(y_1,y_2)=-\frac{1}{\mu}\\
\kappa(y_2,y_3)=-\frac{1}{\mu}\\
\kappa(y_3,y_1)=-\frac{1}{\mu}
\end{array}$

\\ \hline
$\left[\begin{matrix} 1 & 0 & \hspace{8pt}0 \\ 0 & 1 & \hspace{8pt}0 \\ 0 & 0 & -\mu \end{matrix}\right],
    \begin{array}{l} \mu > 0 \end{array} $
&$\left[\begin{matrix} \frac{2}{\mu} & 0 & 0 \\ 0 & \frac{2}{\mu} & 0 \\ 0 & 0 & \frac{2}{\mu} \end{matrix} \right],
\quad\text{O'Neill type $\{11,1\}$}$
&$\rho = \frac{6}{\mu}$
&$\begin{array}{l}
\kappa(y_1,y_2)=\frac{1}{\mu}\\
\kappa(y_2,y_3)=\frac{1}{\mu}\\
\kappa(y_3,y_1)=\frac{1}{\mu}
\end{array}$

\\ \hline
$\left[\begin{matrix} 1 & 0 & 0 \\ 0 & 0 & 1 \\ 0 & 1 & 0 \end{matrix}\right]$
&$\left[\begin{matrix} 0 & 0 & 0 \\ 0 & 0 & 0 \\ 0 & 0 & 0 \end{matrix} \right]$
\quad(flat), \quad O'Neill type $\{11,1\}$
&$\rho = 0$
&$\begin{array}{l}
\kappa(y_1,y_2)=0\\
\kappa(y_2,y_3)=0\\
\kappa(y_3,y_1)=0
\end{array}$
\\ \hline
\end{tabular}
\end{minipage}
\end{center}
\end{sidewaystable}
}}

{\footnotesize {\scriptsize
\begin{sidewaystable} \vspace{7cm}

\caption{The case of $G_c$, $c>1$} \label{tab2}
\begin{center}
\begin{minipage}{\textwidth}
\begin{tabular}{|p{1.3in}|p{2.9in}|p{1.25in}|p{2in}|}\hline
Lie algebra & \multicolumn{3}{|c|}
{$\frakg_c \cong \bbr^2\rtimes_{\sigma_c}\bbr$, \, where \,
$\sigma_c(t) \!\!=\!\! \left[ \begin{matrix} 0 &\!\!\! -ct
\\ t & \!\!\!\hspace{8pt}{2t} \end{matrix} \right]$}

\\ \hline
$\begin{array}{l}\hspace{-5pt}
\text{Associated connected,}\\\hspace{-5pt}
\text{simply connected}\\\hspace{-5pt}
\text{Lie group}
\end{array}$
& \multicolumn{3}{|c|}
{$G_c  \cong \bbr^2\rtimes_{\varphi_c}\bbr$, \,
where \,
$\varphi_c(t)= e^t\frac{e^{zt}+e^{-zt}}{2}\left[ \begin{matrix} 1 \!\!&\!\! 0 \\ 0 \!\!&\!\! 1 \end{matrix} \right]
+e^t\frac{e^{zt}-e^{-zt}}{2z}\left[ \begin{matrix} -1 \!\!&\!\!\! -c \\ \hspace{8pt}1 \!\!&\!\!\! \hspace{8pt}1 \end{matrix} \right], \quad
  (w=\sqrt{1-c}\ne 0)$}

\\ \hline
$\begin{array}{l}\hspace{-5pt}
\text{Automorphisms of}\\\hspace{-5pt}
\text{a Lie algebra}
\end{array}$
& \multicolumn{3}{|c|}
{$\Bigg\{\Bigg[\begin{matrix} \beta\!\!-\!\!\alpha \!\!&\!\! -c\alpha \!\!&\!\! *\\
                               \alpha \!\!&\!\! \beta\!\!+\!\!\alpha \!\!&\!\! *  \\
                            0 \!\!&\!\! 0 \!\!&\!\! 1\end{matrix} \Bigg]\bigg|\,
  \begin{array}{l} \alpha,\beta, * \in\bbr, \\\beta^2 + (c-1)\alpha^2 \ne 0 \end{array} \!\!\Bigg\}$}

\\ \hline\hline
$\begin{array}{l}\hspace{-5pt}
\text{Left invariant}\\\hspace{-5pt}
\text{Lorentzian metrics}
\end{array}$
&Ricci operators $[\Ric]_{\{y_i\}}$
&scalar curvature
&sectional curvatures
\\\hline
$\left[\begin{matrix} \mu & 0 & 0\\ 0 & 0 & 1\\ 0 & 1 & 0\end{matrix}\right],
    \hspace{-.1cm} \begin{array}{l} \mu > 0 \end{array} $
&$\left[\begin{matrix} -\frac{c^2\mu}{2} & 0 & \hspace{8pt}0 \\ \hspace{8pt}0 & \frac{c(c\mu+1)}{2} & -\frac{c}{2} \\ \hspace{8pt}0 & \frac{c}{2} & \frac{c(c\mu-1)}{2} \end{matrix} \right], \quad
\text{O'Neill type $\{21\}$}$
&$\rho = \frac{c^2\mu}{2}$
&$\begin{array}{l}
\kappa(y_1,y_2)=-\frac{c(c\mu-2)}{4}\\
\kappa(y_2,y_3)=\frac{3c^2\mu}{4}\\
\kappa(y_3,y_1)=-\frac{c(c\mu+2)}{4}
\end{array}$

\\ \hline
$\left[\begin{matrix} 1 & 1 & 0\\ 1 & \tau & 0\\ 0 & 0 & \mu\end{matrix}\right],
    \hspace{-.1cm} \begin{array}{l} \mu > 0  \\ \tau <1 \end{array} $
&\hspace{-5pt}$\begin{array}{l}
\left[\begin{matrix} \frac{\tau^2+(4-2c)\tau+(c^2-4)}{2(1-\tau)\mu} & 0 & 0 \\
0 &\hspace{-15pt} \frac{\tau^2+2\tau-(c^2-2c+4)}{2(1-\tau)\mu} & -\frac{c-\tau}{\mu\sqrt{1-\tau}} \\
0 & \frac{c-\tau}{\mu\sqrt{1-\tau}} &\hspace{-15pt}
  -\frac{\tau^2-6\tau-(c^2-2c-4)}{2(1-\tau)\mu} \end{matrix} \right]\\
\text{O'Neill type $\{11,1\}$, $\{1z\bar{z}\}$, $\{21\}$} \\ \qquad
\text{when $(c+\tau)^2-4c >0, <0, =0$,  resp.}\end{array}
$
&$\rho = \frac{\tau^2-2(c-6)\tau+(c^2-12)}{2(1-\tau)\mu}$
&$\begin{array}{l}
\kappa(y_1,y_2)=\frac{3\tau^2-2c\tau-(c^2-4c+4)}{4(1-\tau)\mu}\\
\kappa(y_2,y_3)=-\frac{\tau^2-2(c+2)\tau+(c^2+4)}{4(1-\tau)\mu}\\
\kappa(y_3,y_1)=-\frac{\tau^2+2(c-4)\tau-(3c^2-4c-4)}{(1-\tau)\mu}
\end{array}$

\\ \hline
$\left[\begin{matrix} 1 & 1 & \hspace{8pt}0\\ 1 & \nu & \hspace{8pt}0\\ 0 & 0 & -\mu\end{matrix}\right],
    \hspace{-.1cm} \begin{array}{l} \mu >0 \\ 1<\nu\le c\end{array} $
&\hspace{-5pt}$\begin{array}{l}
\left[\begin{matrix} -\frac{\nu^2+2\nu-(c^2-2c+4)}{2(\nu-1)\mu} & \frac{c-\nu}{\mu\sqrt{\nu-1}} & 0 \\\frac{c-\nu}{\mu\sqrt{\nu-1}} &\hspace{-18pt} -\frac{\nu^2-6\nu-(c^2-2c-4)}{2(\nu-1)\mu} & 0 \\
0 & 0 &\hspace{-18pt} \frac{(\nu-c)^2+4(\nu-1)}{2(\nu-1)\mu} \end{matrix} \right] \\
\text{O'Neill type $\{11,1\}$}
\end{array}$
&$\rho = \frac{(\nu-c)^2+12(\nu-1)}{2(\nu-1)\mu}$
&$\begin{array}{l}
\kappa(y_1,y_2)=-\frac{\nu^2-(2c+4)\nu+(c^2+4)}{4(\nu-1)\mu}\\
\kappa(y_2,y_3)=-\frac{\nu^2+2(c-4)\nu-(3c^2-4c-4)}{4(\nu-1)\mu}\\
\kappa(y_3,y_1)=\frac{3\nu^2-2c\nu-(c^2-4c+4)}{4(\nu-1)\mu}
\end{array}$

\\ \hline
\end{tabular}
\end{minipage}
\end{center}
\end{sidewaystable}
}}

{\footnotesize {\scriptsize
\begin{sidewaystable} \vspace{10cm}

\caption{The case of $G_1$} \label{tab3}
\begin{center}%
\begin{minipage}{\textwidth}\hspace{-3cm}
\begin{tabular}{|p{1.5in}|p{3.5in}|p{1.in}|p{1.5in}|}\hline
Lie algebra & \multicolumn{3}{|c|}
{$\frakg_1 \cong \bbr^2\rtimes_{\sigma_1}\bbr$, \, where \,
$\sigma_1(t) \!\!=\!\! \left[ \begin{matrix} 0 &\!\!\! -t
\\ t &\!\! {2t} \end{matrix} \right]$}

\\ \hline
$\begin{array}{l}\hspace{-5pt}
\text{Associated connected,}\\\hspace{-5pt}
\text{simply connected}\\\hspace{-5pt}
\text{Lie group}
\end{array}$
& \multicolumn{3}{|c|}
{$\begin{array}{c} G_1  \cong \bbr^2\rtimes_{\varphi_1}\bbr, \\
\quad \text{where \,}
\varphi_1(t)= e^t\left[ \begin{matrix} 1 \!\!&\!\! 0 \\ 0 \!\!&\!\! {1} \end{matrix} \right]+e^tt\left[\begin{matrix} -1\!\!\! &\!\!\! -1 \\ 1\!\!\! &\!\!\! 1\end{matrix} \right]\end{array}$}

\\ \hline
$\begin{array}{l}\hspace{-5pt}
\text{Automorphisms of}\\\hspace{-5pt}
\text{a Lie algebra}
\end{array}$
&\multicolumn{3}{|c|}
{$\Bigg\{\Bigg[\begin{matrix} \beta\!\!-\!\!\alpha \!\!&\!\! -c\alpha \!\!&\!\! *\\
                               \alpha \!\!&\!\! \beta\!\!+\!\!\alpha \!\!&\!\! *  \\
                            0 \!\!&\!\! 0 \!\!&\!\! 1\end{matrix} \Bigg]\bigg|\,
  \begin{array}{l} \alpha,\beta, * \in\bbr, \\\beta^2 + (c-1)\alpha^2 \ne 0 \end{array} \!\!\Bigg\}$}

\\ \hline\hline
$\begin{array}{l}\hspace{-5pt}
\text{Left invariant}\\\hspace{-5pt}
\text{Lorentzian metrics}
\end{array}$
&Ricci operators $[\Ric]_{\{y_i\}}$
&scalar curvature
&sectional curvatures

\\ \hline
$Q^{-1}\left[\begin{matrix} 0 & 0 & 1\\ 0 & \mu & 0\\ 1 & 0 & 0\end{matrix}\right]Q,
    \hspace{-.1cm} \begin{array}{l} \mu > 0 \end{array} $
&$
\diag\{0,0,0\} \quad(flat), \quad\text{O'Neill type  $\{11,1\}$}$
&$\rho = 0$
&$\begin{array}{l}
\kappa(y_1,y_2)=0 \\
\kappa(y_2,y_3)=0 \\
\kappa(y_3,y_1)=0
\end{array}$

\\ \hline
$Q^{-1}\left[\begin{matrix} \mu & 0 & 0\\ 0 & 0 & 1\\ 0 & 1 & 0 \end{matrix}\right]Q,
    \hspace{-.1cm} \begin{array}{l} \mu > 0 \end{array} $
&$\left[\begin{matrix} -\frac{\mu}{2} & \sqrt{\frac{\mu}{2}} & -\sqrt{\frac{\mu}{2}} \\
\sqrt{\frac{\mu}{2}} & \frac{\mu}{2} & 0 \\
\sqrt{\frac{\mu}{2}} & 0 & \frac{\mu}{2}  \end{matrix} \right], \quad\text{O'Neill type  $\{21\}$}$
&$\rho = \frac{\mu}{2}$
&$\begin{array}{l}
\kappa(y_1,y_2)=-\frac{\mu}{4} \\
\kappa(y_2,y_3)=\frac{3\mu}{4} \\
\kappa(y_3,y_1)=-\frac{\mu}{4}
\end{array}$

\\ \hline
$Q^{-1}\left[\begin{matrix} 1 & \hspace{8pt}0 & 0\\ 0 & -\nu & 0\\ 0 & \hspace{8pt}0 & \mu \end{matrix}\right]Q,
    \hspace{-.1cm} \begin{array}{l} \mu >0 \\ \nu>0 \end{array} $
&$
\left[\begin{matrix} -\frac{4\nu+1}{2\mu\nu} & 0 & -\frac{1}{\mu\sqrt{\nu}} \\
0 & \frac{1-4\nu}{2\mu\nu} & 0 \\
\hspace{8pt}\frac{1}{\mu\sqrt{\nu}} & 0 & \hspace{8pt}\frac{1-4\nu}{2\mu\nu} \end{matrix} \right],
\begin{array}{l}
\text{O'Neill type  $\{11,1\}$, $\{1z\bar{z}\}$, $\{21\}$}\\
\text{when $1-4\nu >0, <0, =0$, resp.}
\end{array}$
&$\rho = \frac{1-12\nu}{2\mu\nu}$
&$\begin{array}{l}
\kappa(y_1,y_2)=-\frac{1+4\nu}{4\mu\nu} \\
\kappa(y_2,y_3)= \frac{3-4\nu}{4\mu\nu}\\
\kappa(y_3,y_1)=-\frac{1+4\nu}{4\mu\nu}
\end{array}$

\\ \hline
$Q^{-1}\left[\begin{matrix} 1 & 0 & \hspace{8pt}0\\ 0 & \nu & \hspace{8pt}0\\ 0 & 0 & -\mu \end{matrix}\right]Q,
    \hspace{-.1cm} \begin{array}{l} \mu > 0  \\ \nu>0 \end{array} $
&$\left[\begin{matrix} \frac{4\nu-1}{2\mu\nu} & \frac{1}{\mu\sqrt{\nu}}& 0 \\
\frac{1}{\mu\sqrt{\nu}} & \frac{4\mu+1}{2\mu\nu} & 0 \\
0 & 0 & \frac{4\mu+1}{2\mu\nu} \end{matrix} \right], \quad\text{O'Neill type  $\{11,1\}$}$
&$\rho =\frac{1+12\nu}{2\mu\nu}> 0 $
&$\begin{array}{l}
\kappa(y_1,y_2)=\frac{4\nu-1}{4\mu\nu} \\
\kappa(y_2,y_3)=\frac{4\nu+3}{4\mu\nu} \\
\kappa(y_3,y_1)=\frac{4\nu-1}{4\mu\nu}
\end{array}$

\\ \hline
$Q^{-1}\left[\begin{matrix} -1 & 0 & 0\\ \hspace{8pt}0 & \nu & 0\\ \hspace{8pt}0 & 0 & \mu \end{matrix}\right]Q,
    \hspace{-.1cm} \begin{array}{l} \mu >0 \\ \nu>0 \end{array} $
&$
\left[\begin{matrix} \hspace{8pt}\frac{1-4\nu}{2\mu\nu} & 0 & \frac{1}{\mu\sqrt{\nu}} \\
\hspace{8pt}0   & \frac{1-4\mu}{2\mu\nu} & 0 \\
-\frac{1}{\mu\sqrt{\nu}} & 0 & -\frac{4\mu+1}{2\mu\nu} \end{matrix} \right],
\begin{array}{l}
\text{O'Neill type  $\{11,1\}$, $\{1z\bar{z}\}$, $\{21\}$}\\
\text{when $1-4\nu >0, <0, =0$, resp.}
\end{array}$
&$\rho = \frac{1-12\nu}{2\mu\nu}$
&$\begin{array}{l}
\kappa(y_1,y_2)=\frac{3-4\nu}{4\mu\nu} \\
\kappa(y_2,y_3)=-\frac{1+4\nu}{4\mu\nu}\\
\kappa(y_3,y_1)=-\frac{1+4\nu}{4\mu\nu}
\end{array}$

\\ \hline
$Q^{-1}\left[\begin{matrix} 0 & 1 & 0\\ 1 & 0 & 0\\ 0 & 0 & \mu \end{matrix}\right]Q,
    \hspace{-.1cm} \begin{array}{l} \mu > 0 \end{array} $
&$\left[\begin{matrix} -\frac{2}{\mu} &\hspace{8pt} 0 & \hspace{8pt}0 \\
\hspace{8pt}0 & -\frac{3}{\mu} & \hspace{8pt}\frac{1}{\mu} \\
\hspace{8pt}0 & -\frac{1}{\mu} & -\frac{1}{\mu}  \end{matrix} \right], \quad\text{O'Neill type  $\{21\}$}
$
&$\rho = -\frac{6}{\mu}$
&$\begin{array}{l}
\kappa(y_1,y_2)=-\frac{2}{\mu} \\
\kappa(y_2,y_3)= -\frac{1}{\mu} \\
\kappa(y_3,y_1)=0
\end{array}$

\\ \hline
$Q^{-1}\left[\begin{matrix} \hspace{8pt}0 &\!\!\! -1 & 0\\ -1 &\!\!\! \hspace{8pt}0 & 0\\ \hspace{8pt}0 & \!\!\!\hspace{8pt}0 & \mu \end{matrix}\right]Q,
    \hspace{-.1cm} \begin{array}{l} \mu > 0 \end{array} $
&$\left[\begin{matrix} -\frac{2}{\mu} & \hspace{8pt}0 & \hspace{8pt}0 \\
\hspace{8pt}0 & -\frac{1}{\mu} & -\frac{1}{\mu} \\
\hspace{8pt}0 & \hspace{8pt}\frac{1}{\mu} & -\frac{3}{\mu}  \end{matrix} \right], \quad\text{O'Neill type $\{21\}$}$
&$\rho = -\frac{6}{\mu}$
&$\begin{array}{l}
\kappa(y_1,y_2)=0 \\
\kappa(y_2,y_3)= -\frac{1}{\mu} \\
\kappa(y_3,y_1)=-\frac{2}{\mu}
\end{array}$

\\ \hline
\end{tabular}
\par \quad\hspace{-3cm} Here {\footnotesize $Q=\left[\begin{matrix}-2&-1&0\\\hspace{8pt}1&\hspace{8pt}1&0\\ \hspace{8pt}0&\hspace{8pt}0&1\end{matrix}\right]$}.
\end{minipage}
\end{center}
\end{sidewaystable}
}}

{\footnotesize {\scriptsize
\begin{sidewaystable} \vspace{10cm}

\caption{The case of $G_c$, $c<1$, Part~I} \label{tab4}
\begin{center}
\begin{minipage}{\textwidth}
\begin{tabular}{|p{1.5in}|p{3.5in}|p{1.in}|p{1.5in}|}\hline
Lie algebra & \multicolumn{3}{|c|}
{$\frakg_c \cong \bbr^2\rtimes_{\sigma_c}\bbr$, \, where \,
$\sigma_c(t) \!\!=\!\! \left[ \begin{matrix} 0 &\!\!\! -ct
\\ t &\!\! {2t} \end{matrix} \right]$}

\\ \hline
$\begin{array}{l}\hspace{-5pt}
\text{Associated connected,}\\\hspace{-5pt}
\text{simply connected}\\\hspace{-5pt}
\text{Lie group}
\end{array}$
& \multicolumn{3}{|c|}
{$\begin{array}{c} G_c  \cong \bbr^2\rtimes_{\varphi_c}\bbr, \\
\text{where \,} \\
\varphi_c(t)= e^t\frac{e^{wt}+e^{-wt}}{2}\left[ \begin{matrix} 1 \!\!&\!\! 0 \\ 0 \!\!&\!\! 1 \end{matrix} \right]
+e^t\frac{e^{wt}-e^{-wt}}{2w}\left[ \begin{matrix} -1 \!\!&\!\! -c \\ 1 \!\!&\!\! 1 \end{matrix} \right]
\qquad  (w=\sqrt{1-c}> 0)\end{array}$}

\\ \hline
$\begin{array}{l}\hspace{-5pt}
\text{Automorphisms of}\\\hspace{-5pt}
\text{a Lie algebra}
\end{array}$
& \multicolumn{3}{|c|}
{$\Bigg\{\Bigg[\begin{matrix} \beta\!\!-\!\!\alpha \!\!&\!\! -c\alpha \!\!&\!\! *\\
                               \alpha \!\!&\!\! \beta\!\!+\!\!\alpha \!\!&\!\! *  \\
                            0 \!\!&\!\! 0 \!\!&\!\! 1\end{matrix} \Bigg]\bigg|\,
  \begin{array}{l} \alpha,\beta, * \in\bbr, \\\beta^2 + (c-1)\alpha^2 \ne 0 \end{array} \!\!\Bigg\}$}

\\ \hline\hline
$\begin{array}{l}\hspace{-5pt}
\text{Left invariant}\\\hspace{-5pt}
\text{Lorentzian metrics}
\end{array}$
&Ricci operators $[\Ric]_{\{y_i\}}$
&scalar curvature
&sectional curvatures

\\ \hline
$P^{-1}\left[\begin{matrix} 0 & 0 & 1\\ 0 & 1 & 0\\ 1 & 0 & 0 \end{matrix}\right]P$
&\hspace{-5pt}$\begin{array}{l}
\left[\begin{matrix} 0 & 0 & 0 \\ 0 & -w(w-1) & w(w-1) \\ 0 & -w(w-1) & w(w-1) \end{matrix} \right],
\end{array}
\begin{array}{l}
\text{O'Neill type $\{11,1\}, \{21\}$}\\
\text{when $w=1, w\ne1$, resp.}
\end{array}$
&$\rho = 0$
&$\begin{array}{l}
\kappa(y_1,y_2)=-w(w-1) \\
\kappa(y_2,y_3)=0 \\
\kappa(y_3,y_1)=w(w-1)
\end{array}$

\\ \hline
$P^{-1}\left[\begin{matrix} 1 & 0 & 0\\ 0 & 0 & 1\\ 0 & 1 & 0 \end{matrix}\right]P$
&$\left[\begin{matrix} 0 & 0 & 0 \\ 0 & -w(w+1) & w(w+1) \\
0 & -w(w+1) & w(w+1) \end{matrix} \right], \quad\text{O'Neill type $\{21\}$}$
&$\rho = 0$
&$\begin{array}{l}
\kappa(y_1,y_2)=-w(w+1) \\
\kappa(y_2,y_3)=0 \\
\kappa(y_3,y_1)=w(w+1)
\end{array}$

\\ \hline
$P^{-1}\left[\begin{matrix} 1 & 1 & 0\\ 1 & 1 & \mu\\ 0 & \mu & 0 \end{matrix}\right]P,
    \hspace{-.1cm} \begin{array}{l} \mu >0 \end{array} $
&$\left[\begin{matrix} -\frac{2w^2}{\mu^2} & \frac{w(1+w)}{\mu^2} & -\frac{w(1+w)}{\mu^2} \\
\frac{w(1+w)}{\mu^2} & \frac{w(3w-1)}{2\mu^2} & \frac{w(1+w)}{2\mu^2} \\
\frac{w(1+w)}{\mu^2} & -\frac{w(1+w)}{2\mu^2} & \frac{z(1+5w)}{2\mu^2}\end{matrix} \right], \quad\text{O'Neill type $\{21\}$}$
&$\rho = \frac{2w^2}{\mu^2}$
&$\begin{array}{l}
\kappa(y_1,y_2)=-\frac{w(1+3w)}{2\mu^2} \\
\kappa(y_2,y_3)=\frac{3w^2}{\mu^2} \\
\kappa(y_3,y_1)=\frac{w(1-w)}{2\mu^2}
\end{array}$

\\ \hline
$P^{-1}\left[\begin{matrix} 1 & 0 & \hspace{8pt}0\\ 0 & 1 & \hspace{8pt}0\\ 0 & 0 & -\mu \end{matrix}\right]P,
    \hspace{-.1cm} \begin{array}{l} \mu > 0 \end{array} $
&$\diag\{\frac{2(1+w)}{\mu}, \frac{2(1-w)}{\mu}, \frac{2(1+w^2)}{\mu}\},
\quad\text{O'Neill type $\{11,1\}$}$
&$\rho =\frac{2(3+w^2)}{\mu} $
&$\begin{array}{l}
\kappa(y_1,y_2)=\frac{1-w^2}{\mu} \\
\kappa(y_2,y_3)= \frac{(1-w)^2}{\mu}\\
\kappa(y_3,y_1)=\frac{(1+w)^2}{\mu}
\end{array}$

\\ \hline
$P^{-1}\left[\begin{matrix} 1 & \hspace{8pt}0 & 0\\ 0 & -1 & 0\\ 0 & \hspace{8pt}0 & \mu \end{matrix}\right]P,
    \hspace{-.1cm} \begin{array}{l} \mu >0 \end{array} $
&$\diag\{-\frac{2(1+w)}{\mu}, -\frac{2(1+w^2)}{\mu}, \frac{2(1-w)}{\mu}\},
\quad\text{O'Neill type $\{11,1\}$}$
&$\rho =-\frac{2(3+w^2)}{\mu} $
&$\begin{array}{l}
\kappa(y_1,y_2)=-\frac{(1+w)^2}{\mu}\\
\kappa(y_2,y_3)=-\frac{(1-w)^2}{\mu} \\
\kappa(y_3,y_1)=-\frac{1-w^2}{\mu}
\end{array}$

\\ \hline
$P^{-1}\left[\begin{matrix} -1 & 0 & 0\\ \hspace{8pt}0 & 1 & 0\\ \hspace{8pt}0 & 0 & \mu \end{matrix}\right]P,
    \hspace{-.1cm} \begin{array}{l} \mu > 0 \end{array} $
&$\diag\{\frac{2(w-1)}{\mu}, -\frac{2(1+w^2)}{\mu}, -\frac{2(1+w)}{\mu}\},
\quad\text{O'Neill type $\{11,1\}$}$
&$\rho = -\frac{2(3+w^2)}{\mu}$
&$\begin{array}{l}
\kappa(y_1,y_2)= -\frac{(1-w)^2}{\mu}\\
\kappa(y_2,y_3)= -\frac{(1+w)^2}{\mu}\\
\kappa(y_3,y_1)=-\frac{1-w^2}{\mu}
\end{array}$

\\ \hline
\end{tabular}
\end{minipage}
\end{center}
\end{sidewaystable}
}}

{\footnotesize {\scriptsize
\begin{sidewaystable} \vspace{10cm}

\caption{The case of $G_c$, $c<1$, Part~II} \label{tab5}
\begin{center}
\begin{minipage}{\textwidth}\hspace{-3cm}
\begin{tabular}{|p{1.5in}|p{2.8in}|p{1.in}|p{2.2in}|}\hline
$\begin{array}{l}\hspace{-5pt}
\text{Left invariant}\\\hspace{-5pt}
\text{Lorentzian metrics}
\end{array}$
&Ricci operators $[\Ric]_{\{y_i\}}$
&scalar curvature
&sectional curvatures

\\ \hline
$P^{-1}\left[\begin{matrix} 0 & 1 & 0\\ 1 & 0 & 0\\ 0 & 0 & \mu \end{matrix}\right]P,
    \hspace{-.1cm} \begin{array}{l} \mu > 0 \end{array} $
&$\diag\{-\frac{2}{\mu}, -\frac{2}{\mu}, -\frac{2}{\mu}\},
\quad\text{O'Neill type $\{11,1\}$}$
&$\rho = -\frac{6}{\mu}$
&$\begin{array}{l}
\kappa(y_1,y_2)= -\frac{1}{\mu}\\
\kappa(y_2,y_3)= -\frac{1}{\mu}\\
\kappa(y_3,y_1)= -\frac{1}{\mu}
\end{array}$

\\ \hline
$P^{-1}\left[\begin{matrix} 0 & 1 & 0\\ 1 & 1 & 0\\ 0 & 0 & \mu \end{matrix}\right]P,
    \hspace{-.1cm} \begin{array}{l} \mu > 0 \end{array} $
&\hspace{-5pt}$\begin{array}{l}
\left[\begin{matrix} -\frac{2}{\mu} & 0 & 0 \\
0 & -\frac{2(w^2-w+1)}{\mu} & \frac{2w(w-1)}{\mu} \\
0 & -\frac{2w(w-1)}{\mu} & \frac{2(w^2-w-1)}{\mu} \end{matrix} \right],\\
\quad\text{O'Neill type  $\{11,1\}, \{21\}$}\\
\quad\text{when $w=1, w\ne1$, resp.}
\end{array}$
&$\rho = -\frac{6}{\mu}$
&$\begin{array}{l}
\kappa(y_1,y_2)= -\frac{2w^2-2w+1}{\mu}\\
\kappa(y_2,y_3)= -\frac{1}{\mu}\\
\kappa(y_3,y_1)=\frac{2w^2-2w-1}{\mu}
\end{array}$

\\ \hline
$P^{-1}\left[\begin{matrix} 0 & \hspace{8pt}1 & 0\\ 1 & -1 & 0\\ 0 & \hspace{8pt}0 & \mu \end{matrix}\right]P,
    \hspace{-.1cm} \begin{array}{l} \mu > 0 \end{array} $
&\hspace{-5pt}$\begin{array}{l}
\left[\begin{matrix} -\frac{2}{\mu} & 0 & 0 \\
\hspace{8pt}0 & \frac{2(w^2-w-1)}{\mu} & \frac{2w(w-1)}{\mu} \\
\hspace{8pt}0 & -\frac{2w(w-1)}{\mu} & -\frac{2(w^2-w+1)}{\mu} \end{matrix} \right],\\
\quad\text{O'Neill type $\{11,1\}, \{21\}$}\\
\quad\text{when $w=1, w\ne1$, resp.}
\end{array}$
&$\rho = -\frac{6}{\mu}$
&$\begin{array}{l}
\kappa(y_1,y_2)= \frac{2w^2-2w-1}{\mu}\\
\kappa(y_2,y_3)= -\frac{1}{\mu}\\
\kappa(y_3,y_1)=-\frac{2w^2-2w+1}{\mu}
\end{array}$

\\ \hline
$P^{-1}\left[\begin{matrix} 1 & 1 & 0\\ 1 & \tau & 0\\ 0 & 0 & \nu \end{matrix}\right]P,
    \hspace{-.1cm} \begin{array}{l} \nu > 0 \\ \tau<1 \end{array} $
& \hspace{-5pt}$\begin{array}{l}
\left[\begin{matrix} -\frac{2(w^2+w+1-(1+w)\tau)}{\nu(1-\tau)} & 0 & -\frac{2w(1+w)}{\nu\sqrt{1-\tau}} \\
0 &\hspace{-15pt} \frac{2(w^2\tau+\tau-1)}{\nu(1-\tau)} & 0 \\
\frac{2w(1+w)}{\nu\sqrt{1-\tau}} & 0 &\hspace{-15pt} \frac{2(w^2+w-1+(1-w)\tau)}{\nu\sqrt{1-\tau}} \end{matrix} \right],\\
\quad\text{O'Neill type $\{11,1\}$, $\{1z\bar{z}\}$, $\{21\}$ } \\
\quad \text{ when $\tau(\tau-1+w^2) >0, <0, =0$, resp.}
\end{array}$
&$\rho = \frac{2(w^2\tau+3\tau-3)}{\nu(1-\tau)}$
&$\begin{array}{l}
\kappa(y_1,y_2)= \frac{(w^2+2w+1)\tau-(2w^2+2w+1)}{\nu(1-\tau)}\\
\kappa(y_2,y_3)= \frac{(w^2-2w+1)\tau+(2w^2+2w-1)}{\nu(1-\tau)}\\
\kappa(y_3,y_1)=-\frac{w^2\tau-\tau+1}{\nu(1-\tau)}
\end{array}$

\\ \hline
$P^{-1}\left[\begin{matrix} 1 & 1 & 0\\ 1 & \tau & 0\\ 0 & 0 & \nu \end{matrix}\right]P,
    \hspace{-.1cm} \begin{array}{l} \nu > 0 \\ \tau>1 \end{array} $
&\hspace{-5pt}$\begin{array}{l}
\left[\begin{matrix} \frac{2(w^2+w+1-(1+w)\tau)}{\nu(\tau-1)} & -\frac{2w(1+w)}{\nu\sqrt{\tau-1}} & 0 \\
-\frac{2w(1+w)}{\nu\sqrt{\tau-1}} &\hspace{-18pt} -\frac{2(w^2+w-1+(1-w)\tau)}{\nu(\tau-1)} & 0 \\
0 & 0 &\hspace{-18pt} -\frac{2(w^2\tau+\tau-1)}{\nu(\tau-1)}\end{matrix} \right],\\
\quad\text{O'Neill type $\{11,1\}$} \end{array}$
&$\rho = -\frac{2(w^2\tau+3\tau-3)}{\nu(\tau-1)}$
&$\begin{array}{l}
\kappa(y_1,y_2)= \frac{(w^2\tau-\tau+1)}{\nu(\tau-1)}\\
\kappa(y_2,y_3)= -\frac{(w^2-2w+1)\tau+(2w^2+2w-1)}{\nu(\tau-1)}\\
\kappa(y_3,y_1)=-\frac{(w^2+2w+1)\tau-(2w^2+2w+1)}{\nu(\tau-1)}
\end{array}$

\\ \hline
$P^{-1}\left[\begin{matrix} -1 &\!\!\! \hspace{8pt}1 &\!\!\! 0\\ \hspace{8pt}1 &\!\!\! -\eta &\!\!\! 0\\ \hspace{8pt}0 &\!\!\! \hspace{8pt}0 &\!\!\! \mu \end{matrix}\right]P,
    \begin{array}{l} \mu > 0 \\ \eta<1 \end{array}$
&\hspace{-5pt}$\begin{array}{l}
\left[\begin{matrix} \frac{2(w^2\eta+\eta-1)}{\mu(1-\eta)} & 0 & 0 \\
0 &\hspace{-15pt} \frac{2(w^2+w-1+\eta-w\eta)}{\mu(1-\eta)} & \frac{2w(1+w)}{\mu\sqrt{1-\eta}} \\
0 & -\frac{2w(1+w)}{\mu\sqrt{1-\eta}} &\hspace{-15pt} -\frac{2(w^2+w+1 - (1+w)\eta)}{\mu(1-\eta)} \end{matrix} \right], \\
\quad\text{O'Neill type $\{11,1\}$, $\{1z\bar{z}\}$, $\{21\}$}\\
\quad \text{when $\eta(\eta-1+w^2) >0, <0, =0$, resp.}
\end{array}$
&$\rho = \frac{2(w^2\eta+3\eta-3)}{\mu(1-\eta)}$
&$\begin{array}{l}
\kappa(y_1,y_2)=\frac{(w^2-2w+1)\eta+(2w^2+2w-1)}{\mu(1-\eta)} \\
\kappa(y_2,y_3)=-\frac{1-\eta+w^2\eta}{\mu(1-\eta)} \\
\kappa(y_3,y_1)=\frac{(w^2+2w+1)\eta-(2w^2+2w+1)}{\mu(1-\eta)}
\end{array}$

\\ \hline
\end{tabular}
\par \quad\hspace{-3cm} Here {\footnotesize $P=\left[\begin{matrix}\frac{1}{2z}&\frac{1+z}{2z}&0\\-\frac{1}{2z}&\frac{z-1}{2z}&0\\ 0&0&1\end{matrix}\right]$}.
\end{minipage}
\end{center}
\end{sidewaystable}
}}

\section{Preliminaries}
\subsection{Left invariant Lorentzian metrics on Lie groups}\label{Lorentzian metric}

Let $h$ be a Lorentzian metric on a connected Lie group $G$,
and let $\theta: G \to G$ be a diffeomorphism on $G$. Then $\theta$
induces a Lorentzian metric on $G$ by the rule $h_\theta(x,y)=h(\theta_*^{-1}(x), \theta_*^{-1}(y))$,
where $\theta_*$ is the differential of $\theta$.
Even though $h$ is left invariant,
the induced metric $h_\theta$ is not necessarily left invariant.

Now we describe all the left invariant Lorentzian metrics
on a connected Lie group of dimension $n$.
Fix a basis $\frakB=\{x_1, x_2, \cdots, x_n\}$ for the Lie algebra $\frakg$ of $G$
and let $\{\omega_1, \omega_2,\cdots, \omega_n\}$ be its dual basis.
Then every left invariant Lorentzian metric $h$ on $G$ is of the form
$$
h = \sum h_{ij}~\omega_i\otimes \omega_j.
$$
This yields a symmetric, nondegenerate matrix $[h]=[h]_\frakB=[h_{ij}]$
satisfying
$$
h(x,y)=[x]^t[h][y]
$$
where $[x]=[x]_\frakB$ is the column vector $[a_1\,a_2 \cdots a_n]^t$ if and
only if $x=a_1 x_1+a_2 x_2+\cdots +a_n x_n$.

Let $\varphi\in\aut(\frakg)$ and let $h$ be a left invariant Lorentzian
metric on $G$. Define $h_\varphi$ by $h_\varphi(x,y) =
h(\varphi^{-1}(x), \varphi^{-1}(y))$ for all $x,y\in\frakg$. Then
$h_\varphi$ is a left invariant Lorentzian metric on $G$, because
\begin{align*}
h_\varphi(x,y)
&= h(\varphi^{-1}(x), \varphi^{-1}(y)) \\
&= h(\varphi^{-1}(\ell_p^{-1})_*(x), \varphi^{-1}(\ell_p^{-1})_*(y))\\
&= (\ell_p)^*h_\varphi(x,y).
\end{align*}
Further $[h_\varphi]=[\varphi^{-1}]^t[h][\varphi^{-1}]$.
Therefore $\aut(\frakg)$ acts on the space $\frakM(G)$ of all left invariant metrics on
$G$ by the rule $(\varphi,h)\mapsto h_\varphi$ or equivalently
$([\varphi],[h])\mapsto [h_\varphi]$.
If $G$ is simply connected, it is well known that
there is an isomorphism $\aut(G)\to\aut(\frakg)$ by $\theta\mapsto \theta_*$.
In this case, thus two actions of $\aut(G)$ and $\aut(\frakg)$ on $\frakM(G)$ are the same.
That is, we have a commutative diagram
$$
\CD
\aut(G) @.\,\,\,\, \times \,\,@. \frakM(G) @>>> \frakM(G),
@. \quad (\theta,h)@.\mapsto @.h_\theta \\
@VVV @. @VV{}V @VV{}V \quad @VVV @. @VV{=}V\\
\aut(\frakg) @. \,\,\,\,\times\,\,\,\, @. \frakM(G) @>>> \frakM(G),
@.\quad\, (\theta_*,h)@.\mapsto @. h_{\theta_*}
\endCD
$$

A left invariant Lorentzian metric $h'$ on $G$ is
{\it equivalent up to automorphism} to a left invariant Lorentzian metric $h$
if there exists $\varphi\in\aut(\frakg)$ such that
$[h']=[\varphi]^t[h][\varphi]$.
In this case we write $h'\sim h$ or $[h']\sim [h]$,
and we often say that $[h']$ is equivalent up to automorphism to $[h]$
or $[h']$ is {\it cogredient} to $[h]$ by $[\varphi]$.

Now we shall confine ourselves to the case of dimension three.
Let $h$ be a left invariant Lorentzian metric on $G$.
Then it induces a Lorentzian inner product on the vector space $\frakg$.
We denote this inner product by $h$.
We choose an orthonormal basis $\frakB_0$ on $\frakg$ for $h$.
Then
$$
[h]_{\frakB_0}=J_{2,1}
=\left[\begin{matrix}1&0&\hspace{8pt}0\\0&1&\hspace{8pt}0\\0&0&-1\end{matrix}\right].
$$
For any basis $\frakB$ of $\frakg$,
if $P$ is the transition matrix from the basis $\frakB$ to the basis $\frakB_0$,
then
$$
[h]_\frakB=P^t\, [h]_{\frakB_0}P.
$$
This implies that
\begin{itemize}
\item $\det([h]_\frakB)<0$ and
\item $[h]_\frakB$ cannot be a diagonal matrix with negative diagonals.
\end{itemize}

\subsection{The three-dimensional non-unimodular Lie algebras}\label{sec: Lie algebras}

There are uncountably many nonisomorphic three-dimensional non-unimodular, solvable Lie
algebras and a basis may be chosen so that
\begin{enumerate}
\item[{(a)}] $[x,y]=0,\,\, [z,x]=x,\,\, [z,y]=y$, or %
\end{enumerate}
\begin{enumerate}
\item[{(b)}] $[x,y]=0,\,\, [z,x]=y,\,\, [z,y]=-cx+2y$
\end{enumerate}
where $c\in\bbr$. Note that
$\ad(z)=\left[\begin{matrix}0&-c\\1&\hspace{8pt}2\end{matrix}\right]$ has
trace $2$ and determinant $c$. A reference is \cite{HL2009_MN}.
Such a Lie algebra is isomorphic to either
$\frakg_I$ or $\frakg_c$ for some $c\in\bbr$ where
\begin{align*}
\frakg_I & \cong \bbr^2\rtimes_{\sigma_{_I}}\bbr, \text{ where $\sigma_{_I}(t)= \left[ \begin{matrix} t & 0 \\
0 & {t} \end{matrix} \right]$;}\\
\frakg_{c} & \cong  \bbr^2\rtimes_{\sigma_{_c}}\bbr,
\text{ where $\sigma_{_c}(t)= \left[ \begin{matrix} 0 & -ct \\
t & \hspace{8pt}{2t} \end{matrix} \right]$}
\end{align*}
with a ``natural" basis
$$
x=\left(\left[ \begin{matrix} 1 \\ 0 \end{matrix}\right],0\right), \,
y=\left(\left[ \begin{matrix} 0 \\ 1 \end{matrix} \right],0\right),\,
z=\left(\left[ \begin{matrix} 0 \\ 0 \end{matrix} \right],1\right).
$$
Then they satisfy Lie bracket conditions (a) or (b), respectively.


For any non-unimodular solvable Lie algebra $\frakg$, its group of automorphisms is given as follows:
\begin{itemize}
    \item [(1)] The Lie group $\aut(\frakg_I)$ is isomorphic to
\begin{align*}
\left\{\left[
\begin{matrix} \mathrm{GL}(2,\bbr)& *\\ 0 & 1\end{matrix}\right]
\bigg|\,\, *\in\bbr^2\right\}.
\end{align*}
    \item [(2)] For each $c\in\bbr$, the Lie group $\aut(\frakg_c)$ is isomorphic to
\begin{align*}
\left\{\left[
\begin{matrix} \beta-\alpha & -c\alpha & *\\
\alpha & \beta+\alpha & *\\ 0 & 0 & 1\end{matrix}\right]
\bigg|
\begin{array}{l}
\alpha,\beta,*\in\bbr,\\\beta^2 + (c-1)\alpha^2 \ne 0
\end{array}\right\}.
\end{align*}
\end{itemize}

The three-dimensional non-unimodular Lie algebra $\frakg_I$ or
$\frakg_c$ is the Lie algebra of the connected and
simply connected three-dimensional Lie group
\begin{align*}
G_I & \cong \bbr^2\rtimes_{\varphi_{_I}}\bbr, \text{ where $\varphi_{_I}(t)= \left[ \begin{matrix} e^t & 0 \\
0 & {e^t} \end{matrix} \right]$, or}\\
G_c & \cong  \bbr^2\rtimes_{\varphi_{_c}}\bbr, \text{ where}
\end{align*}
{\footnotesize
$$
\varphi_{_c}(t) = \begin{cases} e^t\frac{e^{wt}+e^{-wt}}{2}\left[
\begin{matrix} 1& 0 \\ 0 & 1
\end{matrix} \right] +e^t\frac{e^{wt}-e^{-wt}}{2w}\left[
\begin{matrix} -1 & -c \\ 1& 1
\end{matrix} \right] & \text{if $w=\sqrt{1-c}\ne 0$}, \\
e^t\left[
\begin{matrix} 1& 0 \\ 0 & 1
\end{matrix} \right] +e^t t\left[
\begin{matrix} -1 & -1 \\ 1& 1
\end{matrix} \right]& \text{if $c = 1$}.
\end{cases}
$$
}

\section{Left invariant Lorentzian metrics and curvatures}\label{curvature-Lorentzian}

Let $G=G_I$ or $G_c$ and let 
$h\in\frakM(G)$ 
of signature $(+,+,-)$.
In what follows, we shall classify those $h$ up to automorphism and then
we shall compute the following associated curvatures
\begin{enumerate}
\item Ricci operator and curvature,
\item sectional curvature,
\item scalar curvature.
\end{enumerate}
Let $\varphi\in\aut(\frakg)$.
Let $h'=h_\varphi$ and let $\nabla'$ be the Levi-Civita connection associated to $(G,h')$.
Then for all $u,v,w\in\frakg$, we have
\begin{align*}
2h'(\nabla'_{\varphi(u)}\varphi(v),\varphi(w))
&=h'([\varphi(u),\varphi(v)],\varphi(w))+h'([\varphi(w),\varphi(u)],\varphi(v))\\
&\hspace{1cm}+h'([\varphi(w),\varphi(v)],\varphi(u))\\
&=h'(\varphi([u,v]),\varphi(w))+h'(\varphi([w,u]),\varphi(v))\\
&\hspace{1cm}+h'(\varphi([w,v]),\varphi(u))\\
&=h([u,v],w)+h([w,u],v)+h([w,v],u)\\
&=2h(\nabla_uv,w)\\
&=2h'(\varphi(\nabla_uv),\varphi(w))
\end{align*}
This reduces to $\nabla'_{\varphi(u)}\varphi(v)=\varphi(\nabla_uv)$, or the following diagram is commutative
$$
\CD
\frakg\x\frakg@>{\nabla}>>\frakg\\
@V{\varphi\x\varphi}VV@VV{\varphi}V\\
\frakg\x\frakg@>{\nabla'}>>\frakg
\endCD
$$
Therefore, the classification of the left invariant Lorentzian metrics up to automorphism leads to
the study of the left invariant Lorentzian metrics which leave all the curvature properties invariant.

We will begin with some necessary definitions.
Let $\nabla:\frakg\x\frakg\to\frakg$ be the Levi-Civita connection associated to $(G,h)$.
This is characterized by Koszul formula
$$
2h(\nabla_uv,w)=h([u,v],w)+h([w,u],v)+h([w,v],u).
$$
The \emph{Riemann curvature tensor} of $(G,h)$ is defined associated to each $u,v\in\frakg$ to be the linear transformation
$$
R_{uv}=\nabla_{[u,v]}-[\nabla_u,\nabla_v].
$$
The \emph{Ricci curvature} $\ric$ is the {symmetric} tensor defined, for any $u,v\in\frakg$, as the trace of the linear transformation $w\longmapsto R_{uw}v$.
Notice that The Riemann curvature tensor is completely determined by the Ricci curvature $\ric$.
The \emph{Ricci operator} $\Ric:\frakg\to\frakg$ is given the relation $h(\Ric(u),v)=\ric(u,v)$.
Because of the symmetries of the Ricci curvature, the Ricci operator $\Ric$ is $h$-self adjoint.
The \emph{scalar curvature} $\rho$ is the trace of the Ricci operator.
If $u,v\in\frakg$ are linearly independent, the number
$$
\kappa(u,v)=\frac{h(R_{uv}u,v)}{h(u,u)h(v,v)-h(u,v)^2}
$$
is called the \emph{sectional curvature} associated with $u,v$.

For an explicit computation, we fix an orthonormal basis $\frakB=\{y_i\}$ for $\frakg$ with respect to $h$.
So, $h(y_1,y_1)=h(y_2,y_2)=1, h(y_3,y_3)=-1$ and $h(y_i,y_j)=0$ for $i\ne j$.
From definitions, we immediately obtain
\begin{align*}
\kappa(y_1,y_2)&=\hspace{8pt}h(R_{y_1y_2}y_1,y_2),\\
\kappa(y_1,y_3)&=-h(R_{y_1y_3}y_1,y_3),\\
\kappa(y_2,y_3)&=-h(R_{y_2y_3}y_2,y_3),
\end{align*}
and
\begin{align*}
\ric(u,v)
&= h(R_{uy_1}v,y_1)+h(R_{uy_2}v,y_2)-h(R_{uy_3}v,y_3).
\end{align*}

\begin{Rmk}
We recall that the Ricci curvature $r(u)$ and the Ricci transformation $\hat{r}:\frakg\to\frakg$
are given in the Riemannian case as follows
\begin{align*}
r(u)&= g(R_{uy_1}u,y_1)+g(R_{uy_2}u,y_2)+g(R_{uy_3}u,y_3),\\
&=g\left(\sum_i R_{y_iu}y_i,u\right),\\
\hat{r}(u)&=\sum_i R_{y_iu}y_i.
\end{align*}
From the definition $h(\Ric(u),v)=\ric(u,v)$, we remark that
$\ric(u,u)$ and $\Ric$ are the Lorentzian versions of $r(u)$ and $\hat{r}$, respectively.
\end{Rmk}

Let
\begin{align*}
\Big[\Ric\Big]_{\{y_i\}}
= \Big[R_{ij}\Big].
\end{align*}
Then
\begin{align*}
R_{ij}&=\hspace{8.5pt}h(\Ric(y_j),y_i)=\hspace{8.5pt}\ric(y_j,y_i)=\hspace{8.5pt}\ric(y_i,y_j)=\hspace{8.5pt}R_{ji}\text{ for $i,j=1,2$},\\
R_{3j}&=-h(\Ric(y_j),y_3)=-\ric(y_j,y_3)=-\ric(y_3,y_j) = -R_{j3} \text{ for $j=1,2$}.
\end{align*}
This means that the matrix of the $h$-self adjoint operator $\Ric$
$$
\Big[\Ric\Big]=\left[\begin{matrix}\hspace{8pt}R_{11}&\hspace{8pt}R_{12}&R_{13}\\
\hspace{8pt}R_{12}&\hspace{8pt}R_{22}&R_{23}\\-R_{13}&-R_{23}&R_{33}
\end{matrix}\right]
$$
is Lorentzian symmetric.

\begin{Prop}[{\cite[Ex.~\!19 (p.~\!261)]{ONeill}}]
Let $V$ be a real vector space with Lorentzian inner product $h$ of signature $(2,1)$.
Every $h$-self adjoint operator $T$ on $V$
has a matrix of exactly one of the following four types:

\noindent
Relative to an $h$-orthonormal basis,
$$
\left[\begin{matrix}a&0&0\\0&b&0\\0&0&c\end{matrix}\right],\quad\text{ or }\quad
\left[\begin{matrix}a&\hspace{8pt}0&0\\0&\hspace{8pt}\alpha&\beta\\0&-\beta&\alpha\end{matrix}\right](\beta\ne0);
$$
\noindent
Relative to a basis $\{e,u,v\}$ with non-trivial products $h(u,v)=1=h(e,e)$,
$$
\left[\begin{matrix}a&0&0\\0&b&\epsilon\\0&0&b\end{matrix}\right] (\epsilon=\pm1),\quad\text{ or }\quad
\left[\begin{matrix}a&0&1\\1&a&0\\0&0&a\end{matrix}\right].
$$
\end{Prop}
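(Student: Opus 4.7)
The plan is to prove the proposition by classifying $T$ according to its real Jordan normal form and then refining each case using the metric structure. The single driving observation is that self-adjointness of $T$ with respect to $h$ forces $h$-orthogonality between (generalized) eigenspaces belonging to distinct eigenvalues: if $(T-\lambda I)^k u = 0$ and $(T-\mu I)^\ell v = 0$ with $\lambda\neq \mu$, then $h(u,v)=0$ follows by an induction using $h(Tu,v) = h(u,Tv)$. Combined with non-degeneracy of $h$, this gives a strong constraint on which Jordan types can appear, because a totally isotropic subspace orthogonal to an entire complementary subspace would force $h$ to be degenerate.

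Since $T$ is a real $3\times 3$ operator, its eigenvalues split as either (A) three real eigenvalues (with various multiplicities and Jordan blocks), or (B) one real and a conjugate pair of complex eigenvalues. I would dispose of case (B) first. The real eigenline $V_a$ and the invariant real $2$-plane $W$ corresponding to the complex pair $\alpha\pm i\beta$ are $h$-orthogonal, and $V_a$ cannot be isotropic (else it would be in the radical of $h$), so $h|_W$ is non-degenerate. A self-adjoint operator on a positive-definite $2$-plane would have real eigenvalues, so $h|_W$ must be Lorentzian; in an orthonormal basis of $W$, the self-adjointness condition $T^t H = HT$ with $H = \mathrm{diag}(1,-1)$ and the fact that the eigenvalues are $\alpha\pm i\beta$ pin down the $2\times 2$ block to $\left[\begin{smallmatrix}\alpha&\beta\\-\beta&\alpha\end{smallmatrix}\right]$. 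This yields Type 2.

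Case (A) splits according to the Jordan form: all blocks $1\times 1$; one $2\times 2$ block and one $1\times 1$ block (with either one or two distinct eigenvalues); or a single $3\times 3$ Jordan block. When $T$ is semisimple, the generalized eigenspaces coincide with eigenspaces, they are mutually orthogonal and non-degenerate, and any symmetric bilinear form on each eigenspace admits an $h$-orthonormal basis by Sylvester; this gives Type 1. When a $2\times 2$ Jordan block appears, say $Tu=bu$ and $Tv=bv+u$, imposing $h(Tu,v)=h(u,Tv)$ immediately forces $h(u,u)=0$, and then non-degeneracy forces $h(u,v)\neq 0$; after rescaling $u$ so that $h(u,v)=\pm 1$ and killing $h(v,v)$ by the replacement $v\mapsto v-\tfrac{h(v,v)}{2h(u,v)}u$ (which preserves the relation $Tv = bv+u$), and a similar clean-up with the isolated eigenvector $e$, one reaches Type 3 with $\epsilon=\pm 1$. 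For the full $3\times 3$ Jordan chain $Tu_1=au_1$, $Tu_2=au_2+u_1$, $Tu_3=au_3+u_2$, the identities $h(Tu_i,u_j)=h(u_i,Tu_j)$ force $h(u_1,u_1)=h(u_1,u_2)=0$ and $h(u_1,u_3)=h(u_2,u_2)$, so the Gram matrix has determinant $-h(u_2,u_2)^3$ and therefore $h(u_2,u_2)\neq 0$; rescaling and then adjusting $u_2,u_3$ by multiples of $u_1$ (the only modifications preserving the Jordan chain structure) kills the off-diagonal entries and produces the Type 4 normal form.

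The main obstacle is the delicate book-keeping in the non-semisimple cases: any modification of a Jordan basis vector must preserve the chain relations $Tu_k = au_k + u_{k-1}$, which restricts the allowable changes to adding lower chain elements, and one has to check that the available freedom exactly suffices to realize the claimed Gram matrix. In Type 3 one must also verify that the sign $\epsilon$ is a true invariant: reversing the sign of $u$ in a chain $\{u,v\}$ with $h(u,v)=1$ flips $h(u,v)$ to $-1$, and no admissible basis change can both preserve the normalization $h(u,v)=1$ and change the sign of $\epsilon$, which is why both $\epsilon=+1$ and $\epsilon=-1$ genuinely occur.
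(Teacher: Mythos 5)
The paper offers no proof of this Proposition: it is imported verbatim from O'Neill (Exercise 19, p.~261) and used as a black box, the only kindred computations being the explicit $O(2,1)$-conjugations carried out just after it and in Lemmas~\ref{lemma:OT1} and \ref{lemma:OT2} to recognize the type of a concrete Ricci operator. So your proposal is not an alternative to an argument in the paper but a self-contained proof, and its route --- real Jordan form, refined by the fact that generalized eigenspaces of distinct eigenvalues are $h$-orthogonal, followed by signature bookkeeping on the Gram matrix of each Jordan chain --- is correct and is the standard way to obtain the four Segr\'{e} types; what it buys is an actual proof of a statement the paper delegates to a textbook exercise, while the paper's later lemmas buy only the practical recognition of types for specific matrices.

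Two steps of your sketch are glossed and should be written out. In the complex-eigenvalue case, self-adjointness relative to an \emph{arbitrary} orthonormal basis of the invariant plane $W$ only forces a block $\left[\begin{smallmatrix} p & q\\ -q & s\end{smallmatrix}\right]$ with $(p-s)^2<4q^2$; it does not by itself ``pin down'' $p=s$. To reach $\left[\begin{smallmatrix} \alpha & \beta\\ -\beta & \alpha\end{smallmatrix}\right]$ you must still spend the residual $O(1,1)$ freedom, conjugating by a hyperbolic rotation with $\tanh 2\theta=(s-p)/(2q)$ --- exactly the boost computation the paper performs in the proof of Lemma~\ref{lemma:OT1} in the case $D<0$. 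Second, the normalizations $h(e,e)=1$ in Types 3 and 4 are not free: the sign comes from the signature, since the Gram determinants are $-h(e,e)h(u,v)^2$ and $-h(u_2,u_2)^3$ and must be negative for signature $(2,1)$, giving $h(e,e)>0$ and $h(u_2,u_2)>0$. Likewise, in Type 3 with equal eigenvalues $a=b$ you should record that $h(e,u)=0$ (from $h(Te,v)=h(e,Tv)$) before invoking non-degeneracy to conclude $h(u,v)\ne 0$, and note that $h(e,v)$ is killed by adjusting $e$ by a multiple of $u$ only because $a=b$, whereas for $a\ne b$ it already vanishes by orthogonality of generalized eigenspaces. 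With these details filled in the argument is complete, including your correct observation that $\epsilon=\mathrm{sign}\,h(u,v)$ is a genuine invariant in Type 3.
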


The vectors
$$
e_1=e,\ e_2=\tfrac{1}{\sqrt{2}}(u+v),\ e_3=\tfrac{1}{\sqrt{2}}(u-v)
$$
form an $h$-orthonormal basis with respect to which the last two matrices become respectively
$$
\left[\begin{matrix}a&0&0\\0&b+\frac{\epsilon}{2}&-\frac{\epsilon}{2}\\ 0&\frac{\epsilon}{2}&b-\frac{\epsilon}{2}\end{matrix}\right]  (\epsilon=\pm1),\quad\text{ or }\quad
\left[\begin{matrix}a&\frac{1}{\sqrt{2}}&-\frac{1}{\sqrt{2}}\\\frac{1}{\sqrt{2}}&a&\hspace{8pt}0\\ \frac{1}{\sqrt{2}}&0&\hspace{8pt}a\end{matrix}\right].
$$
These four types are called \emph{O'Neill type} or \emph{Segr\'{e} type} $\{11,1\}, \{1z\bar{z}\}, \{21\}$ and $\{3\}$, respectively.
Note that the O'Neill type $\{21\}$ is of the form
$$
\left[\begin{matrix}a&0&0\\0&b\pm 1 &\mp1\\0&\pm1&b\mp1\end{matrix}\right] \sim
\left[\begin{matrix}a&0&0\\0&b\pm 1 & 1\\0&-1&b\mp1\end{matrix}\right],
$$
because
$$
\left[\begin{matrix}a&0&0\\0&b\pm\frac{1}{2}&\mp\frac{1}{2}\\ 0&\pm\frac{1}{2}&b\mp\frac{1}{2}\end{matrix}\right] \sim
\left[\begin{matrix}a&0&0\\0&b\pm 1 &\mp1\\0&\pm1&b\mp1\end{matrix}\right]
$$
via {\footnotesize $\left[\begin{matrix}1&0&0\\0&\cosh{t}&\sinh{t}\\0&\sinh{t}&\cosh{t}\end{matrix}\right]$}$\in\O(2,1)$
where $t=-\ln\sqrt{2}$. Notice further that
{\footnotesize$\left[\begin{matrix}a&0&0\\0&b+1&1\\0&-1&b-1\end{matrix}\right] \nsim
\left[\begin{matrix}a&0&0\\0&b-1&1\\0&-1&b+1\end{matrix}\right]$}.
The O'Neill type $\{3\}$ is of the form
$$
\left[\begin{matrix}a&1&-1\\1&a&\hspace{8pt}0\\1&0&\hspace{8pt}a\end{matrix}\right]
=\left[\begin{matrix}1&\hspace{8pt}0&\hspace{8pt}0\\0&\hspace{8pt}\frac{3}{2\sqrt{2}}&-\frac{1}{2\sqrt{2}}\\
0&-\frac{1}{2\sqrt{2}}&\hspace{8pt}\frac{3}{2\sqrt{2}}\end{matrix}\right]^{-1}
\left[\begin{matrix}a&-\frac{1}{\sqrt{2}}&\frac{1}{\sqrt{2}}\\\frac{1}{\sqrt{2}}&\hspace{8pt}a&0\\ \frac{1}{\sqrt{2}}&\hspace{8pt}0&a\end{matrix}\right] \left[\begin{matrix}1&\hspace{8pt}0&\hspace{8pt}0\\0&\hspace{8pt}\frac{3}{2\sqrt{2}}&-\frac{1}{2\sqrt{2}}\\
0&-\frac{1}{2\sqrt{2}}&\hspace{8pt}\frac{3}{2\sqrt{2}}\end{matrix}\right].
$$

Thus we can always construct an orthonormal basis $\frakB$ for $\frakg$ such that
$[\Ric]_{\frakB}$ takes exactly one of the four O'Neill types.

The O'Neill type $\{11,1\}$ (the comma is used to separate the spacelike and timelike
eigenvectors) denotes a diagonalizable self adjoint operator.
The O'Neill type $\{1z\bar{z}\}$ denotes a self adjoint operator with one real and two complex conjugate eigenvalues.
A self adjoint operator of O'Neill type $\{21\}$ has two eigenvalues (one of which has multiplicity two),
each associated to a one-dimensional eigenspace.
A self adjoint operator of O'Neill type $\{3\}$ has three equal eigenvalues associated to a one-dimensional eigenspace.
The eigenvalues of $[\Ric]$ are called the \emph{Ricci eigenvalues}.
Hence, the O'Neill type of $\Ric$ is determined by its eigenvalues with associated eigenspaces.

\begin{Rmk}
In the Riemannian case, since $[\hat{r}]$ is symmetric, the eigenvalues of $[\hat{r}]$ are real, called the \emph{principal Ricci curvatures},
and the signature of the symmetric matrix $[\hat{r}]$ is well-defined, called the \emph{signature} of the Ricci curvature.
However, in the Lorentzian case, the Lorentzian version $[\Ric]$ of $\hat{r}$ may have complex eigenvalues.
In fact, complex eigenvalues occur only in the following cases:
\begin{itemize}
  \item $G_c$ with $c>1$, (2) in Theorem~\ref{thm: c>1-curvature};
  \item $G_1$, (3),(5) in Theorem~\ref{thm: c=1-curvature};
  \item $G_c$ with $c<1$, (10-1), (11) in Theorem~\ref{thm: c<1-curvature}.
\end{itemize}
As a result, the principal Ricci curvatures are defined always and the signature of the Ricci curvature
is defined whenever the eigenvalues are real numbers.
\end{Rmk}

The following two lemmas will be used to determine the O'Neill type of the Ricci operator.

\begin{Lemma}\label{lemma:OT1}
If  $[\Ric]_{\frakB}$ is of the form
$$
\left[\begin{matrix}a&\hspace{6pt}0&0\\0&\hspace{6pt}b&r\\0&-r&d\end{matrix}\right]\ \text{ or }\
\left[\begin{matrix}\hspace{6pt}b & 0 & r \\ \hspace{6pt}0 & a&0\\-r&0&d\end{matrix}\right]
$$
for some
orthonormal basis $\frakB$ for $\frakg$ with respect to the left invariant Lorentzian metric $h$ on $G_c$,
then $\Ric$ is of
$$
\begin{array}{lll}
&\text{{O'Neill type} $\{11,1\}$}\quad &\text{if $r=0$ or $($$r\ne 0$ and $D>0$$)$};\\
&\text{{O'Neill type} $\{1z\bar{z}\}$}\quad  &\text{if $r\ne0$ and $D<0$};\\
&\text{{O'Neill type} $\{21\}$}\quad  &\text{if $r\ne0$ and $D=0$,}
\end{array}
$$
where $D=(b-d)^2-4r^2$.
\end{Lemma}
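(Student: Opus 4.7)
The plan is to analyze the block structure of the matrix. In either of the two forms, the $3\times 3$ matrix splits into a $1\times 1$ block contributing the eigenvalue $a$ on a spacelike basis vector, and a $2\times 2$ block $M_0=\bigl(\begin{smallmatrix} b & r \\ -r & d \end{smallmatrix}\bigr)$ acting on the Lorentzian $2$-plane spanned by the remaining spacelike and timelike basis vectors. I first compute the characteristic polynomial of $M_0$, namely $\lambda^2-(b+d)\lambda+(bd+r^2)$, whose discriminant is precisely $(b-d)^2-4r^2=D$. So the O'Neill type is determined by the sign of $D$ together with whether $r=0$.

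If $r=0$, the full matrix is already diagonal in the orthonormal basis $\frakB$, so by the Proposition it is of type $\{11,1\}$. If $r\ne 0$ and $D>0$, then $M_0$ has two distinct real eigenvalues $\lambda_{\pm}=\tfrac{1}{2}\bigl((b+d)\pm\sqrt{D}\bigr)$; the eigenvectors $v_{\pm}=ry_2+(\lambda_{\pm}-b)y_3$ are orthogonal (as eigenvectors of the $h$-self-adjoint restriction for distinct eigenvalues), and a short computation gives $h(v_{\pm},v_{\pm})=r^2-(\lambda_{\pm}-b)^2$, which I will check is nonzero whenever $r\ne 0$ and $D\ne 0$. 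Since two $h$-orthogonal linearly independent non-null vectors in a Lorentzian $2$-plane are necessarily of opposite causal type, after normalization we obtain an $h$-orthonormal basis in which $\Ric$ is diagonal, yielding type $\{11,1\}$. If $r\ne 0$ and $D<0$, the $2\times 2$ block has a pair of complex conjugate eigenvalues, so $\Ric$ has one real and two complex conjugate eigenvalues, hence type $\{1z\bar{z}\}$.

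The remaining case $r\ne 0$, $D=0$ requires verifying that $M_0$ is not diagonalizable. The repeated eigenvalue is $\lambda=(b+d)/2$, and $M_0-\lambda I=\bigl(\begin{smallmatrix} (b-d)/2 & r \\ -r & -(b-d)/2 \end{smallmatrix}\bigr)$ has determinant $-D/4=0$ but is nonzero (since $r\ne 0$), so it has rank one and a one-dimensional kernel. Thus $\Ric$ is not diagonalizable, ruling out types $\{11,1\}$ and $\{1z\bar{z}\}$; moreover the eigenspace of $\Ric$ has dimension at least two (it contains $y_1$ as well), which rules out type $\{3\}$. Therefore $\Ric$ is of type $\{21\}$. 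The same argument applies verbatim to the second matrix form, with the roles of $y_1$ and $y_2$ exchanged.

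The main technical point is the causal-type verification in the case $D>0$, namely checking $h(v_{\pm},v_{\pm})\ne 0$ and concluding opposite causal character; the other cases reduce to spectral bookkeeping against the four canonical forms given by the Proposition.
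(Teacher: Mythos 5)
Your proof is correct, and it reaches the classification by a somewhat different route than the paper. Both arguments reduce to the spectral data of the block $M_0=\bigl(\begin{smallmatrix} b & r\\ -r & d\end{smallmatrix}\bigr)$, whose characteristic discriminant is exactly $D$; but where the paper proceeds constructively, exhibiting explicit hyperbolic rotations $[\id]_{\frakB',\frakB}\in O(2,1)$ (with $\theta=\tanh^{-1}(\cdot)$ or $\theta=\mp\ln\sqrt{r}$) that carry $[\Ric]_{\frakB}$ onto the canonical matrix of each O'Neill type, you argue abstractly: for $D>0$ you invoke $h$-orthogonality of eigenvectors of an $h$-self-adjoint operator belonging to distinct eigenvalues, plus the fact that two orthogonal, independent, non-null vectors in a Lorentzian $2$-plane have opposite causal characters; for $D\le 0$ you eliminate the other types by reality of eigenvalues, diagonalizability, and the dimension of the span of eigenvectors. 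The step you deferred does check out: writing $s=d-b$, one finds $h(v_\pm,v_\pm)=r^2-(\lambda_\pm-b)^2=-\tfrac{\sqrt{D}}{2}\,(\sqrt{D}\pm s)$, which can vanish (for $D>0$) only if $D=s^2$, i.e.\ $r=0$; even better, $h(v_+,v_+)\,h(v_-,v_-)=-Dr^2<0$, which yields the opposite causal characters directly without the separate $2$-plane argument. One small point of phrasing: in the case $D=0$, $r\ne 0$, excluding type $\{1z\bar z\}$ by ``non-diagonalizability'' should be understood over $\bbc$ (the $\{1z\bar z\}$ form is $\bbc$-diagonalizable, while your rank-one computation shows $M_0$ is a genuine Jordan block), or more simply by observing that all eigenvalues are real; with that reading your elimination, including the dimension count that rules out type $\{3\}$, is airtight and in fact handles the degenerate subcase $a=\tfrac{b+d}{2}$ more cleanly than the paper's wording. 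What your approach gives up is the explicit orthonormal bases and canonical representatives (e.g.\ the specific $\{21\}$ form with the $b\pm 1$ entries) that the paper's boosts produce, which is useful if one wants the precise normal form rather than just the type; what it buys is a shorter, coordinate-free argument.
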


\begin{proof} We may assume that {\footnotesize$[\Ric]_{\frakB}=\left[\begin{matrix}a&\hspace{8pt}0&0\\0&\hspace{8pt}b&r\\0&-r&d\end{matrix}\right]$} and $r\ge 0$ because
\begin{align*}
\left[\begin{matrix}\hspace{8pt}b & 0 & r \\ \hspace{8pt}0 & a&0\\-r&0&d\end{matrix}\right]
\sim \left[\begin{matrix}a&\hspace{8pt}0&0\\0&\hspace{8pt}b&r\\0&-r&d\end{matrix}\right], \quad
\left[\begin{matrix}a&\hspace{8pt}0&0\\0&\hspace{8pt}b&r\\0&-r&d\end{matrix}\right]
\sim \left[\begin{matrix}a&0&\hspace{8pt}0\\0&b&-r\\0&r&\hspace{8pt}d\end{matrix}\right]
\end{align*}
via {\footnotesize $\left[\begin{matrix}0 & 1 & 0 \\ 1 & 0&0\\0&0&1\end{matrix}\right], \left[\begin{matrix}1 & \hspace{8pt}0 & 0 \\ 0 & -1&0\\0&\hspace{8pt}0&1\end{matrix}\right]$}$\in O(2,1)$, respectively.
When $r=0$ or ($r\ne0$ and $b=d$), clearly the results hold.
Now assume that $r>0$ and $b\ne d$.
Since the Ricci operator $\Ric$ is $h$-self adjoint,
the O'Neill type of $\Ric$ is determined by its eigenvalues with associated eigenspaces.
The eigenvalues of $[\Ric]_{\frakB}$ are $a, \frac{b+d}{2}\pm \frac{\sqrt{D}}{2}$ where $D=(b-d)^2-4r^2$.

If $D<0$, the only possibility is the O'Neill type $\{1z\bar{z}\}$. In fact, we can see that
if we choose an orthonormal basis $\frakB'$ given by
$$
[\id]_{\frakB',\frakB} =
\left[\begin{matrix} 1&0&0\\0&\cosh\theta&\sinh\theta\\
0&\sinh\theta&\cosh\theta\end{matrix}\right]\in O(2,1)
$$
where $\theta=\tanh^{-1}\Big(\frac{d-b}{2r+\sqrt{-D}}\Big)$, then
$$
[\Ric]_{\frakB'} = \left[\begin{matrix}a&0&0\\0&\frac{b+d}{2}&\frac{\sqrt{-D}}{2}\\0&-\frac{\sqrt{-D}}{2}&\frac{b+d}{2}\end{matrix}\right],
$$
which is of O'Neill type $\{1z\bar{z}\}$.

Now suppose $D=0$. Then $[\Ric]_{\frakB}$  has two eigenvalues $a,\frac{b+d}{2}$ ($\frac{b+d}{2}$ has multiplicity two), each associated to a one-dimensional eigenspace.
Hence $\Ric$ is of O'Neill type $\{21\}$.
Since $D=0$, $b-d=\pm 2r$.
If we choose an orthonormal basis $\frakB'$ given by
$$
[\id]_{\frakB',\frakB} =
\left[\begin{matrix} 1&0&0\\0&\cosh\theta&\sinh\theta\\
0&\sinh\theta&\cosh\theta\end{matrix}\right]\in O(2,1)
$$
where  $\theta=\mp\ln \sqrt{r}$ as $b-d=\pm 2r$, respectively,
then
\begin{align*}
[\Ric]_{\frakB'} &= \left[\begin{matrix}a&\hspace{8pt}0&0\\0&\frac{b+d}{2}\pm1&1\\0&-1&\frac{b+d}{2}\mp1\end{matrix}\right]
\end{align*}
which is of O'Neill type $\{21\}$.

Finally assume $D>0$. Then $[\Ric]_{\frakB}$ has  real eigenvalues $a, \frac{b+d}{2}\pm \frac{\sqrt{D}}{2}$ and so diagonalizable.  Hence $\Ric$ is of O'Neill type $\{11,1\}$.
Since $4r^2=(d-b+\sqrt{D})(d-b-\sqrt{D})$ and $D>0$, $d-b+\sqrt{D}\ne\pm2r$.
If we choose an orthonormal basis $\frakB'$ given by
$$
[\id]_{\frakB',\frakB} =
\left[\begin{matrix} 1&0&0\\0&\cosh\theta&\sinh\theta\\
0&\sinh\theta&\cosh\theta\end{matrix}\right]\in O(2,1)
$$
where
$$
\theta=\left\{\begin{array}{ll}
\tanh^{-1}\Big(\frac{d-b+\sqrt{D}}{2r}\Big), &\Big|\frac{d-b+\sqrt{D}}{2r}\Big|<1;\\
\tanh^{-1}\Big(\frac{2r}{d-b+\sqrt{D}}\Big), &\Big|\frac{d-b+\sqrt{D}}{2r}\Big|>1,
\end{array} \right.
$$
 then
\begin{align*}
[\Ric]_{\frakB'} &= \left\{\begin{array}{ll}
\left[\begin{matrix}a&0&0\\0&\frac{b+d+\sqrt{D}}{2}&0\\0&0&\frac{b+d-\sqrt{D}}{2}\end{matrix}\right], &\Big|\frac{d-b+\sqrt{D}}{2r}\Big|<1;\\
\left[\begin{matrix}a&0&0\\0&\frac{b+d-\sqrt{D}}{2}&0\\0&0&\frac{b+d+\sqrt{D}}{2}\end{matrix}\right], &\Big|\frac{d-b+\sqrt{D}}{2r}\Big|>1
\end{array} \right.
\end{align*}
which is of O'Neill type $\{11,1\}$.
\end{proof}

\begin{Lemma}\label{lemma:OT2}
If  $[\Ric]_{\frakB}$ is of the form {$\left[\begin{matrix}b&c&0\\c&d&0\\0&0&a\end{matrix}\right]$} for some
orthonormal basis $\frakB$ for $\frakg$ with respect to the left invariant Lorentzian metric $h$ on $G_c$,
then $\Ric$ is of O'Neill type $\{11,1\}$.
\end{Lemma}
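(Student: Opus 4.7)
The plan is to exploit the block structure of the given matrix. The basis $\frakB=\{y_1,y_2,y_3\}$ is $h$-orthonormal with $h(y_1,y_1)=h(y_2,y_2)=1$ and $h(y_3,y_3)=-1$, so the first two basis vectors span a spacelike plane $W=\mathrm{span}\{y_1,y_2\}$ on which $h|_W$ is the ordinary Euclidean inner product, while $y_3$ spans the $h$-orthogonal timelike line.

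First I observe that the off-diagonal zeros in the third row and column of $[\Ric]_\frakB$ say that $y_3$ is an eigenvector of $\Ric$ with eigenvalue $a\in\bbr$, and that $W$ is $\Ric$-invariant. The restriction $\Ric|_W$ has matrix $M=\left[\begin{smallmatrix}b&c\\c&d\end{smallmatrix}\right]$ in the $h|_W$-orthonormal basis $\{y_1,y_2\}$. Because $M$ is a real symmetric $2\x 2$ matrix, the ordinary spectral theorem gives an orthogonal $R\in O(2)$ with $R^t M R=\mathrm{diag}(\lambda_1,\lambda_2)$ for some real $\lambda_1,\lambda_2$.

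Next, I extend $R$ to the block matrix $P=\left[\begin{smallmatrix} R & 0 \\ 0 & 1\end{smallmatrix}\right]$. Since $R\in O(2)$ and the $(3,3)$ entry is $1$, $P$ preserves the standard Lorentzian form $J_{2,1}=\mathrm{diag}(1,1,-1)$, i.e.\ $P\in O(2,1)$. Taking the new orthonormal basis $\frakB'$ with $[\id]_{\frakB',\frakB}=P$, we obtain
$$
[\Ric]_{\frakB'}=P^t[\Ric]_\frakB P=\left[\begin{matrix}\lambda_1&0&0\\0&\lambda_2&0\\0&0&a\end{matrix}\right],
$$
a real diagonal matrix in an $h$-orthonormal basis. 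By definition this is exactly the O'Neill type $\{11,1\}$.

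There is essentially no obstacle here; the point is simply that the spacelike plane $W$ carries a positive-definite inner product, so diagonalization of the symmetric $2\x 2$ block is handled by the usual (Riemannian) spectral theorem, and the diagonalizing transformation automatically lies in $O(2,1)$ because it acts trivially on the timelike direction. This contrasts with Lemma~\ref{lemma:OT1}, where the off-diagonal block involves a mixed spacelike/timelike pair and the possibility of complex or defective eigenvalues genuinely arises.
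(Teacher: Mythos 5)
Your proof is correct and follows essentially the same route as the paper: diagonalize the symmetric $2\times 2$ spacelike block by the Euclidean spectral theorem and extend the rotation by the identity on the timelike direction to get an element of $O(2,1)$ (note that since your block matrix $P$ is Euclidean-orthogonal, $P^t=P^{-1}$, so your congruence coincides with the conjugation giving the matrix of $\Ric$ in the new orthonormal basis). No gap to report.
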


\begin{proof} Since {\footnotesize$\left[\begin{matrix}b&c\\c&d\end{matrix}\right]$} is symmetric, it is orthogonally diagonalizable. Thus there is $B\in O(2) \subset O(2,1)$ such that
$$
\left[\begin{matrix} B&0\\0&\det(B)\end{matrix}\right]^{-1}
\left[\begin{matrix}b&c&0\\c&d&0\\0&0&a\end{matrix}\right]
\left[\begin{matrix} B&0\\0&\det(B)\end{matrix}\right]
=\left[\begin{matrix}\frac{b+d+\sqrt{D'}}{2}&0&0\\0&\frac{b+d-\sqrt{D'}}{2}&0\\0&0&a\end{matrix}\right]
$$
where $D'=(b-d)^2+4c^2$. Hence $\Ric$ is of O'Neill type $\{11,1\}$.
\end{proof}

Note also that the matrix of $\ric$ with respect to the orthonormal basis $\{y_i\}$ is
\begin{align}\label{Ric:ric}
\Big[\ric\Big]_{\{y_i\}}
=J_{2,1}\Big[\Ric\Big]_{\{y_i\}}
=\left[\begin{matrix}R_{11}&R_{12}&\hspace{8pt}R_{13}\\
R_{12}&R_{22}&\hspace{8pt}R_{23}\\R_{13}&R_{23}&-R_{33}
\end{matrix}\right]
\end{align}
and the scalar curvature $\rho$ is
\begin{equation}
\label{eq:scalar}
\begin{aligned}
\rho&=\mathrm{trace}(\Ric)=R_{11}+R_{22}+R_{33}\\
&=\ric(y_1,y_1)+\ric(y_2,y_2)-\ric(y_3,y_3),\\
&=2\left(\kappa(y_1,y_2)+\kappa(y_1,y_3)+\kappa(y_2,y_3)\right).
\end{aligned}
\end{equation}

Clearly,  the Ricci eigenvalues of curvature operator $\Ric$ are independent of the choice of
basis for $\frakg$. Now show that the signature of the Ricci curvature tensor $\ric$ is independent of the choice of basis for $\frakg$. Let $\{x_1,x_2,x_3\}$ be another, not necessarily orthonormal, basis for $\frakg$.
By writing
$x_j=\sum_i a_{ij}y_i$ for some $a_{ij}$,
we have
\begin{align*}
\ric(x_i,x_j)&=\ric(\sum_k a_{ki}y_k, \sum_\ell a_{\ell j}y_\ell)\\
&=\sum_{k,\ell}a_{ki}\cdot \ric(y_k,y_\ell)\cdot a_{\ell j}
\end{align*}
Hence
\begin{align}\label{id:ric}
\Big[\ric\Big]_{\{x_i\}}=A^t\cdot \Big[\ric\Big]_{\{y_i\}}\cdot A
\end{align}
where $A=(a_{ij})$ is the associated transition matrix $[\id]_{\{x_i\},\{y_i\}}$.
This implies that $[\ric]_{\{x_i\}}$ and $[\ric]_{\{y_i\}}$ have the determinants of same sign,
hence the products of their eigenvalues have the same sign.
Moreover, we obtain
\begin{align*}
\Big[\Ric\Big]_{\{x_i\}}
= A^{-1} \Big[\Ric\Big]_{\{y_i\}} A
= A^{-1} J_{2,1}\Big[\ric\Big]_{\{y_i\}} A
= A^{-1} J_{2,1}(A^{-1})^{t}\Big[\ric\Big]_{\{x_i\}}
\end{align*}

\begin{Lemma}
$[\ric]_{\{x_i\}}$ and $[\ric]_{\{y_i\}}$ have the same signature.
Consequently, the signature of $[\ric]$ is defined.
\end{Lemma}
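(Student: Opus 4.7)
The plan is to invoke Sylvester's law of inertia on the congruence relation already derived in the excerpt. Since $\{x_1,x_2,x_3\}$ and $\{y_1,y_2,y_3\}$ are both bases of $\frakg$, the transition matrix $A=[\id]_{\{x_i\},\{y_i\}}$ is invertible, and identity \eqref{id:ric} reads
$$
\Big[\ric\Big]_{\{x_i\}}=A^t\cdot \Big[\ric\Big]_{\{y_i\}}\cdot A.
$$
Thus the two real symmetric matrices representing $\ric$ in the two bases are congruent via the invertible matrix $A$.

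Sylvester's law of inertia states that two congruent real symmetric matrices have the same numbers of positive, negative, and zero eigenvalues; equivalently, they have the same signature. Applying this to the identity above gives the conclusion immediately.

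To make the argument fully self-contained, I would briefly recall why congruence preserves signature: if $v_1,\ldots,v_p$ span a maximal subspace on which the quadratic form associated with $[\ric]_{\{y_i\}}$ is positive definite, then $A^{-1}v_1,\ldots,A^{-1}v_p$ span a subspace of the same dimension on which the form associated with $[\ric]_{\{x_i\}}$ is positive definite, and similarly for negative definite subspaces. The dimension of the null space is preserved because $A$ is invertible. Hence the triple of invariants $(p,q,r)$ coincides for the two matrices, so the signature of $[\ric]$ is a well-defined invariant of the bilinear form $\ric$, independent of the chosen basis. There is no real obstacle here; the only point to be careful about is that the relation is a congruence $A^t M A$ rather than a similarity $A^{-1}MA$, so it is the signature (not the spectrum) of $\ric$ that is preserved, which is exactly what we want.
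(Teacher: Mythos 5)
Your proposal is correct, and it reaches the conclusion by a cleaner route than the paper. Both arguments start from the same congruence identity \eqref{id:ric}, $[\ric]_{\{x_i\}}=A^t[\ric]_{\{y_i\}}A$ with $A$ invertible, but from there you simply invoke Sylvester's law of inertia (and sketch its standard proof: the quadratic forms agree under $v\mapsto A^{-1}v$, so maximal positive-definite and negative-definite subspaces correspond with equal dimensions, and the kernels correspond since $\ker(A^tMA)=A^{-1}\ker M$). The paper instead verifies the invariance by hand: it first treats the zero form, then argues that positive or negative definiteness is preserved under congruence and asserts the result for the signatures $(+,+,+)$, $(-,-,-)$, $(+,+,-)$, $(+,0,-)$, $(+,-,-)$, and finally handles the remaining signatures containing zeros by showing that linearly independent null vectors of $[\ric]_{\{y_i\}}$ are carried by $A^{-1}$ to linearly independent null vectors of $[\ric]_{\{x_i\}}$. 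Your subspace argument subsumes this case enumeration and is arguably tighter, since definiteness preservation plus kernel dimension alone does not by itself separate all the mixed signatures the paper lists; the maximal-subspace formulation does. You also correctly flag the one conceptual point that matters in this Lorentzian setting, namely that $[\ric]$ transforms by congruence (signature is the invariant) while $[\Ric]$ transforms by similarity (eigenvalues are the invariant), which is exactly the distinction the paper is drawing in this section. No gap.
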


\begin{proof}
Suppose $[\ric]_{\{y_i\}}$ has the signature $(0,0,0)$. Since $[\ric]_{\{y_i\}}=O$ is symmetric,  $[\ric]_{\{x_i\}}=A^t[\ric]_{\{y_i\}}A=O$.
Hence $[\ric]_{\{x_i\}}$ and $[\ric]_{\{y_i\}}$ have the same signature $(0,0,0)$.
Now assume that the signature of $[\ric]_{\{y_i\}}$ is not $(0,0,0)$.
Since $A$ is invertible and $[\ric]_{\{x_i\}}=A^t [\ric]_{\{y_i\}} A$,  $[\ric]_{\{x_i\}}$ is positive(negative) definite
if and only if $[\ric]_{\{y_i\}}$ is positive(negative) definite
if and only if all the eigenvalues of $[\ric]_{\{y_i\}}$ are positive(negative).
It follows that the result holds when the signatures are $(+,+,+)$, $(-,-,-)$, $(+,+,-)$, $(+,0,-)$ and  $(+,-,-)$.

If $\lambda=0$ is an eigenvalue of $[\ric]_{\{y_i\}}$, then $[\ric]_{\{y_i\}}\bfx=\mathbf{0}$ for some $\bfx\ne\mathbf{0}$.
Thus $A^{-1}\bfx\ne\mathbf{0}$ and
$$
[\ric]_{\{x_i\}}(A^{-1}\bfx) = (A^t [\ric]_{\{y_i\}} A)(A^{-1}\bfx) = A^t [\ric]_{\{y_i\}}\bfx=\mathbf{0}.
$$
It follows that $0$ is an eigenvalue of $[\ric]_{\{x_i\}}$.
If $\bfx$ and $\bfy$ are linearly independent eigenvectors of $[\ric]_{\{y_i\}}$ corresponding to the eigenvalue $0$,
then  $A^{-1}\bfx$ and $A^{-1}\bfy$ are linearly independent eigenvectors of $[\ric]_{\{x_i\}}$ corresponding to the eigenvalue $0$. Based on this fact,
the result holds when the signatures are $(+,+,0)$, $(+,0,0)$, $(0,0,0)$, $(0,0,-)$ and  $(0,-,-)$.
\end{proof}

Let $g$ be a left invariant Riemannian metric on a connected Lie group $G$ of dimension three.
Let $\rho$ denote the sectional curvature,
and let $r$ denote the Ricci curvature of $(G,g)$.
For any orthogonal vectors $u$ and $v$, the sectional curvature $\kappa(u,v)$
associated with $u$ and $v$ is given by the formula (\cite[p.~\!306]{Milnor})
$$
\kappa(u,v)=||u\x v||^2\,\frac{\rho}{2}-r(u\x v).
$$

Now, we give its Lorentzian version.
Let $h$ be a left-invariant Lorentzian inner product on a Lie algebra $\frakg$ of dimension three.
We fix an orthonormal basis $\{y_1,y_2,y_3\}$ with respect to $h$.
The Lorentzian cross product is defined as follows
$$
y_1\x y_2=-y_3,\ y_2\x y_3=y_1,\ y_3\x y_1=y_2.
$$

Let $u$ and $v$ be orthogonal vectors, i.e. $h(u,v)=0$.
Write
$$
u=\sum_iu_iy_i,\quad
v=\sum_iv_iy_i.
$$
Since the cross product is bilinear and skew-symmetric, we see that
\begin{align*}
&u\x v=\sum_i w_iy_i,\\
&||u\x v||^2=w_1^2+w_2^2-w_3^2,
\end{align*}
where $w_1=u_2v_3-u_3v_2,\ w_2=u_3v_1-u_1v_3,\ w_3=-(u_1v_2-u_2v_1)$.

Further, we have
\begin{align*}
\ric(u\x v,&u\x v)
=\ric\left(\sum_i w_iy_i,\sum_i w_iy_i\right)\\
&=\sum_{i,j} w_iw_j \left(h(R_{y_iy_1}y_j,y_1)+h(R_{y_iy_2}y_j,y_2)-h(R_{y_iy_3}y_j,y_3)\right)\\
&=w_1^2\left(h(R_{y_1y_2}y_1,y_2)-h(R_{y_1y_3}y_1,y_3)\right)\\
&\quad+w_2^2\left(h(R_{y_1y_2}y_1,y_2)-h(R_{y_2y_3}y_2,y_3)\right)\\
&\quad+w_3^2\left(h(R_{y_1y_3}y_1,y_3)+h(R_{y_2y_3}y_2,y_3)\right)\\
&\quad-2w_1w_2\ h(R_{y_1y_3}y_2,y_3)-2w_1w_3\ h(R_{y_1y_2}y_2,y_3)\\
&\quad+2w_2w_3\ h(R_{y_1y_2}y_1,y_3).
\end{align*}
Hence, by substituting with (\ref{eq:scalar}), we get
\begin{align*}
&||u\x v||^2\ \frac{\rho}{2}-\ric(u\x v,u\x v)\\
&=-w_3^2\ h(R_{y_1y_2}y_1,y_2)-w_2^2\ h(R_{y_1y_3}y_1,y_3)-w_1^2\ h(R_{y_2y_3}y_2,y_3)\\
&\ +2w_1w_2\ h(R_{y_1y_3}y_2,y_3)+2w_1w_3\ h(R_{y_1y_2}y_2,y_3)-2w_2w_3\ h(R_{y_1y_2}y_1,y_3).
\end{align*}
On the other hand, we have
\begin{align*}
h(R_{uv}u,v)
&=\sum_{i,j,k,\ell}u_iv_ju_kv_\ell\ h(R_{y_iy_j}y_k,y_\ell)\\
&=\sum_{i<j, k<\ell}(u_iv_j-u_jv_i)(u_kv_\ell-u_\ell v_k)h(R_{y_iy_j}y_k,y_\ell)\\
&=-\left(||u\x v||^2\ \frac{\rho}{2}-\ric(u\x v,u\x v)\right).
\end{align*}
In conclusion, we have the following result.

\begin{Lemma}\label{lamma:Lorentzian version of Milnor's formula}
Let $h$ be a left invariant Lorentzian metric on a connected Lie group $G$ of dimension three.
If $u$ and $v$ are orthogonal vectors, then
\begin{align}\label{identity:sec}
h(u,u)h(v,v)\,\kappa(u,v)=-\left(||u\x v||^2\ \frac{\rho}{2}-\ric(u\x v,u\x v)\right).
\end{align}
In particular,
\begin{align*}
\kappa(y_1,y_2)&=\frac{\rho}{2}+\ric(y_3,y_3) = \frac{1}{2}(\hspace{8pt}R_{11}+R_{22}-R_{33}),\\
\kappa(y_2,y_3)&=\frac{\rho}{2}-\ric(y_1,y_1) = \frac{1}{2}(-R_{11}+R_{22}+R_{33}),\\
\kappa(y_3,y_1)&=\frac{\rho}{2}-\ric(y_2,y_2) = \frac{1}{2}(\hspace{8pt}R_{11}-R_{22}+R_{33}).
\end{align*}
\end{Lemma}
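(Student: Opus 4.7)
The strategy is a direct expansion in the orthonormal basis $\{y_1,y_2,y_3\}$; essentially every ingredient has already appeared, so the plan is to organize the computation rather than to introduce any new idea. Write $u=\sum u_iy_i$, $v=\sum v_iy_i$ with $h(u,v)=0$, and set $w_1=u_2v_3-u_3v_2$, $w_2=u_3v_1-u_1v_3$, $w_3=-(u_1v_2-u_2v_1)$ so that $u\times v=\sum w_iy_i$ and $\|u\times v\|^2=w_1^2+w_2^2-w_3^2$ by the definition of the Lorentzian cross product.

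First I would expand $\ric(u\times v,u\times v)$ bilinearly using the orthonormal formula $\ric(y_i,y_j)=h(R_{y_iy_1}y_j,y_1)+h(R_{y_iy_2}y_j,y_2)-h(R_{y_iy_3}y_j,y_3)$; after collecting terms, the diagonal coefficients of $w_i^2$ pick up pairs of sectional-type quantities while the off-diagonal coefficients involve the mixed components $h(R_{y_iy_j}y_k,y_\ell)$, exactly as displayed in the paragraph preceding the statement. Subtracting $\|u\times v\|^2\rho/2$ and invoking \eqref{eq:scalar} in the form $\rho=2(\kappa(y_1,y_2)+\kappa(y_1,y_3)+\kappa(y_2,y_3))$, each $w_i^2$ coefficient collapses to a single term $h(R_{y_jy_k}y_j,y_k)$ with the appropriate sign dictated by the Lorentzian signature of $u\times v$.

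Next I would compute $h(R_{uv}u,v)$ by a fourfold expansion, then group using the antisymmetry $R_{y_iy_j}=-R_{y_jy_i}$ and the pair-symmetry $h(R_{y_iy_j}y_k,y_\ell)=h(R_{y_ky_\ell}y_i,y_j)$ of the Riemann tensor; this reduces the sum over indices $i<j$, $k<\ell$ to the combinations $(u_iv_j-u_jv_i)(u_kv_\ell-u_\ell v_k)$, which are precisely (up to sign) the monomials $w_a w_b$ that appeared on the other side. Matching term by term against the previous expression yields
$$
h(R_{uv}u,v)=-\Big(\|u\times v\|^2\,\tfrac{\rho}{2}-\ric(u\times v,u\times v)\Big),
$$
and dividing by $h(u,u)h(v,v)$ (which is nonzero because $h(u,v)=0$ and $u,v$ are to be linearly independent for $\kappa$ to be defined) gives the identity \eqref{identity:sec}.

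Finally, the three particular formulas come from specializing: for $(u,v)=(y_1,y_2)$ one has $u\times v=-y_3$, so $\|u\times v\|^2=-1$ and $\ric(u\times v,u\times v)=\ric(y_3,y_3)=-R_{33}$, giving $\kappa(y_1,y_2)=\rho/2-R_{33}=\tfrac12(R_{11}+R_{22}-R_{33})$ after using $\rho=R_{11}+R_{22}+R_{33}$; the two remaining cases are analogous via $y_2\times y_3=y_1$ and $y_3\times y_1=y_2$, remembering that $h(y_3,y_3)=-1$ flips the appropriate signs. The only real obstacle is the bookkeeping of signs coming from the indefinite signature — in particular the $-w_3^2$ in $\|u\times v\|^2$ and the $-R_{33}$ in $\ric(y_3,y_3)$ — but these cancel consistently once one systematically uses $[\ric]=J_{2,1}[\Ric]$ from \eqref{Ric:ric}.
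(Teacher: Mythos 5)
Your proposal is correct and follows essentially the same route as the paper: the paper's own argument is exactly this orthonormal-basis expansion of $\ric(u\times v,u\times v)$, the subtraction of $||u\times v||^2\,\rho/2$ via \eqref{eq:scalar}, the fourfold expansion of $h(R_{uv}u,v)$ regrouped into the products $(u_iv_j-u_jv_i)(u_kv_\ell-u_\ell v_k)$, and the same specialization to $(y_1,y_2)$, $(y_2,y_3)$, $(y_3,y_1)$ using $[\ric]=J_{2,1}[\Ric]$ from \eqref{Ric:ric}. One minor caution: orthogonality plus linear independence does not by itself force $h(u,u)h(v,v)\ne 0$ in Lorentzian signature (degenerate planes exist), but this is harmless here because \eqref{identity:sec} is stated in product form and presupposes that $\kappa(u,v)$ is defined, which already requires $h(u,u)h(v,v)-h(u,v)^2\ne 0$.
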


When the Ricci operator $\Ric$ is computed,
\eqref{Ric:ric} yields the Ricci curvature tensor $\ric$,
\eqref{eq:scalar} yields the scalar curvature $\rho$,
and Lemma~\ref{lamma:Lorentzian version of Milnor's formula} gives rise to the sectional curvatures $\kappa(y_i,y_j)$.
For this reason, only the Ricci operator will be described in the following sections.
The scalar curvature and section curvatures are included in Tables~\ref{tab1}, \ref{tab2}, \ref{tab3}, \ref{tab4} and \ref{tab5}.

\subsection{The case of $G_I$}
\label{sec:G_I}

In this section, we consider the three-dimensional non-unimodular Lie group $G_I$,
and then we will compute the moduli space $\aut(G_I)\bs\frakM(G_I)$ and the Ricci operator.

Recalling from Section~\ref{sec: Lie algebras} that
\begin{align*}
\aut(\frakg_I)= \left\{\left[
\begin{matrix} \mathrm{GL}(2,\bbr)& *\\ 0 & 1\end{matrix}\right]
\bigg|\,\, *\in\bbr^2\right\},
\end{align*}
we obtain the following result.

\begin{Thm}\label{thm:G_I}
Any left invariant Lorentzian metric on $G_I$ is equivalent up to
automorphism to a metric whose associated matrix is of the form
$$
\left[\begin{matrix} 1 & \hspace{8pt}0 & 0 \\ 0 & -1 & 0 \\ 0 & \hspace{8pt}0 & \mu \end{matrix} \right] \quad\text{or}\quad
\left[\begin{matrix} 1 & 0 & \hspace{8pt}0 \\ 0 & 1 & \hspace{8pt}0 \\ 0 & 0 & -\mu \end{matrix} \right] \quad\text{or}\quad
\left[\begin{matrix} 1 & 0 & 0 \\ 0 & 0 & 1 \\ 0 & 1 & 0 \end{matrix} \right]
$$
where $\mu> 0$.
\end{Thm}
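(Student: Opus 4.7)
My strategy exploits the affine block structure of $\aut(\frakg_I)$ recalled in Section~\ref{sec: Lie algebras}. Writing $[h]=\begin{pmatrix} H & b \\ b^{t} & c \end{pmatrix}$ with respect to the basis $\{x,y,z\}$, where $H$ is a $2\times 2$ symmetric block, $b\in\bbr^{2}$, $c\in\bbr$, and taking $\psi=\begin{pmatrix} A & v \\ 0 & 1 \end{pmatrix}\in\aut(\frakg_I)$, a direct block multiplication yields
\[
\psi^{t}\,[h]\,\psi=\begin{pmatrix} A^{t}HA & A^{t}(Hv+b) \\ (Hv+b)^{t}A & v^{t}Hv+2v^{t}b+c \end{pmatrix}.
\]
Thus the $\mathrm{GL}(2,\bbr)$-factor $A$ acts on $H$ by congruence, while the translation parameter $v$ can be used independently to modify the off-diagonal block and the $(3,3)$-corner. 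I would then case on the rank of $H$.

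\emph{Case 1: $H$ is invertible.} Set $v=-H^{-1}b$, which clears the off-diagonal block, so $[h]$ becomes $\mathrm{diag}(H,c')$ with $c'=c-b^{t}H^{-1}b$. Since the full matrix must have signature $(+,+,-)$, Sylvester's law leaves exactly two possibilities: either $H$ has signature $(+,+)$ with $c'<0$, or $H$ has signature $(+,-)$ with $c'>0$ (the case where $H$ has signature $(-,-)$ is incompatible with the Lorentzian signature). A suitable $A\in\mathrm{GL}(2,\bbr)$ then brings $H$ to $\mathrm{diag}(1,1)$ or $\mathrm{diag}(1,-1)$ respectively, and writing $\mu=|c'|>0$ gives the second or first canonical form.

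\emph{Case 2: $H$ has rank $1$.} By a preliminary congruence I may assume $H=\mathrm{diag}(\epsilon,0)$ with $\epsilon=\pm1$. Expanding $\det[h]$ along the second row gives $\det[h]=-\epsilon b_{2}^{2}$, and the Lorentzian condition $\det[h]<0$ forces $\epsilon=1$ and $b_{2}\ne 0$. Next, the translation $v_{1}=-b_{1}$ kills the $(1,3)$-entry, a suitable $v_{2}$ annihilates the corner, and finally $A=\mathrm{diag}(1,1/b_{2})$ normalizes the $(2,3)$-entry to $1$. This yields the third canonical form.

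\emph{Case 3: $H=0$.} Then $\det[h]=0$, which is incompatible with the non-degeneracy of a Lorentzian metric.

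The main obstacle is the rank-$1$ case: because $H$ is singular, the decoupling trick $v=-H^{-1}b$ of Case~1 is unavailable, and the translation $v$ and the linear part $A$ must be coordinated rather than applied independently. The crucial observation is the determinant identity $\det[h]=-\epsilon b_{2}^{2}$, which both selects the sign $\epsilon=1$ and ensures $b_{2}\ne 0$; once those are pinned down, the chain of normalizations proceeds in a straightforward manner.
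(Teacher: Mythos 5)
Your proposal is correct and follows essentially the same route as the paper: both exploit the block structure $\mathrm{GL}(2,\bbr)\ltimes\bbr^2$ of $\aut(\frakg_I)$ acting by congruence, split into cases according to the (non)degeneracy of the upper-left $2\times2$ block, clear the off-diagonal entries by the translation part, and use the signature/determinant constraint $\det[h]<0$ to pin down the remaining signs (your organization by $\mathrm{rank}(H)$ matches the paper's dichotomy $h_{11}h_{22}\ne0$ versus $h_{11}h_{22}=0$ after rotating away $h_{12}$). The only item in the paper's proof not reflected in yours is the closing remark that the listed normal forms are pairwise inequivalent, which is not required by the literal statement.
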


\begin{proof}
Let $h$ be a left invariant Lorentzian metric on $G_I$.
Then $[h]=[h_{ij}]$ is a symmetric and non-degenerate real $3\times 3$ matrix.
Since $[h]$ is symmetric and {\footnotesize $\left[\begin{matrix} \mathrm{SO}(2) & \begin{matrix}0\\0\end{matrix} \\
\begin{matrix}0&0\end{matrix} & 1 \end{matrix} \right]$} $\subset\aut(\mathfrak{g})$,
we may assume that $h_{12}=0$.
First assume  $h_{11} h_{22}\ne 0$. Then $C =${\footnotesize
$\left[\begin{matrix}
1 & 0 & -\frac{h_{13}}{h_{11}}\\
0 & 1 & -\frac{h_{23}}{h_{22}} \\
0 & 0 & 1 \end{matrix}\right]$} $\in\aut(\frakg_I)$ and
 $C^t B^t[h]B C = \diag\{h_{11}, h_{22}, \mu\}$.
Since only one of $h_{11}, h_{22}$ and $\mu$ is negative,
either $h_{11} h_{22}<0, \mu>0$ or $h_{11}>0, h_{22}>0, \mu<0$.
If $h_{11} h_{22}<0$ and $\mu>0$, then we may assume that $h_{11}>0$ and $h_{22}<0$.
Let $D= \diag\{\frac{1}{\sqrt{h_{11}}}, \frac{1}{\sqrt{-h_{22}}}, 1\}\in\aut(\frakg_I)$.
Then
$D^tC^tB^t[h]BCD=\diag\{1,-1,\mu\}$.
If $h_{11}>0, h_{22}>0$ and $\mu<0$, then
$D=\diag\{\frac{1}{\sqrt{h_{11}}}, \frac{1}{\sqrt{h_{22}}}, 1\}\in\aut(\frakg_I)$
and $D^tC^tB^t[h]BCD=\diag\{1,1,\mu\}$.

Now assume $h_{11} h_{22}= 0$. Since one of $h_{11}$ and $h_{22}$ is non-zero, we may assume $h_{11}\ne0$ and $h_{22}=0$. Since $\det[h]=-h_{11}h_{23}^2<0$, we have $h_{11}>0$ and $h_{23}\ne0$.
Let $C =${\footnotesize
$\left[\begin{matrix}
\frac{1}{\sqrt{h_{11}}} & 0 & -\frac{h_{13}}{h_{11}}\\ 0 & 1 & 0 \\ 0 & 0 & 1 \end{matrix}\right]$} $\in\aut(\frakg_I)$. Then
 $C^tB^t[h]BC =$ {\footnotesize $\left[\begin{matrix} 1 & 0 & 0 \\ 0 & 0 & \mu \\0 & \mu & \nu \end{matrix} \right]$}. Since $\det(C^tB^t[h]BC)=-\mu^2<0$, we have $\mu\ne 0$.
Let $D =${\footnotesize
$\left[\begin{matrix}
1 & 0 & \hspace{8pt}0 \\ 0 & \frac{1}{\mu} & -\frac{\nu}{\mu}\\
0 & 0 & \hspace{8pt}1 \end{matrix}\right]$} $\in\aut(\frakg_I)$. Then
 $D^tC^tB^t[h]BCD =$ {\footnotesize $\left[\begin{matrix} 1 & 0 & 0 \\ 0 & 0 & 1 \\0 & 1 & 0 \end{matrix} \right]$}.

Finally it is easy to see that any three such distinct matrices are not equivalent.
\end{proof}

For each left invariant Lorentzian metric $h$ on $G_I$ whose associated matrix is given in Theorem~\ref{thm:G_I}, we obtain an orthonormal basis $\{y_i\}$ as follows:
\begin{itemize}
  \item [(1)] If $[h]=${\footnotesize$\left[\begin{matrix} 1 & \hspace{8pt}0 & 0 \\ 0 & -1 & 0 \\ 0 & \hspace{8pt}0 & \mu \end{matrix} \right]$} with $\mu> 0$,
\begin{align*}
  &y_1=x, \ y_2=\frac{1}{\sqrt{\mu}}z, \, y_3=y;\\
  &[y_1,y_2]=-\frac{1}{\sqrt{\mu}}y_1, \, [y_2,y_3]=\frac{1}{\sqrt{\mu}}y_3, \, [y_3,y_1]=0.
\end{align*}

  \item [(2)] If $[h]=${\footnotesize$\left[\begin{matrix} 1 & 0 & \hspace{8pt}0 \\ 0 & 1 & \hspace{8pt}0 \\ 0 & 0 & -\mu \end{matrix} \right]$} with $\mu> 0$,
 \begin{align*}
  &y_1=x, \, y_2=y, \, y_3=\frac{1}{\sqrt{\mu}}z;\\
  &[y_1,y_2]=0, \, [y_2,y_3]=-\frac{1}{\sqrt{\mu}}y_2, \,
  [y_3,y_1]=\frac{1}{\sqrt{\mu}}y_1.
\end{align*}

  \item [(3)] If $[h]=${\footnotesize$\left[\begin{matrix} 1 & 0 & 0 \\ 0 & 0 & 1 \\ 0 & 1 & 0 \end{matrix} \right]$},
  \begin{align*}
  &y_1=x, \, y_2=\frac{1}{\sqrt{2}}(y+z), \, y_3=\frac{1}{\sqrt{2}}(y-z);\\
  &[y_1,y_2]=-\frac{1}{\sqrt{2}}y_1, \,
  [y_2,y_3]=\frac{1}{\sqrt{2}}(y_2+y_3), \,
  [y_3,y_1]=-\frac{1}{\sqrt{2}}y_1.
 \end{align*}
\end{itemize}

With respect to the orthonormal basis $\{y_i\}$, we have the following:

\begin{Thm}\label{thm:G_I-curvature}
The Ricci operator $\Ric$ of the metric $h$ on $G_I$ is expressed as follows:
\begin{itemize}
  \item [(1)] If $[h]=\left[\begin{matrix} 1 & \hspace{8pt}0 & 0 \\ 0 & -1 & 0 \\ 0 & \hspace{8pt}0 & \mu \end{matrix} \right]$ with $\mu> 0$,
\begin{align*}
[\Ric]_{\{y_i\}}
&=\diag\Big\{-\frac{2}{\mu},-\frac{2}{\mu},-\frac{2}{\mu}\Big\}
\quad(\text{O'Neill type $\{11,1\}$})
\end{align*}

  \item [(2)] If $[h]=\left[\begin{matrix} 1 & 0 & 0 \\ 0 & 1 & 0 \\ 0 & 0 & -\mu \end{matrix} \right]$ with $\mu> 0$,
 \begin{align*}
[\Ric]_{\{y_i\}}
&=\diag\Big\{\frac{2}{\mu},\frac{2}{\mu},\frac{2}{\mu}\Big\}
\quad(\text{O'Neill type $\{11,1\}$})
\end{align*}

  \item [(3)] If $[h]=\left[\begin{matrix} 1 & 0 & 0 \\ 0 & 0 & 1 \\ 0 & 1 & 0 \end{matrix} \right]$,
 \begin{align*}
[\Ric]_{\{y_i\}}
&=\diag\{0,0,0\} \,\text{(flat)}
\quad(\text{O'Neill type $\{11,1\}$})
\end{align*}

\end{itemize}
\end{Thm}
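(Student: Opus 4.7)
The plan is to compute the Ricci operator directly from the data already assembled after Theorem~\ref{thm:G_I}, namely: for each of the three canonical metrics, an orthonormal basis $\{y_1,y_2,y_3\}$ of $\frakg_I$ together with its structure constants, and the signs $\epsilon_i=h(y_i,y_i)\in\{\pm1\}$ ((1): $\epsilon_2=-1$, others $+1$; (2): $\epsilon_3=-1$, others $+1$; (3): $\epsilon_3=-1$, others $+1$). I would proceed in the following four steps.

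First, I would use the Koszul formula
\[
2h(\nabla_uv,w)=h([u,v],w)+h([w,u],v)+h([w,v],u)
\]
to tabulate $\nabla_{y_i}y_j$ for each case. In cases (1) and (2) only two of the three brackets are nonzero and each involves a single $y_k$, so the $3\times3$ table of connection coefficients is very short. Case (3) is only marginally more work: the brackets are $[y_1,y_2]=-\frac{1}{\sqrt2}y_1$, $[y_3,y_1]=-\frac{1}{\sqrt2}y_1$, $[y_2,y_3]=\frac{1}{\sqrt2}(y_2+y_3)$, and the Koszul computation is still finite and explicit.

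Second, from these connection coefficients I would compute the Riemann endomorphisms $R_{y_iy_j}=\nabla_{[y_i,y_j]}-[\nabla_{y_i},\nabla_{y_j}]$ applied to each $y_k$, then compute $\ric(y_i,y_j)$ as the trace of $w\mapsto R_{y_iw}y_j$, using the orthonormal basis with the $\epsilon_k$ signs inserted as in the formula displayed just before Remark~\ref{Ric:ric} does not apply—I mean the displayed formula $\ric(u,v)=h(R_{uy_1}v,y_1)+h(R_{uy_2}v,y_2)-h(R_{uy_3}v,y_3)$ in Section~\ref{curvature-Lorentzian}, suitably permuted when the timelike index is not $3$. Finally, using $h(\Ric(y_i),y_j)=\ric(y_i,y_j)$, I read off $[\Ric]_{\{y_i\}}$ by multiplying the Ricci-tensor matrix on the right by $\diag(\epsilon_1,\epsilon_2,\epsilon_3)$.

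A useful organizing observation, which one can either verify a posteriori or use as a sanity check, is that the answers $\Ric=\mp\tfrac{2}{\mu}\id$ and $\Ric=0$ are precisely what one obtains for metrics of constant sectional curvature $\mp\tfrac{1}{\mu}$ and $0$ respectively (since in dimension three $\ric=(n-1)K\,h$ for constant-curvature metrics); this is geometrically natural because $G_I=\bbr^2\rtimes\bbr$ with the scalar action $e^t\cdot\id$ is, in essence, a solvable model of a space form. Because $\Ric$ is a scalar multiple of the identity in all three cases, it is automatically Lorentzian self-adjoint of O'Neill type $\{11,1\}$. The main obstacle is purely bookkeeping: the location of the timelike index shifts between cases, so the Koszul formula produces slightly different linear combinations each time and signs must be tracked carefully; once that is done, each case reduces to a short, finite computation.
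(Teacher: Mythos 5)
Your overall plan (Koszul formula for $\nabla$, then $R_{y_iy_j}$, then the trace formula for $\ric$, then pass to $\Ric$) is exactly the direct computation the paper relies on; the paper itself gives no written proof beyond listing, after Theorem~\ref{thm:G_I}, the orthonormal bases $\{y_i\}$ with their bracket relations, so in spirit you are doing the same thing. However, there is a concrete sign error in your setup for case (1). With $[h]=\diag(1,-1,\mu)$ in the basis $\{x,y,z\}$ one has $h(y,y)=-1$ and $h(z,z)=\mu>0$, so in the paper's basis $y_1=x$, $y_2=\tfrac{1}{\sqrt\mu}z$, $y_3=y$ the timelike vector is $y_3$, not $y_2$; indeed all the bases listed in Sections~\ref{sec:G_I}--\ref{sec:c<1} are arranged to respect the standing convention $h(y_1,y_1)=h(y_2,y_2)=1$, $h(y_3,y_3)=-1$ of Section~\ref{curvature-Lorentzian}, so no ``permutation'' of the formula $\ric(u,v)=h(R_{uy_1}v,y_1)+h(R_{uy_2}v,y_2)-h(R_{uy_3}v,y_3)$ is ever needed. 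Your assignment $\epsilon_2=-1$ in case (1) is not harmless bookkeeping: the brackets $[y_1,y_2]=-\tfrac{1}{\sqrt\mu}y_1$, $[y_2,y_3]=\tfrac{1}{\sqrt\mu}y_3$, $[y_3,y_1]=0$ together with a timelike $y_2$ are isometrically the same data as case (2) (relabel $e_1=y_1$, $e_2=y_3$, $e_3=y_2$), so carrying your signs through the Koszul and trace formulas would return $+\tfrac{2}{\mu}\,\id$ for case (1) instead of the correct $-\tfrac{2}{\mu}\,\id$.

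Two smaller points. The passage from $[\ric]$ to $[\Ric]$ is left multiplication by $J_{2,1}=\diag(\epsilon_1,\epsilon_2,\epsilon_3)$ (the paper's relation \eqref{Ric:ric} is $[\ric]=J_{2,1}[\Ric]$), not right multiplication; for these three cases the answer is a multiple of the identity so it does not matter, but in the later sections it would flip signs of off-diagonal entries in the third row/column. Your constant-curvature sanity check is fine as a check (and consistent with the sectional curvatures in Table~\ref{tab1}), but it cannot replace the computation, since constancy of the curvature is not known in advance. With the timelike index corrected to $3$ in all three cases, the rest of your plan goes through and reproduces the stated matrices, and the O'Neill type $\{11,1\}$ follows as you say because $\Ric$ is a scalar multiple of the identity.
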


\subsection{The case of $G_c$, $c>1$}\label{sec:c>1}

In this section, we consider the three-dimensional non-unimodular Lie group $G_c$ with $c>1$,
and then we will compute the moduli space $\aut(G_c)\bs\frakM(G_c)$ and the Ricci operator.

Recalling from Section~\ref{sec: Lie algebras} that
\begin{align*}
\aut(\frakg_c)= \left\{\left[
\begin{matrix} \beta-\alpha & -c\alpha & *\\
\alpha & \beta+\alpha & *\\ 0 & 0 & 1\end{matrix}\right]
\bigg|
\begin{array}{l}
\alpha,\beta,*\in\bbr,\\\beta^2 + (c-1)\alpha^2 \ne 0
\end{array}\right\},
\end{align*}
we obtain the following result.

\begin{Thm}\label{thm: c>1}
Any left invariant Lorentzian metric on $G_c$ with $c>1$ is equivalent up to
automorphism to a metric whose associated matrix is of the form
$$
\left[\begin{matrix}\mu & 0 & 0\\ 0 & 0 & 1\\ 0 & 1 & 0 \end{matrix}\right], \quad
\left[\begin{matrix} 1 & 1 & 0\\ 1 & \tau & 0\\ 0 & 0 & \mu \end{matrix}\right], \quad
\left[\begin{matrix} 1 & 1 & \hspace{8pt}0\\ 1 & \nu & \hspace{8pt}0\\ 0 & 0 & -\mu \end{matrix}\right]
$$
where $\mu>0$, $\tau<1$  and  $1<\nu\le c$.
\end{Thm}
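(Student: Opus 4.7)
The strategy parallels the proof of Theorem~\ref{thm:G_I}: parametrize a general $[h]\in\frakM(G_c)$ by its entries and use $\aut(\frakg_c)$ step by step to bring it into one of the three claimed normal forms. Write
\[
[h] = \begin{pmatrix} H & u \\ u^t & \mu_0 \end{pmatrix}
\]
with $H$ a symmetric $2\times 2$ block, $u\in\bbr^2$, and $\det[h]<0$. Every $\varphi\in\aut(\frakg_c)$ has block form $\begin{pmatrix} A & v\\ 0 & 1\end{pmatrix}$ with $A = \beta I + \alpha J$, $J = \begin{pmatrix}-1 & -c\\ 1 & 1\end{pmatrix}$, $\beta^2+(c-1)\alpha^2\neq 0$, and $v\in\bbr^2$ free, and a direct computation yields
\[
\varphi^t[h]\varphi = \begin{pmatrix} A^tHA & A^t(Hv+u)\\ (Hv+u)^tA & v^tHv+2u^tv+\mu_0 \end{pmatrix}.
\]
Since $c>1$, the relation $J^2=-(c-1)I$ shows that the group of admissible $A$'s is abelian and isomorphic to $\bbc^*$, acting on $\bbr^2$ by complex multiplication with respect to the complex structure induced by $J$.

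The plan is to first exploit the free translation $v$ to simplify the off-diagonal column, then use $A$ to bring the top-left block $H$ into canonical form, and finally rescale. When $H$ is invertible I would set $v=-H^{-1}u$, which kills the $(1,3)$ and $(2,3)$ entries and leaves the block-diagonal matrix $\diag(H,\,\mu_0 - u^tH^{-1}u)$; the Lorentzian signature $(2,1)$ then forces $\det H$ and $\mu_0 - u^tH^{-1}u$ to have opposite signs. Casework on $H$ gives three possibilities. Indefinite $H$ ($\det H<0$) should lead to the second normal form $\begin{pmatrix}1&1\\1&\tau\end{pmatrix}$ with $\tau<1$, paired with a positive $(3,3)$-entry $\mu>0$. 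Positive definite $H$ should lead to the third normal form $\begin{pmatrix}1&1\\1&\nu\end{pmatrix}$ with $1<\nu\le c$, paired with $-\mu<0$. Negative definite $H$ is excluded because then $[h]$ would have signature $(1,2)$ instead of $(2,1)$. When $H$ is singular it must have rank one (rank zero would violate $\det[h]\neq 0$); a separate reduction, using $v$ to eliminate the component of $u$ in the range of $H$ and then $A$ together with a scaling, produces the first normal form with $h_{23}=1$.

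The technical heart of the argument is the $\bbc^*$-orbit analysis on symmetric $2\times 2$ matrices $H$, particularly the bound $\nu\le c$ in the positive definite case. The map $H\mapsto A^tHA$ is nonlinear in $(\alpha,\beta)$ and mixes $H$ with $J^tH$, $HJ$, and $J^tHJ$; moreover, since $J$ is not orthogonal with respect to the standard inner product, the naive invariants $\tr H$ and $\det H$ are not preserved separately. I would handle this by passing to the complex structure, identifying the $\bbc^*$-orbit of a given $H$ via an invariant ratio in the resulting hermitian/quadratic decomposition, and then imposing the equation $A^tHA=\begin{pmatrix}1&1\\1&\nu\end{pmatrix}$ as a polynomial system in $(\alpha,\beta)$, reading off the range of $\nu$ from the resulting reality conditions. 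The upper bound $\nu\le c$ reflects the characteristic polynomial $\lambda^2-2\lambda+c$ of $\ad(z)$, with $c$ emerging as the extremal ratio attainable on positive definite orbits. Finally, pairwise inequivalence of the three normal forms is clear from the sign of the $(3,3)$-entry, the rank of $H$, and the value of the parameter $\tau$ or $\nu$, each of which is $\aut(\frakg_c)$-invariant within its respective family.
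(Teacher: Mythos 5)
Your overall route is the same as the paper's: use the translation part to kill the $(1,3)$ and $(2,3)$ entries when the upper-left block $H$ is nondegenerate, treat the rank-one case separately (giving the first normal form), and normalize $H$ by the restricted congruence action of $A=\beta I+\alpha J$. The setup is sound: the block formula for $\varphi^t[h]\varphi$ is correct, $J^2=-(c-1)I$ does make the admissible $A$'s a copy of $\bbc^*$ when $c>1$, negative definite $H$ is rightly excluded by signature, and the definiteness of $H$ correctly determines the sign of the residual $(3,3)$-entry. But the proposal stops exactly where the real work begins. The statement you must prove includes the parameter ranges, and you never actually establish (i) that every nondegenerate $H$ can be moved by some $A=\beta I+\alpha J$ to a matrix with equal $(1,1)$ and $(1,2)$ entries --- in the paper this is the solvability of the quadratic $(h_{11}-h_{12})z^2+((c-2)h_{11}+2h_{12}-h_{22})z-(c-1)(h_{11}-h_{12})=0$, which has a real root precisely because $c>1$ forces the product of the roots to be negative --- nor (ii) that in the positive definite case the representative can be chosen with $\nu\le c$. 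Point (ii) is the heart of the third normal form; the paper gets it from the explicit equivalence $\left[\begin{smallmatrix}1&1\\1&\nu\end{smallmatrix}\right]\sim\left[\begin{smallmatrix}1&1\\1&1+\frac{(c-1)^2}{\nu-1}\end{smallmatrix}\right]$ (take $\beta=0$, $\alpha=\pm 1/\sqrt{\nu-1}$), an involution on $(1,\infty)$ with fixed point $\nu=c$, which moves any $\nu>c$ into $(1,c]$. Your text replaces this with the announcement that you ``would'' impose a polynomial system and read off the range from reality conditions, plus a heuristic about the characteristic polynomial of $\ad(z)$; that is a plan, not a proof, of the one genuinely nontrivial inequality in the statement.

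A second, related soft spot: your closing claim that $\tau$, respectively $\nu$, is ``$\aut(\frakg_c)$-invariant within its respective family'' is false before the restriction $\nu\le c$ is imposed --- the involution above identifies $\nu$ with $1+(c-1)^2/(\nu-1)$ --- and verifying it after the restriction is exactly the orbit computation you skipped (the paper's analysis of the linear system in $x,y$ forcing $x=y=0$, then $\alpha\beta(\tau-c)=0$, etc.). For the theorem as stated, which asserts only the existence of a representative, pairwise inequivalence is not strictly needed; but the bound $\nu\le c$ is part of the statement, so this computation cannot be avoided. The rank-one case is likewise only sketched, though there the reduction is routine. In short: correct skeleton, essentially the paper's, but the decisive steps producing $\tau<1$ and $1<\nu\le c$ are missing.
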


\begin{proof}
Let $h$ be a left invariant Lorentzian metric on $G_c$ with $c>1$.
Then $[h]=[h_{ij}]$ is a symmetric and non-degenerate real $3\times 3$ matrix.

First suppose  $h_{11}h_{22}-h_{12}^2= 0$.
If $h_{12}\ne 0$, then $h_{11}\ne0, h_{22}\ne 0$.
Since $(h_{11}-h_{12})^2 + (c-1)h_{11}^2>0$, $B =${\footnotesize
$\left[\begin{matrix}
-h_{12} & -c\,h_{11} & 0\\
\hspace{8pt}h_{11} & 2h_{11}-h_{12} &0 \\
\hspace{8pt}0 & 0 & 1 \end{matrix}\right]$} $\in\aut(\frakg_c)$ and
$B^t [h] B = [h_{ij}']$ where $h_{11}'=h_{12}'=0$,  $h_{13}'\ne0$ and $h_{22}' \ne 0$.
Thus we may assume that $h_{12}=0$.
Then either $h_{11}\ne 0, h_{22}= 0$ or $h_{11}=0, h_{22}\ne 0$.
Suppose $h_{11}=0$ and $h_{22}\ne 0$. Since
$B =${\footnotesize
$\left[\begin{matrix}
-2 & -c & 0\\
\hspace{8pt}1 & \hspace{8pt}0 & 0 \\
\hspace{8pt}0 & \hspace{8pt}0 & 1 \end{matrix}\right]$} $\in\aut(\frakg_c)$ and
$B^t [h] B = [h_{ij}']$ where $h_{11}'=h_{22}\ne0$ and $h_{12}'=h_{22}'=0$,
we may assume that $h_{11}\ne 0$ and $h_{22}=0$.
Then we have $h_{23}\ne 0$,
$C =${\footnotesize
$\left[\begin{matrix}
\frac{1}{h_{23}} & 0 & -\frac{h_{13}}{h_{11}}\\
0 & \frac{1}{h_{23}} & \frac{h_{13}^2-h_{11}h_{33}}{2h_{11}h_{23}} \\
0 & 0 & 1 \end{matrix}\right]$} $\in\aut(\frakg_c)$ and
$C^t [h] C =$ {\footnotesize $\left[\begin{matrix}\mu & 0 & 0\\ 0 & 0 & 1\\ 0 & 1 & 0 \end{matrix}\right]$} where $\mu>0$.

Now suppose $h_{11}h_{22}-h_{12}^2\ne 0$.
First we may assume that $h_{13}=h_{23}=0$ because
$B =$
{\footnotesize $\left[\begin{matrix}
1 & 0 & \frac{h_{13}h_{22}-h_{12}h_{23}}{h_{12}^2-h_{11}h_{22}}\\
0 & 1 & \frac{h_{11}h_{23}-h_{12}h_{13}}{h_{12}^2-h_{11}h_{22}} \\
0 & 0 & 1 \end{matrix}\right]$} $\in\aut(\frakg_c)$ and
$B^t[h]B=[h']$ where $h'_{13}=h'_{23}=0.$

If $h_{11}\ne h_{12}$, then the quadratic equation
$$
(h_{11}-h_{12})z^2 + ((c-2)h_{11}+2h_{12}-h_{22})z - (c-1)(h_{11}-h_{12})=0
$$
has a real root, say $\beta$, because $-(c-1)(h_{11}-h_{12})^2<0$.
Then
$B =${\footnotesize
$\left[\begin{matrix}
\beta-1 & -c & 0\\
1 & \beta+1 &0 \\
0 & 0 & 1 \end{matrix}\right]$} $\in\aut(\frakg_c)$ and
$B^t[h]B=[h']$ where $h_{11}'=h_{12}'$ and $h_{13}'=h_{23}'=0$.
Thus we may also assume that $h_{11}=h_{12}$.
Since $\det[h_{ij}]=-h_{11}h_{33}(h_{11}-h_{22})<0$, we have
$h_{11}\ne 0$.
Let $B=\diag\{\frac{1}{\sqrt{|h_{11}|}},\frac{1}{\sqrt{|h_{11}|}},1\}\in\aut(\frakg_c)$.
Then
$B^t[h]B$ is of the form
$$
\left[\begin{matrix} 1 & 1 & 0\\ 1 & \tau & 0\\ 0 & 0 & \mu \end{matrix}\right], \quad
\left[\begin{matrix} -1 & -1 & 0\\ -1 & -\tau & 0\\ \hspace{8pt}0 & \hspace{8pt}0 & \mu \end{matrix}\right]
$$
where $\mu(\tau-1)<0$.

Now consider equivalences up to automorphism between the following three types of left invariant Lorentzian metrics :
\begin{align*}
\text{Type 1.}\, &\left[\begin{matrix}\mu & 0 & 0\\ 0 & 0 & 1\\ 0 & 1 & 0 \end{matrix}\right],  &
\text{Type 2.}\, &\left[\begin{matrix} 1 & 1 & 0\\ 1 & \tau & 0\\ 0 & 0 & \mu \end{matrix}\right],  &
\text{Type 3.}\, &\left[\begin{matrix} -1 & -1 & 0\\ -1 & -\tau & 0\\ \hspace{8pt}0 & \hspace{8pt}0 & \mu \end{matrix}\right]
\end{align*}

\begin{itemize}
  \item[(1)] Two different matrices of type 1 are not equivalent. And
every matrix of type 1 is not equivalent to that of type 2 or type 3.
  \item[(2)] Suppose $[h]=${\footnotesize $\left[\begin{matrix} 1 & 1 & 0\\ 1 & \tau & 0\\ 0 & 0 & \mu \end{matrix}\right]$},
  $[h']=${\footnotesize $\left[\begin{matrix} 1 & 1 & 0\\ 1 & \tau'& 0\\ 0 & 0 & \mu' \end{matrix}\right]$}
  and $[h]\sim[h']$. Then $B^t[h]B=[h']$ for some
$B =${\footnotesize
$\left[\begin{matrix}
\beta-\alpha & -c\alpha & x\\
\alpha & \beta+\alpha &y \\
0 & 0 & 1 \end{matrix}\right]$} $\in\aut(\frakg_c)$.
Thus we have
\begin{align}\label{eq-cge1}
&\beta x+ (\beta+\alpha(\tau-1))y=0,\tag{i}\\
&(\beta+\alpha-c\alpha)x + ((\beta+\alpha)\tau-c\alpha)y=0,\tag{ii}\\
&\beta^2-\alpha^2 + \tau \alpha^2 = 1,\tag{iii}\\
&\beta^2-\alpha^2 + \tau \alpha^2 + \alpha\beta(\tau-c)=1,\tag{iv}\\
&\tau \beta^2 + 2(\tau-c)\alpha\beta + (c^2-2c+\tau)\alpha^2=\tau',\notag\\
&\mu+x^2+2xy+\tau y^2=\mu' \notag
\end{align}
Considering the linear system of equations in the variables $x,y$ given by (i), (ii), the determinant of the coefficient matrix is $(\beta^2+(c-1)\alpha^2)(\tau-1)\ne0$. This implies $x=y=0$ and $\mu'=\mu$.
Using (iii) and (iv), we have $\alpha\beta(\tau-c)=0$.
\begin{itemize}
  \item[$\bullet$] If $\beta=0$, then $\tau>1$ and $\alpha=\pm\frac{1}{\sqrt{\tau-1}}$ and $\tau'=1+\frac{(c-1)^2}{\tau-1}$.
  \item[$\bullet$] If $\alpha=0$, then $\beta=\pm1$ and $\tau'=\tau$.
  \item[$\bullet$] If $\tau=c$, then $\beta^2+(c-1)\alpha^2=1$ and $\tau'=\tau=c$.
\end{itemize}
Hence we have
\begin{align*}
&\left[\begin{matrix} 1 & 1 & 0\\ 1 & \tau & 0\\ 0 & 0 & \mu \end{matrix}\right]
\sim \left[\begin{matrix} 1 & 1 & 0\\ 1 & \tau'& 0\\ 0 & 0 & \mu' \end{matrix}\right] \\
&\Longleftrightarrow \mu'=\mu,
\tau' = \left\{\begin{array}{ll}
\tau, &\text{if\,\,} \tau<1 \\
1+\frac{(c-1)^2}{\tau-1}, &\text{if\,\,} \tau>1
\end{array}
\right.
\end{align*}

  \item[(3)] Similarly,  we have
\begin{align*}
&\left[\begin{matrix} -1 & -1 & 0\\ -1 & -\tau & 0\\ \hspace{8pt}0 & \hspace{8pt}0 & \mu \end{matrix}\right]
\sim \left[\begin{matrix} -1 & -1 & 0\\ -1 & -\tau'& 0\\ 0 & 0 & \mu' \end{matrix}\right] \\
&\Longleftrightarrow \mu'=\mu,
\tau' = \left\{\begin{array}{ll}
\tau, &\text{if\,\,} \tau<1 \\
1+\frac{(c-1)^2}{\tau-1}, &\text{if\,\,} \tau>1
\end{array}
\right.
\end{align*}

  \item[(4)] Suppose $[h]=${\footnotesize$\left[\begin{matrix} -1 & -1 & 0\\ -1 & -\tau& 0\\ \hspace{8pt}0 & \hspace{8pt}0 & \mu \end{matrix}\right]$},
  $[h']=${\footnotesize$\left[\begin{matrix} 1 & 1 & 0\\ 1 & \tau' & 0\\ 0 & 0 & \mu' \end{matrix}\right]$}
  and $[h]\sim[h']$. Then there is $B =${\footnotesize
$\left[\begin{matrix}
\beta-\alpha & -c\alpha & x\\
\alpha & \beta+\alpha &y \\
0 & 0 & 1 \end{matrix}\right]$} $\in\aut(\frakg_c)$ such that $B^t[h]B=[h']$.
That is,
\begin{align*}
&\beta x+ (\beta+\alpha(\tau-1))y=0,\\
&(\beta+\alpha-c\alpha)x + ((\beta+\alpha)\tau-c\alpha)y=0,\\
&-\beta^2+\alpha^2 - \tau \alpha^2 = 1,\\
&-\beta^2+\alpha^2 - \tau \alpha^2 - \alpha\beta(\tau-c)=1,\\
&-\tau \beta^2 - 2(\tau-c)\alpha\beta - (c^2-2c+\tau)\alpha^2=\tau',\\
&\mu-x^2-2xy-\tau y^2 = \mu'
\end{align*}
This implies $x=y=0$, $\mu'=\mu$ and $\alpha\beta(\tau-c)=0$.
\begin{itemize}
  \item[$\bullet$] If $\beta=0$, then $\tau<1$ and $\alpha=\pm\frac{1}{\sqrt{1-\tau}}$ and $\tau'=1-\frac{(c-1)^2}{1-\tau}$.
  \item[$\bullet$] If $\alpha=0$, then $\beta^2+1=0$, which is a contradiction.
  \item[$\bullet$] If $\tau=c$, then $-\beta^2-(c-1)\alpha^2=1$, which is a contradiction.
\end{itemize}
Hence we have
\begin{align*}
&\left[\begin{matrix} -1 & -1 & 0\\ -1 & -\tau & 0\\ \hspace{8pt}0 & \hspace{8pt}0 & \mu \end{matrix}\right]
\sim \left[\begin{matrix} 1 & 1 & 0\\ 1 & \tau'& 0\\ 0 & 0 & \mu' \end{matrix}\right] \\
&\Longleftrightarrow \tau<1, \, \mu'=\mu, \, \tau' = 1-\frac{(c-1)^2}{1-\tau}
\end{align*}

  \item[(5)] Suppose $[h]=${\footnotesize$\left[\begin{matrix} -1 & -1 & 0\\ -1 & -\tau& 0\\ \hspace{8pt}0 & \hspace{8pt}0 & -\mu \end{matrix}\right]$} with $1<\tau\le c$ and $\mu>0$.
Let
$$
y_1=x, \,\, y_2=\frac{1}{\sqrt{\rho-1}}(x-y), \,\, y_3=\frac{1}{\sqrt{\mu}}z
$$
Then $\calB=\{y_1,y_2,y_3\}$ is a basis of the Lie algebra of $G_c$ such that
$[h]_{\calB} = \diag\{-1,-1,-1\}$, which is a contradiction.
\end{itemize}
Hence the proof is completed.
\end{proof}

For each left invariant Lorentzian metric $h$ on $G_c$ with $c>1$ whose associated matrix is given in Theorem~\ref{thm: c>1}, we obtain an orthonormal basis $\{y_i\}$ as follows:
\begin{itemize}
  \item [(1)] If $[h]=\left[\begin{matrix} \mu & 0 & 0\\ 0 & 0 & 1\\ 0 & 1 & 0 \end{matrix} \right]$ with $\mu> 0$,
\begin{align*}
  &y_1=\frac{1}{\sqrt{\mu}}x, \,
  y_2=\frac{1}{\sqrt{2}}(y+z), \,
  y_3=\frac{1}{\sqrt{2}}(y-z);\\
  &[y_1,y_2]=[y_3,y_1]=-\frac{1}{2\sqrt{\mu}}(y_2+y_3), \,
  [y_2,y_3]=-c\mu Y_1+\sqrt{2}(y_2+y_3).
\end{align*}

  \item [(2)] If $[h]=\left[\begin{matrix} 1 & 1 & 0\\ 1 & \tau & 0\\ 0 & 0 & \mu \end{matrix} \right]$ with $\mu> 0$, $\tau<1$,
\begin{align*}
&y_1=\frac{1}{\sqrt{\mu}}z, \,
 y_2=x, \,
 y_3=\frac{1}{\sqrt{1-\tau}}(x-y);\\
&[y_1,y_2]=\frac{1}{\sqrt{\mu}}y_2- \frac{\sqrt{1-\tau}}{\sqrt{\mu}}y_3, \,
[y_2,y_3]=0, \,
[y_3,y_1]=\frac{1-c}{\sqrt{(1-\tau)\mu}}y_2-\frac{1}{\sqrt{\mu}}y_3.
\end{align*}

  \item [(3)] If $[h]=${\footnotesize$\left[\begin{matrix} 1 & 1 & \hspace{8pt}0\\ 1 & \nu & \hspace{8pt}0\\ 0 & 0 & -\mu \end{matrix} \right]$} with $\mu> 0$, $1<\nu\le c$,
\begin{align*}
&y_1=x, \, y_2=\frac{1}{\sqrt{\nu-1}}(x-y), \,y_3=\frac{1}{\sqrt{\mu}}z;\\
&[y_1,y_2]=0, \,
[y_2,y_3]=\frac{1-c}{(\nu-1)\mu}y_1-\frac{1}{\sqrt{\mu}}y_2, \,
[y_3,y_1]=\frac{1}{\sqrt{\mu}}y_1-\frac{\sqrt{\nu-1}}{\sqrt{\mu}}y_2.
\end{align*}\

\end{itemize}

With respect to the orthonormal basis $\{y_i\}$, we have the following:

\begin{Thm}\label{thm: c>1-curvature}
The Ricci operator $\Ric$ of the metric $h$ on $G_c$ with $c>1$ is expressed as follows:
\begin{itemize}
  \item [(1)] If $[h]=\left[\begin{matrix} \mu & 0 & 0\\ 0 & 0 & 1\\ 0 & 1 & 0 \end{matrix} \right]$ with $\mu> 0$,
\begin{align*}
[\Ric]_{\{y_i\}}
&= \left[\begin{matrix} -\frac{c^2\mu}{2} & 0 & \hspace{8pt}0 \\ \hspace{8pt}0 & \frac{c(c\mu+1)}{2} & -\frac{c}{2} \\
\hspace{8pt}0 & \frac{c}{2} & \frac{c(c\mu-1)}{2} \end{matrix} \right] \quad(\text{O'Neill type $\{21\}$})
\end{align*}

  \item [(2)] If $[h]=\left[\begin{matrix} 1 & 1 & 0\\ 1 & \tau & 0\\ 0 & 0 & \mu \end{matrix} \right]$ with $\mu> 0$, $\tau<1$,
 \begin{align*}
[\Ric]_{\{y_i\}}
&= \left[\begin{matrix} \frac{\tau^2+(4-2c)\tau+(c^2-4)}{2(1-\tau)\mu} & 0 & 0 \\
0 &\hspace{-10pt} \frac{\tau^2+2\tau-(c^2-2c+4)}{2(1-\tau)\mu} & -\frac{c-\tau}{\mu\sqrt{1-\tau}} \\
0 & \frac{c-\tau}{\mu\sqrt{1-\tau}} &\hspace{-10pt}
  -\frac{\tau^2-6\tau-(c^2-2c-4)}{2(1-\tau)\mu} \end{matrix} \right]
\end{align*}
$\Ric$ is of O'Neill type $\{11,1\}$, $\{1z\bar{z}\}$, $\{21\}$ when $(c+\tau)^2-4c >0, <0, =0$, respectively.

  \item [(3)] If $[h]=\left[\begin{matrix} 1 & 1 & \hspace{8pt}0\\ 1 & \nu & \hspace{8pt}0\\ 0 & 0 & -\mu \end{matrix} \right]$ with $\mu> 0$, $1<\nu\le c$,
 \begin{align*}
[\Ric]_{\{y_i\}}
&= \left[\begin{matrix} -\frac{\nu^2+2\nu-(c^2-2c+4)}{2(\nu-1)\mu} & \frac{c-\nu}{\mu\sqrt{\nu-1}} & 0 \\
\frac{c-\nu}{\mu\sqrt{\nu-1}} &\hspace{-10pt} -\frac{\nu^2-6\nu-(c^2-2c-4)}{2(\nu-1)\mu} & 0 \\
0 & 0 &\hspace{-10pt} \frac{(\nu-c)^2+4(\nu-1)}{2(\nu-1)\mu} \end{matrix} \right]\\ &\quad(\text{O'Neill type $\{11,1\}$})
\end{align*}

\end{itemize}
\end{Thm}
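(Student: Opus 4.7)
The plan is to compute, for each of the three canonical representatives of $[h]$ given in Theorem~\ref{thm: c>1}, the Ricci operator relative to the orthonormal basis $\{y_1,y_2,y_3\}$ exhibited just before the theorem statement, and then use Lemmas~\ref{lemma:OT1} and \ref{lemma:OT2} to identify the O'Neill type. The orthonormal frames and their Lie brackets are already tabulated, so the input data is explicit: $h(y_1,y_1)=h(y_2,y_2)=1$, $h(y_3,y_3)=-1$, together with the nine brackets $[y_i,y_j]$.

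The computation for each case proceeds in four mechanical steps. First, I apply Koszul's formula
\[
2h(\nabla_{y_i}y_j,y_k)=h([y_i,y_j],y_k)+h([y_k,y_i],y_j)+h([y_k,y_j],y_i)
\]
to extract $\nabla_{y_i}y_j$ as a linear combination of the $y_k$'s; the signs coming from $h(y_3,y_3)=-1$ must be tracked carefully. Second, I form $R_{y_i y_j}=\nabla_{[y_i,y_j]}-[\nabla_{y_i},\nabla_{y_j}]$ and apply it to basis vectors. Third, I compute the Ricci tensor via
\[
\ric(y_i,y_j)=h(R_{y_i y_1}y_j,y_1)+h(R_{y_i y_2}y_j,y_2)-h(R_{y_i y_3}y_j,y_3),
\]
and finally I pass to the Ricci operator through $[\Ric]_{\{y_i\}}=J_{2,1}[\ric]_{\{y_i\}}$ (equivalently, negating the third row), so that the resulting matrix is $h$-self adjoint in Lorentzian form. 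In each of the three cases only one or two of the three brackets is nonzero in a non-trivial way, which keeps the algebra manageable.

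For the identification of the O'Neill type, cases (1) and (3) follow directly from Lemmas~\ref{lemma:OT1} and \ref{lemma:OT2}: in (1) the matrix $[\Ric]$ is already of the form appearing in Lemma~\ref{lemma:OT1} and a quick check shows $D=(b-d)^2-4r^2=0$, so the operator is of O'Neill type $\{21\}$; in (3) the nonzero off-diagonal block sits in the $(1,2)$-positions, so Lemma~\ref{lemma:OT2} applies and the operator is of type $\{11,1\}$. Case (2) is where the classification depends on a parameter: applying Lemma~\ref{lemma:OT1} requires computing $D$. A direct calculation gives $b-d=\frac{(c-\tau)(2-c-\tau)}{(1-\tau)\mu}$ and $r=\frac{c-\tau}{\mu\sqrt{1-\tau}}$, so
\[
D=\frac{(c-\tau)^2\bigl[(2-c-\tau)^2-4(1-\tau)\bigr]}{(1-\tau)^2\mu^2}=\frac{(c-\tau)^2\bigl[(c+\tau)^2-4c\bigr]}{(1-\tau)^2\mu^2}.
\]
Since $\tau<1<c$, the factor $(c-\tau)^2$ is strictly positive, so the sign of $D$ matches that of $(c+\tau)^2-4c$, yielding the trichotomy asserted in the theorem.

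The main obstacles are not conceptual but bookkeeping: the $y_3$-sign flips coming from the Lorentzian signature propagate through Koszul, and in case (2) the Ricci entries are rational functions of $\tau,c,\mu$ whose simplification must be done carefully in order to recognise the factorisation $\tau^2-2\tau-c^2+2c=(c-\tau)(2-c-\tau)$ that makes $D$ collapse to the clean form $(c-\tau)^2[(c+\tau)^2-4c]$. Once this factorisation is spotted, the O'Neill types drop out immediately from Lemma~\ref{lemma:OT1}. The calculation is of the type suited to a symbolic algebra check (as the authors note they used MATHEMATICA), and no further theoretical input is needed.
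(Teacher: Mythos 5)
Your proposal matches the paper's (largely implicit) argument: the paper gives no separate proof for this theorem beyond the tabulated orthonormal frames and brackets, the standard Koszul--curvature--Ricci computation (done symbolically, as the authors note), and the identification of the O'Neill types via Lemmas~\ref{lemma:OT1} and \ref{lemma:OT2}. Your explicit verification of the discriminant in case (2), $D=\frac{(c-\tau)^2\left[(c+\tau)^2-4c\right]}{(1-\tau)^2\mu^2}$ with $c-\tau>0$, is correct and yields exactly the stated trichotomy, so the proposal is correct and takes essentially the same route as the paper.
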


\subsection{The case of $G_1$}\label{sec:c=1}

In this section, we consider the three-dimensional non-unimodular Lie group $G_1$,
and then we will compute the moduli space $\aut(G_1)\bs\frakM(G_1)$ and the Ricci operator.

Instead of the basis $\{x,y,z\}$ for $\frakg_1$ given in Section~\ref{sec: Lie algebras},
we choose a new basis $\{x_1,x_2,x_3\}$ for $\frakg_1$
by putting $x_1=-x+y, \,\, x_2=-x+2y,\,\,x_3=z$.
Then they satisfy
$$
[x_1,x_2]=0, \quad [x_3,x_1]=x_1, \quad [x_3,x_2]=x_1+x_2.
$$
With respect to this new basis, the Lie group $\aut(\frakg_1)$ is isomorphic to
$$
 \left\{\left[ \begin{matrix} \gamma&\delta& *\\0&\gamma& *\\ 0 & 0 & 1\end{matrix}\right]
\bigg|  \begin{array}{l} \gamma,\delta,*\in\bbr, \gamma\ne0\end{array} \right\}.
$$
In fact, given $\gamma,\delta$,
$$
\begin{array}{l}
\left[\begin{matrix}-2&-1&0\\\hspace{8pt}1&\hspace{8pt}1&0\\ \hspace{8pt}0&\hspace{8pt}0&1\end{matrix}\right]^{-1}
\left[ \begin{matrix} \gamma&\delta& *\\0&\gamma& *\\ 0 & 0 & 1\end{matrix}\right]
\left[\begin{matrix}-2&-1&0\\\hspace{8pt}1&\hspace{8pt}1&0\\ \hspace{8pt}0&\hspace{8pt}0&1\end{matrix}\right]
=\left[\begin{matrix} \gamma-\delta & -\delta & * \\\delta & \gamma+\delta & *\\ 0 & 0 &
1\end{matrix}\right]
\end{array}
$$
where $[\id]_{\{x,y,z\},\{x_1,x_2,x_3\}} = \left[\begin{matrix}-2&-1&0\\\hspace{8pt}1&\hspace{8pt}1&0\\ \hspace{8pt}0&\hspace{8pt}0&1\end{matrix}\right]$.

\begin{Thm}\label{thm: c=1}
Any left invariant Lorentzian metric on $G_1$ is equivalent up
to automorphism to a metric whose associated matrix is of the form
$$
\left[\begin{matrix}-2&-1&0\\\hspace{8pt}1&\hspace{8pt}1&0\\ \hspace{8pt}0&\hspace{8pt}0&1\end{matrix}\right]^{-1}
M \left[\begin{matrix}-2&-1&0\\\hspace{8pt}1&\hspace{8pt}1&0\\ \hspace{8pt}0&\hspace{8pt}0&1\end{matrix}\right]
$$
where $M$ is of the form
\begin{align*}
&\left[\begin{matrix}0 & 0 & 1\\ 0 & \mu & 0\\ 1 & 0 & 0 \end{matrix}\right],
&& \left[\begin{matrix}\mu & 0 & 0\\ 0 & 0 & 1\\ 0 & 1 & 0 \end{matrix}\right],
&&\left[\begin{matrix} 1 & \hspace{8pt}0 & 0\\ 0 & -\nu & 0\\ 0 & \hspace{8pt}0 & \mu \end{matrix}\right],
&&\left[\begin{matrix} 1 & 0 & \hspace{8pt}0\\ 0 & \nu & \hspace{8pt}0\\ 0 & 0 & -\mu \end{matrix}\right],\\
&\left[\begin{matrix} -1 & 0 & 0\\ \hspace{8pt}0 & \nu & 0\\ \hspace{8pt}0 & 0 & \mu \end{matrix}\right],
&&\left[\begin{matrix} 0 & 1 & 0\\ 1 & 0 & 0\\ 0 & 0 & \mu \end{matrix}\right],
&&\left[\begin{matrix} \hspace{8pt}0 & -1 & 0\\ -1 & \hspace{8pt}0 & 0\\ \hspace{8pt}0 & \hspace{8pt}0 & \mu \end{matrix}\right]
\end{align*}
where $\mu>0$ and $\nu>0$.
\end{Thm}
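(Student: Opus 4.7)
The plan is to mimic the strategy of Theorems~\ref{thm:G_I} and \ref{thm: c>1}, but work in the new basis $\{x_1,x_2,x_3\}$ in which $\aut(\frakg_1)$ becomes the upper-triangular group
$$
\left\{B=\left[\begin{matrix}\gamma & \delta & p\\ 0 & \gamma & q\\ 0 & 0 & 1\end{matrix}\right]\,\bigg|\,\gamma\ne 0,\ \delta,p,q\in\bbr\right\}.
$$
Let $M=[m_{ij}]$ be the (symmetric, with $\det M<0$) matrix of $h$ in the basis $\{x_1,x_2,x_3\}$. Two such matrices $M,M'$ are equivalent up to automorphism iff $M'=B^tMB$ for some such $B$. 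Once normal forms for $M$ are found, conjugation by the change-of-basis matrix $Q$ yields the forms $Q^{-1}MQ$ in the original basis $\{x,y,z\}$ displayed in the statement. I split on the behaviour of the upper-left $2\times 2$ block $A=\bigl[\begin{smallmatrix}m_{11} & m_{12}\\ m_{12} & m_{22}\end{smallmatrix}\bigr]$.

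\emph{Case 1: $\det A\ne 0$.} Then the translations $(p,q)$ kill $m_{13}$ and $m_{23}$, because the linear map $(p,q)\mapsto(pm_{11}+qm_{12}+m_{13},\,pm_{12}+qm_{22}+m_{23})$ is invertible. This reduces $M$ to $\diag(A,m_{33})$ with $m_{33}\ne 0$. The remaining $(\gamma,\delta)$-action is congruence of $A$ by $\bigl[\begin{smallmatrix}\gamma & \delta\\ 0 & \gamma\end{smallmatrix}\bigr]$, which sends $m_{11}\mapsto\gamma^2 m_{11}$. When $m_{11}\ne 0$ I choose $\delta=-\gamma m_{12}/m_{11}$ to kill $m_{12}$, then choose $\gamma$ to normalize $|m_{11}|=1$, arriving at $A=\diag(\pm 1,\pm\nu)$ with $\nu>0$. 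When $m_{11}=0$ (forcing $m_{12}\ne 0$), $\delta$ is used to kill $m_{22}$ and $\gamma$ to normalize $|m_{12}|=1$, giving $A=\pm\bigl[\begin{smallmatrix}0 & 1\\ 1 & 0\end{smallmatrix}\bigr]$. The Lorentzian signature requirement $(+,+,-)$ rules out the forbidden sign combinations on the diagonal and yields exactly the five families (3), (4), (5), (6), (7) of the statement, each with the stated sign conditions on $\mu,\nu$.

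\emph{Case 2: $\det A=0$.} If $A=0$ then $\det M=0$, impossible; so $A$ has rank one. If $m_{11}\ne 0$, using $\delta=-\gamma m_{12}/m_{11}$ to kill $m_{12}$ automatically kills $m_{22}$ as well (via $\det A=0$); then $p$ kills $m_{13}$, and the resulting $N_{23}$ becomes $\gamma\beta$ with $\beta:=m_{23}-m_{12}m_{13}/m_{11}$ independent of $q$. The constraint $\det M=-m_{11}\beta^2<0$ forces $m_{11}>0$ and $\beta\ne 0$, and then $q$ is used to kill $N_{33}$ while $\gamma=1/\beta$ normalizes, yielding form (2) with $\mu=m_{11}/\beta^2>0$. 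If $m_{11}=0$ then necessarily $m_{12}=0$ and $m_{22}\ne 0$; $\det M=-m_{13}^2m_{22}<0$ forces $m_{22}>0$ and $m_{13}\ne 0$, and an analogous sequence of translations and one scaling ($\gamma=1/m_{13}$) gives form (1) with $\mu=m_{22}/m_{13}^2>0$.

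The main obstacle will be the pairwise inequivalence verification for the seven families. The rank of $A$ and the sign of $m_{11}$ are invariants of the congruence, because $\gamma^2>0$, and these already separate most pairs. The more delicate comparisons — between (3) and (5), and between (6) and (7), where the top-left blocks differ only by an overall sign — are handled by writing out $B^tMB=M'$ entry by entry and observing that the fixed $(3,3)$-entry $B_{33}=1$ (together with $\gamma^2>0$) prevents any sign flip that would be required to interchange them. Within each family, the continuous parameters $\mu$ and $\nu$ arise as the residual invariants after the single $\gamma$-scaling is used up; they are recovered as explicit ratios of entries of $M$ and so are genuine moduli.
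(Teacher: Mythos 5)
Your reduction is correct and follows essentially the same route as the paper's proof: work in the basis $\{x_1,x_2,x_3\}$, where $\aut(\frakg_1)$ becomes the upper-triangular group, split on the vanishing of $h_{11}h_{22}-h_{12}^2$, use the third-column translations to clear the $(1,3)$-, $(2,3)$- (or $(3,3)$-) entries and the $(\gamma,\delta)$-block to normalize the upper-left $2\times 2$ block, with the signature condition ruling out the impossible sign patterns and yielding exactly the seven families. The paper is just as brief as you are on pairwise inequivalence (which the statement being proved does not actually require), so there is no gap.
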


\begin{proof}
Let $\{x_1,x_2,x_3\}$ be the basis for $\frakg_c$ given by
$$x_1=-x+y, \,\, x_2=-x+2y,\,\,x_3=z$$
Then 
$$[x_1,x_2]=0, \quad [x_3,x_1]=x_1, \quad [x_3,x_2]=x_1+x_2$$ and
the Lie group $\aut(\frakg_1)$ is isomorphic to {\footnotesize $
\left\{\left[\begin{matrix} \gamma&\delta& *\\0&\gamma& *\\ 0 & 0 &
1\end{matrix}\right] \bigg|\,\, \gamma,\delta,*\in\bbr,
\gamma\ne 0 \right\}$}.

Let $h$ be a left invariant Lorentzian metric on $G_1$.
Then with respect to the basis $\{x_1,x_2,x_3\}$,
$[h]=[h_{ij}]$ is a symmetric and non-degenerate real $3\times 3$ matrix.

First assume that  $h_{11}h_{22}-h_{12}^2= 0$.
If $h_{12}\ne 0$, then $h_{11}\ne0, h_{22}\ne 0$.
Let  $B =${\footnotesize
$\left[\begin{matrix}
h_{11} & -h_{12} & 0\\
0 & \hspace{8pt}h_{11} & 0 \\
0 & \hspace{8pt}0 & 1 \end{matrix}\right]$} $\in\aut(\frakg_c)$. Then
 $B^t [h] B =[h_{ij}']$ where $h_{12}'=0$.
Thus we may assume that $h_{12}=0$.
Then either $h_{11}=0, h_{22}\ne 0$ or $h_{11}\ne 0, h_{22}= 0$.
When $h_{11}=0$ and $h_{22}\ne 0$, $\det([h])=-h_{22}h_{13}^2<0$ and so
we have $h_{22}>0$ and $h_{13}\ne 0$.
Let  $B =${\footnotesize
$\left[\begin{matrix}
\frac{1}{h_{13}} & 0 & 0\\
0 & \frac{1}{h_{13}}& -\frac{h_{23}}{h_{22}} \\
0 & 0 & 1 \end{matrix}\right]$} $\in\aut(\frakg_c)$. Then
 $B^t [h] B =$ {\footnotesize$\left[\begin{matrix}0 & 0 & 1\\ 0 & \mu & 0\\ 1 & 0 & \nu  \end{matrix}\right]$}.
Let $C =${\footnotesize
$\left[\begin{matrix}
1 & 0 & -\frac{\nu}{2}\\
0 & 1 & 0 \\
0 & 0 & 1 \end{matrix}\right]$} $\in\aut(\frakg_c)$. Then
 $C^tB^t [h] BC =$ {\footnotesize$\left[\begin{matrix}0 & 0 & 1\\ 0 & \mu & 0\\ 1 & 0 & 0  \end{matrix}\right]$}  where $\mu>0$.

Similarly, when $h_{11}\ne 0$ and $h_{22}=0$, then $h_{23}\ne 0$,
$B =${\footnotesize
$\left[\begin{matrix}
\frac{1}{h_{23}} & 0 & -\frac{h_{13}}{h_{11}}\\
0 & \frac{1}{h_{23}} & \hspace{8pt}0 \\
0 & 0 & \hspace{8pt}1 \end{matrix}\right]$} $\in\aut(\frakg_c)$ and
$B^t [h] B =$ {\footnotesize $\left[\begin{matrix}\mu & 0 & 0\\ 0 & 0 & 1\\ 0 & 1 & \nu \end{matrix}\right]$}.
And let $C =${\footnotesize
$\left[\begin{matrix}
1 & 0 & \hspace{8pt}0\\
0 & 1 & -\frac{\nu}{2} \\
0 & 0 & \hspace{8pt}1 \end{matrix}\right]$} $\in\aut(\frakg_c)$. Then
 $C^tB^t [h] BC =$ {\footnotesize$\left[\begin{matrix}\mu & 0 & 0\\ 0 & 0 & 1\\ 0 & 1 & 0  \end{matrix}\right]$}  where $\mu>0$.

Next suppose that $h_{11}h_{22}-h_{12}^2\ne 0$.
Then we may assume that $h_{13}=h_{23}=0$ because
$B =$
{\footnotesize $\left[\begin{matrix}
1 & 0 & \frac{h_{13}h_{22}-h_{12}h_{23}}{h_{12}^2-h_{11}h_{22}}\\
0 & 1 & \frac{h_{11}h_{23}-h_{12}h_{13}}{h_{12}^2-h_{11}h_{22}} \\
0 & 0 & 1 \end{matrix}\right]$} $\in\aut(\frakg_c)$ and
$B^t[h]B=[h']$ where $h'_{13}=h'_{23}=0.$

If $h_{11}\ne 0$, then
$B=$ {\footnotesize $\left[\begin{matrix} h_{11} & -h_{12} & 0 \\ 0 & \hspace{8pt}h_{11} & 0 \\0 & \hspace{8pt}0 & 1 \end{matrix} \right]$} $\in\aut(\frakg_c)$ and $B^t [h] B = \diag\{d_1,d_2,d_3\}$ where $d_1d_2d_3<0$.
Since only one of $d_1, d_2, d_3$ is negative,
either $d_1 d_2<0, \lambda>0$ or $d_1>0, d_2>0, \lambda<0$.
Let $C=\diag\{\frac{1}{\sqrt{|d_1|}}, \frac{1}{\sqrt{|d_1|}}, 1 \}\in\aut(\frakg_c)$.
Then $C^tB^t[h]BC$ is of the form
$$
\left[\begin{matrix} 1 & \hspace{8pt}0 & 0\\ 0 & -\nu & 0\\ 0 & \hspace{8pt}0 & \mu \end{matrix}\right], \quad
\left[\begin{matrix} 1 & 0 & \hspace{8pt}0\\ 0 & \nu & \hspace{8pt}0\\ 0 & 0 & -\mu \end{matrix}\right], \quad
\left[\begin{matrix} -1 & 0 & 0\\ \hspace{8pt}0 & \nu & 0\\ \hspace{8pt}0 & 0 & \mu \end{matrix}\right]
$$
where $\mu >0$ and $\nu>0$.

If $h_{11}=0$, then $\det([h])=-h_{33}(h_{12})^2$. Thus $h_{33}>0$ and $h_{12}\ne0$.
Let $B=\diag\{\frac{1}{\sqrt{|h_{12}|}}, \frac{1}{\sqrt{|h_{12}|}}, 1\}\in\aut(\frakg_c)$.
Then
$B^t[h]B=$ {\footnotesize $\left[\begin{matrix} \hspace{8pt}0 & \pm1 & 0\\ \pm1 & \hspace{8pt}\nu & 0\\ \hspace{8pt}0 & \hspace{8pt}0 & \mu \end{matrix}\right]$}.
Let $C =${\footnotesize
$\left[\begin{matrix}
1 & \mp\frac{\nu}{2} & 0\\
0 & \hspace{8pt}1 & 0 \\
0 & \hspace{8pt}0 & 1 \end{matrix}\right]$} $\in\aut(\frakg_c)$. Then
$C^tB^t [h] BC$ is of the form
$$
\left[\begin{matrix}0 & 1 & 0\\ 1 & 0& 0\\ 0 & 0 & \mu  \end{matrix}\right], \quad
\left[\begin{matrix}\hspace{8pt}0 & -1 & 0\\ -1 & \hspace{8pt}0& 0\\ \hspace{8pt}0 & \hspace{8pt}0 & \mu  \end{matrix}\right]
$$
where $\mu>0$.
Finally it is easy to see that any four such distinct matrices are not equivalent.
\end{proof}

Let $Q=${$\left[\begin{matrix}-2&-1&0\\\hspace{8pt}1&\hspace{8pt}1&0\\ \hspace{8pt}0&\hspace{8pt}0&1\end{matrix}\right]$}.
For each left invariant Lorentzian metric $h$ on $G_1$ whose associated matrix is given in Theorem~\ref{thm: c=1}, we obtain an orthonormal basis $\{y_i\}$ as follows:
\begin{itemize}
  \item [(1)] If $[h]=Q^{-1}${$\left[\begin{matrix} 0 & 0 & 1\\ 0 & \mu & 0\\ 1 & 0 & 0 \end{matrix} \right]$}$Q$ with $\mu>0$,
\begin{align*}
  &y_1=\frac{1}{\sqrt{\mu}}x_2, \,
  y_2=\frac{1}{\sqrt{2}}(x_1+x_3), \,
  y_3=\frac{1}{\sqrt{2}}(x_1-x_3);\\
 &[y_1,y_2]=[y_3,y_1]=-\frac{1}{\sqrt{2}}y_1-\frac{1}{2\sqrt{\mu}}(y_2+y_3), \,
  [y_2,y_3]=\frac{1}{\sqrt{2}}(y_2+y_3).
\end{align*}

  \item [(2)] If $[h]=Q^{-1}${$
  \left[\begin{matrix} \mu & 0 & 0\\ 0 & 0 & 1\\ 0 & 1 & 0 \end{matrix} \right]$}$Q$  with $\mu>0$,
\begin{align*}
  &y_1=\frac{1}{\sqrt{\mu}}X_1, \,
  y_2=\frac{1}{\sqrt{2}}(X_2+X_3), \,
  y_3=\frac{1}{\sqrt{2}}(X_2-X_3);\\
  &[y_1,y_2]=[y_3,y_1]=-\frac{1}{\sqrt{2}} Y_1, \,
  [y_2,y_3]= \sqrt{\mu}Y_1+\frac{1}{\sqrt{2}}(Y_2+Y_3).
\end{align*}

  \item [(3)] If $[h]=Q^{-1}${$
  \left[\begin{matrix} 1 & \hspace{8pt}0 & 0\\ 0 & -\nu & 0\\ 0 & \hspace{8pt}0 & \mu \end{matrix} \right]$}$Q$
  with $\mu>0$, $\nu>0$,
\begin{align*}
&y_1=x_1, \, y_2=\frac{1}{\sqrt{\mu}}x_3, \, y_3=\frac{1}{\sqrt{\nu}}x_2;\\
&[y_1,y_2]=-\frac{1}{\sqrt{\mu}}y_1, \,
[y_2,y_3]=\frac{1}{\sqrt{\mu\nu}}y_1 + \frac{1}{\sqrt{\mu}}y_3, \,
[y_3,y_1]=0.
\end{align*}

  \item [(4)] If $[h]=Q^{-1}${$
  \left[\begin{matrix} 1 & 0 & \hspace{8pt}0\\ 0 & \nu & \hspace{8pt}0\\ 0 & 0 & -\mu \end{matrix} \right]$}$Q$  with $\mu>0$, $\nu>0$,
\begin{align*}
&y_1=x_1, \,  y_2=\frac{1}{\sqrt{\nu}}x_2, \,  y_3=\frac{1}{\sqrt{\mu}}x_3;\\
&[y_1,y_2]=0, \,
[y_2,y_3]=-\frac{1}{\sqrt{\mu\nu}}y_1 - \frac{1}{\sqrt{\mu}}y_2, \,
[y_3,y_1]=\frac{1}{\sqrt{\mu}}y_1.
\end{align*}

  \item [(5)] If $[h]=Q^{-1}${$
  \left[\begin{matrix} -1 & 0 & 0\\ \hspace{8pt}0 & \nu & 0\\ \hspace{8pt}0 & 0 & \mu \end{matrix} \right]$}$Q$  with $\mu>0$, $\nu>0$,
\begin{align*}
&y_1=\frac{1}{\sqrt{\nu}}x_2, \,
  y_2=\frac{1}{\sqrt{\mu}}x_3, \,
  y_3=x_1;\\
  &[y_1,y_2]=- \frac{1}{\sqrt{\mu}}y_1 -\frac{1}{\sqrt{\mu\nu}}y_3, \,
[y_2,y_3]= \frac{1}{\sqrt{\mu}}y_3, \,
[y_3,y_1]= 0.
\end{align*}

 \item [(6)] If $[h]=Q^{-1}${$
 \left[\begin{matrix} 0 & 1 & 0\\ 1 & 0 & 0\\ 0 & 0 & \mu \end{matrix} \right]$}$Q$ with $\mu>0$,
\begin{align*}
  &y_1=\frac{1}{\sqrt{\mu}}x_3, \,
  y_2=\frac{1}{\sqrt{2}}(x_1+x_2), \,
  y_3=\frac{1}{\sqrt{2}}(x_1-x_2);\\
  &[y_1,y_2]=\frac{1}{2\sqrt{\mu}}(3y_2+y_3), \,
  [y_2,y_3]=0, \,
  [y_3,y_1]=\frac{1}{2\sqrt{\mu}}(y_2-y_3).
\end{align*}

 \item [(7)] If $[h]=Q^{-1}${$
 \left[\begin{matrix} \hspace{8pt}0 & -1 & 0\\ -1 & \hspace{8pt}0 & 0\\ \hspace{8pt}0 & \hspace{8pt}0 & \mu \end{matrix} \right]$}$Q$ with $\mu>0$,
\begin{align*}
  &y_1=\frac{1}{\sqrt{\mu}}x_3, \,
  y_2=\frac{1}{\sqrt{2}}(x_1-x_2), \,
  y_3=\frac{1}{\sqrt{2}}(x_1+x_2);\\
  &[y_1,y_2]=\frac{1}{2\sqrt{\mu}}(y_2-y_3), \,
  [y_2,y_3]=0, \,
  [y_3,y_1]=-\frac{1}{2\sqrt{\mu}}(y_2+3y_3).
\end{align*}

\end{itemize}

With respect to the orthonormal basis $\{y_i\}$, we have the following:

\begin{Thm}\label{thm: c=1-curvature}
The Ricci operator $\Ric$ of the metric $h$ on $G_1$ is expressed as follows:
\begin{itemize}
  \item [(1)] If $[h]=\left[\begin{matrix}-2&-1&0\\\hspace{8pt}1&\hspace{8pt}1&0\\ \hspace{8pt}0&\hspace{8pt}0&1\end{matrix}\right]^{-1}
  \left[\begin{matrix} 0 & 0 & 1\\ 0 & \mu & 0\\ 1 & 0 & 0 \end{matrix} \right]
  \left[\begin{matrix}-2&-1&0\\\hspace{8pt}1&\hspace{8pt}1&0\\ \hspace{8pt}0&\hspace{8pt}0&1\end{matrix}\right]$ with $\mu>0$
\begin{align*}
[\Ric]_{\{y_i\}}
&=\diag\{0,0,0\} \,\text{(flat)} \quad(\text{O'Neill type $\{11,1\}$})
\end{align*}

  \item [(2)] If $[h]=\left[\begin{matrix}-2&-1&0\\\hspace{8pt}1&\hspace{8pt}1&0\\ \hspace{8pt}0&\hspace{8pt}0&1\end{matrix}\right]^{-1}
  \left[\begin{matrix} \mu & 0 & 0\\ 0 & 0 & 1\\ 0 & 1 & 0 \end{matrix} \right]
  \left[\begin{matrix}-2&-1&0\\\hspace{8pt}1&\hspace{8pt}1&0\\ \hspace{8pt}0&\hspace{8pt}0&1\end{matrix}\right]$ with $\mu>0$,
 \begin{align*}
[\Ric]_{\{y_i\}}
&= \left[\begin{matrix} -\frac{\mu}{2} & \sqrt{\frac{\mu}{2}} & -\sqrt{\frac{\mu}{2}} \\
\sqrt{\frac{\mu}{2}} & \frac{\mu}{2} & 0 \\
\sqrt{\frac{\mu}{2}} & 0 & \frac{\mu}{2}  \end{matrix} \right] \quad(\text{O'Neill type $\{21\}$})
\end{align*}

  \item [(3)] If $[h]=\left[\begin{matrix}-2&-1&0\\\hspace{8pt}1&\hspace{8pt}1&0\\ \hspace{8pt}0&\hspace{8pt}0&1\end{matrix}\right]^{-1}
  \left[\begin{matrix} 1 & \hspace{8pt}0 & 0\\ 0 & -\nu & 0\\ 0 & \hspace{8pt}0 & \mu \end{matrix} \right]
  \left[\begin{matrix}-2&-1&0\\\hspace{8pt}1&\hspace{8pt}1&0\\ \hspace{8pt}0&\hspace{8pt}0&1\end{matrix}\right]$ with $\mu>0$, $\nu>0$,
 \begin{align*}
[\Ric]_{\{y_i\}}
&= \left[\begin{matrix} -\frac{4\nu+1}{2\mu\nu} & 0 & -\frac{1}{\mu\sqrt{\nu}} \\
0 & \frac{1-4\nu}{2\mu\nu} & 0 \\
\hspace{8pt}\frac{1}{\mu\sqrt{\nu}} & 0 & \hspace{8pt}\frac{1-4\nu}{2\mu\nu} \end{matrix} \right]
\end{align*}
$\Ric$ is of O'Neill type $\{11,1\}$, $\{1z\bar{z}\}$, $\{21\}$  when $1-4\nu >0, <0, =0$, respectively.

  \item [(4)] If $[h]=\left[\begin{matrix}-2&-1&0\\\hspace{8pt}1&\hspace{8pt}1&0\\ \hspace{8pt}0&\hspace{8pt}0&1\end{matrix}\right]^{-1}
  \left[\begin{matrix} 1 & 0 & \hspace{8pt}0\\ 0 & \nu & \hspace{8pt}0\\ 0 & 0 & -\mu \end{matrix} \right]
  \left[\begin{matrix}-2&-1&0\\\hspace{8pt}1&\hspace{8pt}1&0\\ \hspace{8pt}0&\hspace{8pt}0&1\end{matrix}\right]$ with $\mu>0$, $\nu>0$,
 \begin{align*}
[\Ric]_{\{y_i\}}
&= \left[\begin{matrix} \frac{4\nu-1}{2\mu\nu} & \frac{1}{\mu\sqrt{\nu}}& 0 \\
\frac{1}{\mu\sqrt{\nu}} & \frac{4\mu+1}{2\mu\nu} & 0 \\
0 & 0 & \frac{4\mu+1}{2\mu\nu} \end{matrix} \right] \quad(\text{O'Neill type $\{11,1\}$})
\end{align*}

 \item [(5)] If $[h]=\left[\begin{matrix}-2&-1&0\\\hspace{8pt}1&\hspace{8pt}1&0\\ \hspace{8pt}0&\hspace{8pt}0&1\end{matrix}\right]^{-1}
 \left[\begin{matrix} -1 & 0 & 0\\ \hspace{8pt}0 & \nu & 0\\ \hspace{8pt}0 & 0 & \mu \end{matrix} \right]
 \left[\begin{matrix}-2&-1&0\\\hspace{8pt}1&\hspace{8pt}1&0\\ \hspace{8pt}0&\hspace{8pt}0&1\end{matrix}\right]$ with $\mu>0$, $\nu>0$,
 \begin{align*}
[\Ric]_{\{y_i\}}
&= \left[\begin{matrix} \hspace{8pt}\frac{1-4\nu}{2\mu\nu} & 0 & \frac{1}{\mu\sqrt{\nu}} \\
\hspace{8pt}0   & \frac{1-4\mu}{2\mu\nu} & 0 \\
-\frac{1}{\mu\sqrt{\nu}} & 0 & -\frac{4\mu+1}{2\mu\nu} \end{matrix} \right]
\end{align*}
$\Ric$ is of O'Neill type $\{11,1\}$, $\{1z\bar{z}\}$, $\{21\}$  when $1-4\nu >0, <0, =0$, respectively.

 \item [(6)] If $[h]=\left[\begin{matrix}-2&-1&0\\\hspace{8pt}1&\hspace{8pt}1&0\\ \hspace{8pt}0&\hspace{8pt}0&1\end{matrix}\right]^{-1}
 \left[\begin{matrix} 0 & 1 & 0\\ 1 & 0 & 0\\ 0 & 0 & \mu \end{matrix} \right]
 \left[\begin{matrix}-2&-1&0\\\hspace{8pt}1&\hspace{8pt}1&0\\ \hspace{8pt}0&\hspace{8pt}0&1\end{matrix}\right]$ with $\mu>0$,
 \begin{align*}
[\Ric]_{\{y_i\}}
&= \left[\begin{matrix} -\frac{2}{\mu} &\hspace{8pt} 0 & \hspace{8pt}0 \\
\hspace{8pt}0 & -\frac{3}{\mu} & \hspace{8pt}\frac{1}{\mu} \\
\hspace{8pt}0 & -\frac{1}{\mu} & -\frac{1}{\mu}  \end{matrix} \right] \quad(\text{O'Neill type $\{21\}$})
\end{align*}

 \item [(7)] If $[h]=\left[\begin{matrix}-2&-1&0\\\hspace{8pt}1&\hspace{8pt}1&0\\ \hspace{8pt}0&\hspace{8pt}0&1\end{matrix}\right]^{-1}
 \left[\begin{matrix} \hspace{8pt}0 & -1 & 0\\ -1 & \hspace{8pt}0 & 0\\ \hspace{8pt}0 & \hspace{8pt}0 & \mu \end{matrix} \right]
 \left[\begin{matrix}-2&-1&0\\\hspace{8pt}1&\hspace{8pt}1&0\\ \hspace{8pt}0&\hspace{8pt}0&1\end{matrix}\right]$ with $\mu>0$,
\begin{align*}
[\Ric]_{\{y_i\}}
&= \left[\begin{matrix} -\frac{2}{\mu} & \hspace{8pt}0 & \hspace{8pt}0 \\
\hspace{8pt}0 & -\frac{1}{\mu} & -\frac{1}{\mu} \\
\hspace{8pt}0 & \hspace{8pt}\frac{1}{\mu} & -\frac{3}{\mu}  \end{matrix} \right] \quad(\text{O'Neill type $\{21\}$})
\end{align*}

\end{itemize}
\end{Thm}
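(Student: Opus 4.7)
The plan is to compute the Ricci operator case by case, exploiting the orthonormal bases $\{y_1,y_2,y_3\}$ and the corresponding structure constants that were recorded just before the statement. Since the classification result (Theorem~\ref{thm: c=1}) already reduces every left invariant Lorentzian metric on $G_1$ to one of seven canonical forms, and since the Ricci operator is an invariant of the isometry class by the commutative diagram relating $\nabla$ and $\nabla'$ established at the start of Section~\ref{curvature-Lorentzian}, it suffices to compute $\Ric$ for each canonical representative.

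For each of the seven cases, I would proceed mechanically as follows. First, read off the Lie brackets $[y_i,y_j]$ in the chosen orthonormal basis from the data listed between Theorem~\ref{thm: c=1} and Theorem~\ref{thm: c=1-curvature}. Next, apply Koszul's formula
\[
2h(\nabla_{y_i}y_j,y_k)=h([y_i,y_j],y_k)+h([y_k,y_i],y_j)+h([y_k,y_j],y_i)
\]
to recover the connection coefficients $\nabla_{y_i}y_j=\sum_k \Gamma_{ij}^k y_k$, remembering that raising an index involving $y_3$ introduces a sign since $h(y_3,y_3)=-1$. Then compute the Riemann tensor components $R_{y_iy_j}y_k=\nabla_{[y_i,y_j]}y_k-[\nabla_{y_i},\nabla_{y_j}]y_k$, and assemble the Ricci curvature via
\[
\ric(y_j,y_i)=h(R_{y_jy_1}y_i,y_1)+h(R_{y_jy_2}y_i,y_2)-h(R_{y_jy_3}y_i,y_3).
\]
Finally, the entries $R_{ij}=h(\Ric(y_j),y_i)$ of the Lorentzian--symmetric matrix $[\Ric]_{\{y_i\}}$ are obtained by applying $J_{2,1}$ to $[\ric]_{\{y_i\}}$ as in formula (\ref{Ric:ric}), which in practice means flipping the signs of the third row.

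Once the entries of $[\Ric]_{\{y_i\}}$ have been computed, the determination of the O'Neill type reduces to a direct application of Lemma~\ref{lemma:OT1} or Lemma~\ref{lemma:OT2}, depending on where the off-diagonal coupling occurs. In particular, for cases (1), (4) where the matrix is block-diagonal with a symmetric $2\times2$ spacelike block, Lemma~\ref{lemma:OT2} forces type $\{11,1\}$; for cases (2), (6), (7) the coupling between a spacelike and the timelike coordinate produces a matrix of the form handled by Lemma~\ref{lemma:OT1} with discriminant $D=0$, giving type $\{21\}$; and for cases (3), (5) the same lemma yields the trichotomy $\{11,1\}/\{1z\bar z\}/\{21\}$ governed by the sign of $1-4\nu$. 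Case (1) is distinguished by the vanishing of all structure constants' nontrivial combinations, so the metric is flat.

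The main obstacle is not conceptual but bookkeeping: each case involves several nonzero brackets with irrational coefficients (factors of $\sqrt{\mu}$, $\sqrt{\nu}$, $\sqrt{2}$), and the Koszul computation must be carried out carefully with attention to the signs produced by $h(y_3,y_3)=-1$; a single sign error propagates through the Ricci tensor and corrupts the O'Neill-type verdict. I would organize the computation by first writing out $\nabla_{y_i}y_j$ as a table for each case, double-checking symmetry $\nabla_{y_i}y_j-\nabla_{y_j}y_i=[y_i,y_j]$ and metric compatibility as a consistency check, and only then passing to $R$ and to $\ric$. As a global sanity check, one can verify formula (\ref{eq:scalar}) by comparing the trace of the computed $[\Ric]_{\{y_i\}}$ to twice the sum of sectional curvatures, which Lemma~\ref{lamma:Lorentzian version of Milnor's formula} ties back to the diagonal entries of the Ricci matrix; this provides a strong independent verification for each row of Tables~\ref{tab3}.
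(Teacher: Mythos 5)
Your overall plan---recover $\nabla$ from the listed brackets via Koszul's formula, assemble $R$ and $\ric$, pass to $[\Ric]_{\{y_i\}}=J_{2,1}[\ric]_{\{y_i\}}$, and then read off the O'Neill types from Lemmas~\ref{lemma:OT1} and~\ref{lemma:OT2}---is essentially the route the paper takes (its entries come from the same mechanical computation, done by computer and hand-checked), and it is adequate for six of the seven cases: (1) and (4) fall under Lemma~\ref{lemma:OT2} (with (1) flat), (6) and (7) under Lemma~\ref{lemma:OT1} with $D=0$, and (3), (5) under Lemma~\ref{lemma:OT1} with the sign of $D$ equal to the sign of $1-4\nu$.

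There is, however, a genuine gap in your treatment of case (2). You group it with (6) and (7) and assert that its matrix is ``of the form handled by Lemma~\ref{lemma:OT1} with $D=0$.'' It is not: in case (2) the entries $(1,2)$, $(2,1)$, $(1,3)$, $(3,1)$ are all nonzero, so $y_1$ couples to both $y_2$ and $y_3$, whereas Lemma~\ref{lemma:OT1} only covers matrices in which one spacelike direction decouples completely and the sole off-diagonal coupling is the skew pair between the remaining spacelike and the timelike direction; Lemma~\ref{lemma:OT2} does not apply either. Since this matrix superficially resembles the canonical type $\{3\}$ form, the type cannot be settled by a discriminant count, and this is precisely the one case the paper singles out: it shows the operator has a simple eigenvalue and a double eigenvalue whose eigenspace is only one-dimensional (excluding types $\{11,1\}$ and $\{3\}$), hence type $\{21\}$, and it confirms this by exhibiting an explicit element of $O(2,1)$ conjugating $[\Ric]_{\{y_i\}}$ into the canonical $\{21\}$ form. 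Your proposal needs this separate eigenvalue/eigenspace argument (or an equivalent explicit change of orthonormal basis) for case (2); as written, that step would fail.
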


\begin{proof} Applying Lemma~\ref{lemma:OT1} and ~\ref{lemma:OT2},
except for (2), the O'Neill types  in all cases can be easily obtained.
(2) Let $[\Ric]_{\{y_i\}}=${\footnotesize$\left[\begin{matrix} -\frac{\mu}{2} & \sqrt{\frac{\mu}{2}} & -\sqrt{\frac{\mu}{2}} \\
\hspace{8pt}\sqrt{\frac{\mu}{2}} & \frac{\mu}{2} & \hspace{8pt}0 \\
\hspace{8pt}\sqrt{\frac{\mu}{2}} & 0 & \hspace{8pt}\frac{\mu}{2}  \end{matrix} \right]$}.
Then $[\Ric]_{\{y_i\}}$  has two eigenvalues $-\frac{2}{\mu},\frac{2}{\mu}$ ($\frac{2}{\mu}$ has multiplicity two), each associated to a one-dimensional eigenspace;
  Hence $\Ric$ is of O'Neill type $\{21\}$.
In this case, if we choose an orthonormal basis $\frakB'$ given by
$$
[\id]_{\frakB',\frakB} =
\left[\begin{matrix} \hspace{8pt}1&\hspace{8pt}\frac{1}{\sqrt{\mu}}&\hspace{8pt}\frac{1}{\sqrt{\mu}}\\
-\frac{1}{\sqrt{2\mu}}&\hspace{8pt}\frac{3\mu-1}{2\sqrt{2}\mu}&\hspace{8pt}\frac{\mu-1}{2\sqrt{2}\mu}\\
-\frac{1}{\sqrt{2\mu}}&-\frac{\mu-1}{2\sqrt{2}\mu}&-\frac{3\mu+1}{2\sqrt{2}\mu}\end{matrix}\right]\in O(2,1),
$$
then
$$
[\Ric]_{\frakB'} = \left[\begin{matrix}-\frac{\mu}{2}&\hspace{8pt}0&0\\\hspace{8pt}0&\frac{\mu}{2}+1&1\\\hspace{8pt}0&-1&\frac{\mu}{2}-1\end{matrix}\right]
$$
\end{proof}

\subsection{The case of $G_c$, $c<1$}\label{sec:c<1}

In this section, we consider the three-dimensional non-unimodular Lie group $G_c$ with $c<1$,
and then we will compute the moduli space $\aut(G_c)\bs\frakM(G_c)$ and the Ricci operator.

Instead of the basis $\{x,y,z\}$ for $\frakg_c$ given in Section~\ref{sec: Lie algebras},
we choose a new basis $\{x_1,x_2,x_3\}$ for $\frakg_c$
by putting $x_1=-(1-w)x+y, \,\, x_2=-(1+w)x+y,\,\,x_3=z$ where $w=\sqrt{1-c}$.
Then they satisfy
$$
[x_1,x_2]=0, \quad [x_3,x_1]=(1+w)x_1, \quad [x_3,x_2]=(1-w)x_2.
$$
With respect to this new basis, the Lie group $\aut(\frakg_c)$ is
isomorphic to
$$
 \left\{\left[ \begin{matrix} \gamma&0& *\\0&\delta& *\\ 0 & 0 & 1\end{matrix}\right]
\bigg|  \begin{array}{l} \gamma,\delta,*\in\bbr, \gamma\delta\ne
0\end{array} \right\}.
$$
In fact, given $\gamma,\delta$,
$$
\begin{array}{l}
\left[\begin{matrix}\hspace{8pt}\frac{1}{2w}&\frac{1+w}{2w}&0\\-\frac{1}{2w}&\frac{w-1}{2w}&0\\ \hspace{8pt}0&0&1\end{matrix}\right]^{-1}
\left[ \begin{matrix} \gamma&0&*\\0&\delta& *\\ 0 & 0 & 1\end{matrix}\right]
\left[\begin{matrix}\hspace{8pt}\frac{1}{2w}&\frac{1+w}{2w}&0\\-\frac{1}{2w}&\frac{w-1}{2w}&0\\ \hspace{8pt}0&0&1\end{matrix}\right]
=\left[\begin{matrix} \beta-\alpha & -c\alpha & * \\\alpha & \beta+\alpha & *\\ 0 & 0 &
1\end{matrix}\right]
\end{array}
$$
where $\alpha=\frac{\delta-\gamma}{2z}, \beta=\frac{\delta+\gamma}{2}$ and
$[\id]_{\{x,y,z\},\{x_1,x_2,x_3\}} = \left[\begin{matrix}\hspace{8pt}\frac{1}{2w}&\frac{1+w}{2w}&0\\-\frac{1}{2w}&\frac{w-1}{2w}&0\\ \hspace{8pt}0&0&1\end{matrix}\right]$.

\begin{Thm}\label{thm: c<1}
Any left invariant Lorentzian metric on $G_c$ with $c<1$ is equivalent up
to automorphism to a metric whose associated matrix is of the form
$$
\left[\begin{matrix}\hspace{8pt}\frac{1}{2w}&\frac{1+w}{2w}&0\\-\frac{1}{2w}&\frac{w-1}{2w}&0\\ \hspace{8pt}0&0&1\end{matrix}\right]^{-1}
M
\left[\begin{matrix}\hspace{8pt}\frac{1}{2w}&\frac{1+w}{2w}&0\\-\frac{1}{2w}&\frac{w-1}{2w}&0\\ \hspace{8pt}0&0&1\end{matrix}\right]
$$
where $M$ is of the form
\begin{align*}
&\left[\begin{matrix}0 & 0 & 1\\ 0 & 1 & 0\\ 1 & 0 & 0 \end{matrix}\right],
&& \left[\begin{matrix}1 & 0 & 0\\ 0 & 0 & 1\\ 0 & 1 & 0 \end{matrix}\right],
&&\left[\begin{matrix} 1 & 1 & 0\\ 1 & 1 & \mu\\ 0 & \mu & 0 \end{matrix}\right],
&&\left[\begin{matrix} 1 & 0 & \hspace{8pt}0\\ 0 & 1 & \hspace{8pt}0\\ 0 & 0 & -\mu \end{matrix}\right], \\
&\left[\begin{matrix} 1 & \hspace{8pt}0 & 0\\ 0 & -1 & 0\\ 0 & \hspace{8pt}0 & \mu \end{matrix}\right],
&&\left[\begin{matrix} -1 & 0 & 0\\ \hspace{8pt}0 & 1 & 0\\ \hspace{8pt}0 & 0 & \mu \end{matrix}\right],
&&\left[\begin{matrix}0 & 1 & 0\\ 1 & 0 & 0\\ 0 & 0 & \mu \end{matrix}\right],
&&\left[\begin{matrix}0 & 1 & 0\\ 1 & 1 & 0\\ 0 & 0 & \mu \end{matrix}\right],\\
&\left[\begin{matrix}0 & \hspace{8pt}1 & 0\\ 1 & -1 & 0\\ 0 & \hspace{8pt}0 & \mu \end{matrix}\right],
&&\left[\begin{matrix} 1 & 1 & 0\\ 1 & \tau & 0\\ 0 & 0 & \nu \end{matrix}\right],
&&\left[\begin{matrix} -1 & \hspace{8pt}1 & 0\\ \hspace{8pt}1 & -\eta & 0\\ \hspace{8pt}0 & \hspace{8pt}0 & \mu \end{matrix}\right]
\end{align*}
where $w=\sqrt{1-c},\,  \mu>0, \, \nu(\tau-1)<0$ and $\eta<1$.
\end{Thm}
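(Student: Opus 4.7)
My plan is to mimic the argument of Theorem~\ref{thm: c>1}, exploiting the unusually simple form of $\aut(\frakg_c)$ in the new basis $\{x_1, x_2, x_3\}$. In this basis every automorphism is $B = \bigl[\begin{smallmatrix} \gamma & 0 & x \\ 0 & \delta & y \\ 0 & 0 & 1 \end{smallmatrix}\bigr]$ with $\gamma\delta \ne 0$, and a direct computation shows that $B^t[h]B$ acts as $H \mapsto \diag(\gamma,\delta)^t\,H\,\diag(\gamma,\delta)$ on the upper-left $2\times 2$ block $H = (h_{ij})_{i,j\le 2}$ of $[h]$, while the free parameters $x, y$ produce affine shifts on $(h_{13}, h_{23}, h_{33})$ whose linear part is governed by $H$. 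A key structural fact is that $1 + w$ and $1 - w$ (with $w = \sqrt{1-c} > 0$) are distinct eigenvalues of $\ad(x_3)$, so no automorphism swaps $x_1$ and $x_2$; this asymmetry is what forces the appearance of ``doubled'' normal forms related by sign reversal.

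The argument splits by $\rank H$, with $H \ne 0$ forced by $\det[h] < 0$. In the rank-$2$ case, the linear system that kills $h_{13}$ and $h_{23}$ — with coefficient matrix $\bigl[\begin{smallmatrix} \gamma h_{11} & \gamma h_{12} \\ \delta h_{12} & \delta h_{22} \end{smallmatrix}\bigr]$ of determinant $\gamma\delta\det(H) \ne 0$ — is uniquely solvable, so $[h]$ reduces to block-diagonal form $\diag(H, h_{33})$ with $\det(H)\,h_{33} < 0$. A case analysis on which of $h_{11}, h_{22}, h_{12}$ vanish, using the diagonal scaling to normalize non-zero diagonal entries to $\pm 1$ and to fix $h_{12}$ to $1$ when possible, then produces the three diagonal normal forms (one per allowed signature), the three off-diagonal forms in which $h_{11} h_{22} = 0$ but $h_{12} \ne 0$, and the two one-parameter families $\bigl[\begin{smallmatrix} 1 & 1 & 0 \\ 1 & \tau & 0 \\ 0 & 0 & \nu \end{smallmatrix}\bigr]$ and $\bigl[\begin{smallmatrix} -1 & 1 & 0 \\ 1 & -\eta & 0 \\ 0 & 0 & \mu \end{smallmatrix}\bigr]$ distinguished by the sign of $h_{11}$. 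Imposing $(+,+,-)$-signature yields the listed range conditions on $\mu, \nu, \tau, \eta$, ruling out in particular $\diag(-1,-1,\mu)$ and forcing $\eta < 1$. In the rank-$1$ case, the $(x, y)$-translation can only reach the image of $H$ in the $(h_{13}, h_{23})$-plane, so a one-dimensional residual survives. When $H$ is axis-aligned, a short $\det$-computation rules out $H = \diag(-1, 0)$ and $\diag(0, -1)$, leaving the two axis-aligned normal forms after the remaining scaling. When $H$ is rank $1$ but not axis-aligned, scaling reduces it to $\bigl[\begin{smallmatrix} 1 & 1 \\ 1 & 1 \end{smallmatrix}\bigr]$ (the only sign pattern compatible with $\det[h] < 0$, since $\det[h] = -(h_{13} - h_{23})^2$ in this normalization); this forces $h_{13} \ne h_{23}$, and normalizing the residue $h_{23} - h_{13}$ to $\mu > 0$ while using the remaining $y$-freedom to clear $h_{33}$ produces the exceptional form with the off-diagonal $\mu$.

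\textbf{The main obstacle} I expect is verifying pairwise inequivalence of the eleven candidate normal forms, especially across the two parametric families. Following cases (2)--(5) of the proof of Theorem~\ref{thm: c>1}, the approach for each candidate pair is: set $B^t[h]B = [h']$, extract the linear system in $(x, y)$ (whose determinant factors through $\gamma\delta\det(H)$, i.e.\ through $\gamma\delta(\tau - 1)$ or $\gamma\delta(\eta - 1)$ in the parametric families) together with the algebraic system in $(\gamma, \delta)$ coming from the $H$-block equations, and check that the only solutions preserve the parameters. Because $\aut(\frakg_c)$ is diagonal in this basis, each such sub-computation should be substantially cleaner than its $c > 1$ analogue, but the sheer number of cross-comparisons between the eleven forms is what makes this the most laborious step of the proof.
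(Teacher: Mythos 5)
Your plan is correct and follows essentially the same route as the paper's proof: pass to the basis $\{x_1,x_2,x_3\}$ in which every automorphism is $\left[\begin{smallmatrix}\gamma&0&*\\0&\delta&*\\0&0&1\end{smallmatrix}\right]$ with $\gamma\delta\ne0$, split according to whether $h_{11}h_{22}-h_{12}^2$ vanishes, clear $h_{13},h_{23}$ (respectively $h_{33}$) by the translation part, normalize the $2\times2$ block by diagonal scalings, and discard the sign patterns incompatible with $\det[h]<0$ or with signature $(+,+,-)$ (exactly how the paper eliminates the negative-definite branch of the last family). The only divergence is at the very end, where the paper merely asserts that the eleven normal forms are pairwise inequivalent while you plan to verify it by the explicit equations used in the proof of Theorem~\ref{thm: c>1}; this is a fleshing-out of the same argument rather than a different approach.
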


\begin{proof}
Let $\{x_1,x_2,x_3\}$ be the basis for $\frakg_c$ given by
$$x_1=(-1+w)x+y, \,\, x_2=-(1+w)x+y,\,\,x_3=z$$
where $w=\sqrt{1-c}$.
Then
$$
[x_1,x_2]=0, \,\,[x_3,x_1]=(1+w)x_1, \,\,[x_3,x_2]=(1-w)x_2
$$
and
the Lie group $\aut(\frakg_c)$ is isomorphic to {\footnotesize $
\left\{\left[\begin{matrix} \gamma&0& *\\0&\delta& *\\ 0 & 0 &
1\end{matrix}\right] \bigg|\,\, \gamma,\delta,*\in\bbr,
\gamma\delta\ne 0 \right\}$}.

Let $h$ be a left invariant Lorentzian metric on $G_c$  with $c<1$.
Then with respect to the basis $\{x_1,x_2,x_3\}$,
$[h]=[h_{ij}]$ is a symmetric and non-degenerate real $3\times 3$ matrix.

First assume that  $h_{11}h_{22}-h_{12}^2= 0$ and $h_{12}=0$. Then either $h_{11}=0, h_{22}\ne 0$ or $h_{11}\ne 0, h_{22}= 0$.
Assume $h_{11}=0$ and $h_{22}\ne 0$. Since $\det([h])=-h_{22}h_{13}^2<0$,
we have $h_{22}>0$ and $h_{13}\ne 0$.
Let  $B =${\footnotesize
$\left[\begin{matrix}
\frac{1}{h_{13}} & 0 & 0\\
0 & \frac{1}{h_{13}}& -\frac{h_{23}}{h_{22}} \\
0 & 0 & 1 \end{matrix}\right]$} $\in\aut(\frakg_c)$. Then
 $B^t [h] B =$ {\footnotesize$\left[\begin{matrix}0 & 0 & 1\\ 0 & \mu & 0\\ 1 & 0 & \nu  \end{matrix}\right]$}.
 Since $\det(B^t [h] B)=-\mu<0$, we have $\mu>0$. Let $C =${\footnotesize
$\left[\begin{matrix}
1 & 0 & -\frac{\nu}{2}\\
0 & \frac{1}{\sqrt{\mu}} & \hspace{8pt}0 \\
0 & 0 & \hspace{8pt}1 \end{matrix}\right]$} $\in\aut(\frakg_c)$. Then
 $C^tB^t [h] BC =$ {\footnotesize$\left[\begin{matrix}0 & 0 & 1\\ 0 & 1 & 0\\ 1 & 0 & 0  \end{matrix}\right]$}.  Similarly, when $h_{11}\ne 0$ and $h_{22}=0$, then $h_{23}\ne 0$,
$B =${\footnotesize
$\left[\begin{matrix}
\frac{1}{h_{23}} & 0 & -\frac{h_{13}}{h_{11}}\\
0 & \frac{1}{h_{23}} & 0 \\
0 & 0 & 1 \end{matrix}\right]$} $\in\aut(\frakg_c)$ and
$B^t [h] B =$ {\footnotesize $\left[\begin{matrix}\mu & 0 & 0\\ 0 & 0 & 1\\ 0 & 1 & \nu \end{matrix}\right]$}.
 Since $\det(B^t [h] B)=-\mu<0$, we have $\mu>0$.
Let $C =${\footnotesize
$\left[\begin{matrix}
\frac{1}{\sqrt{\mu}} & 0 & \hspace{8pt}0\\
0 & 1 & -\frac{\nu}{2} \\
0 & 0 & \hspace{8pt}1 \end{matrix}\right]$} $\in\aut(\frakg_c)$. Then
 $C^tB^t [h] BC =$ {\footnotesize$\left[\begin{matrix}1 & 0 & 0\\ 0 & 0 & 1\\ 0 & 1 & 0  \end{matrix}\right]$}.

Now assume that  $h_{11}h_{22}-h_{12}^2= 0$ and $h_{12}\ne 0$. Then $h_{11}\ne0, h_{22}\ne 0$.
Let  $B =${\footnotesize
$\left[\begin{matrix}
\frac{h_{12}}{h_{11}} & 0 & 0\\
0 & 1 & 0 \\
0 & 0 & 1 \end{matrix}\right]$} $\in\aut(\frakg_c)$. Then
 $B^t [h] B =$ {\footnotesize$\left[\begin{matrix}h_{22} & h_{22} & h_{13}'\\ h_{22} & h_{22} & h_{23}'\\ h_{13}' & h_{23}' & h_{33}' \end{matrix}\right]$}.
Since $\det(B^t [h] B)=-h_{22}(h_{13}'-h_{23}')^2<0$, we have
$h_{22}>0$ and $h_{13}'\ne h_{23}'$.
Let $C =\diag\{\frac{1}{\sqrt{h_{22}}},\frac{1}{\sqrt{h_{22}}}, 1\}\in\aut(\frakg_c)$.
Then
$C^t B^t [h] BC =$ {\footnotesize $\left[\begin{matrix}1 & 1 & \nu\\ 1 & 1 & \lambda\\
\nu & \lambda & \mu \end{matrix}\right]$} where $-(\nu-\lambda)^2<0$.
Let $D =${\footnotesize
$\left[\begin{matrix}
1 & 0 & \frac{\nu^2-2\nu\lambda+\mu}{2(\lambda-\nu)}\\
0 & 1 & \frac{\nu^2-\mu}{2(\lambda-\nu)} \\
0 & 0 & 1 \end{matrix}\right]$} $\in\aut(\frakg_c)$. Then
 $D^tC^t B^t [h] BCD =$ {\footnotesize$\left[\begin{matrix}1 & 1 & 0\\ 1 & 1 & \mu\\
0 & \mu & 0\end{matrix}\right]$} where $\mu\ne0$.
Let $E = \diag\{-1,-1,1\}\in\aut(\frakg_c)$.
Then
 $E^tD^tC^t B^t [h] BCDE =$ {\footnotesize$\left[\begin{matrix}1 & \hspace{8pt}1 & \hspace{8pt}0\\ 1 & \hspace{8pt}1 & -\mu\\
0 & -\mu & \hspace{8pt}0\end{matrix}\right]$}.

Next suppose that $h_{11}h_{22}-h_{12}^2\ne 0$.
Then we may assume that $h_{13}=h_{23}=0$ because
$B =$
{\footnotesize $\left[\begin{matrix}
1 & 0 & \frac{h_{13}h_{22}-h_{12}h_{23}}{h_{12}^2-h_{11}h_{22}}\\
0 & 1 & \frac{h_{11}h_{23}-h_{12}h_{13}}{h_{12}^2-h_{11}h_{22}} \\
0 & 0 & 1 \end{matrix}\right]$} $\in\aut(\frakg_c)$ and
$B^t[h]B=[h']$ where $h'_{13}=h'_{23}=0.$

If $h_{12}=0$, then $h_{11}\ne0$ and $h_{22}\ne0$.
Let $B=\diag\{\frac{1}{\sqrt{|h_{11}|}},\frac{1}{\sqrt{|h_{22}|}},1\}\in\aut(\frakg_c)$.
Then
$B^t[h]B$ is of the form
$$
\left[\begin{matrix} 1 & 0 & \hspace{8pt}0\\ 0 & 1 & \hspace{8pt}0\\ 0 & 0 & -\mu \end{matrix}\right], \quad
\left[\begin{matrix} 1 & \hspace{8pt}0 & 0\\ 0 & -1 & 0\\ 0 &\hspace{8pt} 0 & \mu \end{matrix}\right], \quad
\left[\begin{matrix} -1 & 0 & 0\\ \hspace{8pt}0 & 1 & 0\\ \hspace{8pt}0 & 0 & \mu \end{matrix}\right]
$$
where $\mu>0$.

Now assume $h_{12}\ne0$.
When $h_{11}=h_{22}=0$,
$B=\diag\{\frac{1}{h_{12}},1,1\}\in\aut(\frakg_c)$
and $B^t [h] B =$ {\footnotesize$\left[\begin{matrix}0 & 1 & 0\\ 1 & 0 & 0\\ 0 & 0 & \mu \end{matrix}\right]$} where $\mu>0$.
 When $h_{11}=0$ and $h_{22}\ne0$,
  $B =\diag\{\frac{\sqrt{|h_{22}|}}{h_{12}}, \frac{1}{\sqrt{|h_{22}|}}, 1 \}\in\aut(\frakg_c)$
and
$B^t [h] B =$ {\footnotesize $\left[\begin{matrix}0 & \hspace{8pt}1 & 0\\ 1 & \pm 1 & 0\\ 0 & \hspace{8pt}0 & \mu \end{matrix}\right]$} where $\mu>0$.
When $h_{11}\ne0$,
 $B =\diag\{\frac{1}{\sqrt{|h_{11}|}}, \frac{\sqrt{|h_{11}|}}{h_{12}}, 1\}\in\aut(\frakg_c)$
and $B^t[h]B$ is of the form
$$
\left[\begin{matrix} 1 & 1 & 0\\ 1 & \tau & 0\\ 0 & 0 & \nu \end{matrix}\right], \quad
\left[\begin{matrix} -1 & \hspace{8pt}1 & 0\\ \hspace{8pt}1 & -\tau & 0\\\hspace{8pt} 0 & \hspace{8pt}0 & \nu \end{matrix}\right]
$$
where $\nu(\tau-1)<0$.

Suppose $[h]=${\footnotesize $\left[\begin{matrix} -1 & \hspace{8pt}1 & 0\\ \hspace{8pt}1 & -\tau & 0\\ \hspace{8pt}0 & \hspace{8pt}0 & \nu \end{matrix}\right]$} with $1<\tau$ and $\nu<0$.
Let
$$
y_1=x_1, \,\,  y_2=\frac{1}{\sqrt{\tau-1}}(x_1+x_2), \,\, y_3=\frac{1}{\sqrt{-\nu}}x_3
$$
Then $\calB=\{y_1,y_2,y_3\}$ is a basis of the Lie algebra of $G_c$ such that
$[h]_{\calB} = \diag\{-1,-1,-1\}$, which is a contradiction.

Finally it is easy to see that any eleven such distinct matrices are not equivalent.
\end{proof}

Let $P=\left[\begin{matrix}\hspace{8pt}\frac{1}{2w}&\frac{1+w}{2w}&0\\-\frac{1}{2w}&\frac{w-1}{2w}&0\\ \hspace{8pt}0&0&1\end{matrix}\right]$.
For each left invariant Lorentzian metric $h$ on $G_c$ with $c<1$ whose associated matrix is given in Theorem~\ref{thm: c<1}, we obtain an orthonormal basis $\{y_i\}$ as follows:
\begin{itemize}
  \item [(1)] If $[h]=P^{-1}${\footnotesize$\left[\begin{matrix} 0 & 0 & 1\\ 0 & 1 & 0\\ 1 & 0 & 0 \end{matrix} \right]$}$P$,
\begin{align*}
  &y_1=x_2, \, y_2=\frac{1}{\sqrt{2}}(x_1+x_3), \,
  y_3=\frac{1}{\sqrt{2}}(x_1-x_3);\\
  &[y_1,y_2]=[y_3,y_1]=\frac{w-1}{\sqrt{2}}y_1, \,
  [y_2,y_3]=\frac{1+w}{\sqrt{2}}(y_2+y_3).
\end{align*}

  \item [(2)] If $[h]=P^{-1}${\footnotesize$ \left[\begin{matrix} 1 & 0 & 0\\ 0 & 0 & 1\\ 0 & 1 & 0 \end{matrix} \right]$}$P$,
\begin{align*}
  &y_1=x_1, \, y_2=\frac{1}{\sqrt{2}}(x_2+x_3), \, y_3=\frac{1}{\sqrt{2}}(x_2-x_3);\\
  &[y_1,y_2]=[y_3,y_1]=-\frac{1+w}{\sqrt{2}}y_1, \,
  [y_2,y_3]=\frac{1-w}{\sqrt{2}}(y_2+y_3).
\end{align*}

  \item [(3)] If $[h]=P^{-1}${\footnotesize$\left[\begin{matrix} 1 & 1 & 0\\ 1 & 1 & \mu\\ 0 & \mu & 0 \end{matrix} \right]$}$P$ with $\mu>0$,
\begin{align*}
&y_1=x_1, \, y_2=x_1-x_2-\frac{1}{2\mu}x_3, \, y_3=x_1-x_2+\frac{1}{2\mu}x_3;\\
&[y_1,y_2]=[y_3,y_1]=\frac{1+w}{\mu}y_1, \,
[y_2,y_3]=-\frac{2w}{\mu}y_1+\frac{w-1}{2\mu}(y_2+y_3).
\end{align*}

 \item [(4)] If $[h]=P^{-1}${\footnotesize$\left[\begin{matrix} 1 & 0 & \hspace{8pt}0\\ 0 & 1 & \hspace{8pt}0\\ 0 & 0 & -\mu \end{matrix} \right]$}$P$ with $\mu>0$,
\begin{align*}
&y_1=x_1, \, y_2=x_2, \, y_3=\frac{1}{\sqrt{\mu}}x_3;\\
&[y_1,y_2]=0, \,
[y_2,y_3]=\frac{w-1}{\sqrt{\mu}}y_2, \,
[y_3,y_1]=\frac{1+w}{\sqrt{\mu}}y_1.
\end{align*}

  \item [(5)] If $[h]=P^{-1}${\footnotesize$\left[\begin{matrix} 1 & \hspace{8pt}0 & 0\\ 0 & -1 & 0\\ 0 & \hspace{8pt}0 & \mu \end{matrix} \right]$}$P$ with $\mu>0$,
\begin{align*}
&y_1=x_1, \, y_2=\frac{1}{\sqrt{\mu}}x_3, \, y_3=x_2;\\
&[y_1,y_2]=-\frac{1+w}{\sqrt{\mu}}y_1, \,
[y_2,y_3]=\frac{1-w}{\sqrt{\mu}}y_3, \,
[y_3,y_1]=0.
\end{align*}

  \item [(6)] If $[h]=P^{-1}${\footnotesize$\left[\begin{matrix} -1 & 0 & 0\\ \hspace{8pt}0 & 1 & 0\\ \hspace{8pt}0 & 0 & \mu \end{matrix} \right]$}$P$ with $\mu>0$,
\begin{align*}
&y_1=x_2, \, y_2=\frac{1}{\sqrt{\mu}}x_3, \, y_3=x_1;\\
&[y_1,y_2]= -\frac{1-w}{\sqrt{\mu}}y_1, \,
[y_2,y_3]= \frac{1+w}{\sqrt{\mu}}y_3, \,
[y_3,y_1]= 0.
\end{align*}

 \item [(7)] If $[h]=P^{-1}${\footnotesize$\left[\begin{matrix} 0 & 1 & 0\\ 1 & 0 & 0\\ 0 & 0 & \mu \end{matrix} \right]$}$P$ with $\mu>0$,
\begin{align*}
&y_1=\frac{1}{\sqrt{\mu}}x_3, \,
  y_2=\frac{1}{\sqrt{2}}(x_1+x_2), \,
  y_3=\frac{1}{\sqrt{2}}(x_1-x_2);\\
&[y_1,y_2]=\frac{1}{\sqrt{\mu}}y_2+ \frac{w}{\sqrt{\mu}}y_3, \,
[y_2,y_3]=0,\,
[y_3,y_1]=-\frac{w}{\sqrt{\mu}}y_2 - \frac{1}{\sqrt{\mu}}y_3.
\end{align*}

 \item [(8)] If $[h]=P^{-1}${\footnotesize$\left[\begin{matrix} 0 & 1 & 0\\ 1 & 1 & 0\\ 0 & 0 & \mu  \end{matrix} \right]$}$P$ with $\mu>0$,
\begin{align*}
&y_1=\frac{1}{\sqrt{\mu}}x_3, \,   y_2=x_2,\, y_3=x_1-x_2;\\
&[y_1,y_2]=\frac{1-w}{\sqrt{\mu}}y_2, \,
[y_2,y_3]=0, \,
[y_3,y_1]=-\frac{2w}{\sqrt{\mu}}y_2-\frac{1+w}{\sqrt{\mu}}y_3.
 \end{align*}

 \item [(9)] If $[h]=P^{-1}${\footnotesize$\left[\begin{matrix} 0 & \hspace{8pt}1 & 0\\ 1 & -1 & 0\\ 0 & \hspace{8pt}0 & \mu  \end{matrix} \right]$}$P$ with $\mu>0$,
\begin{align*}
&y_1=\frac{1}{\sqrt{\mu}} x_3, \,  y_2=x_1+x_2, \,  y_3=x_2;\\
&[y_1,y_2]=\frac{1+w}{\sqrt{\mu}}y_2-\frac{2w}{\sqrt{\mu}}y_3, \,
[y_2,y_3]=0, \,
[y_3,y_1]=\frac{w-1}{\sqrt{\mu}}y_3.
\end{align*}

  \item [(10)] If $[h]=P^{-1}${\footnotesize$\left[\begin{matrix} 1 & 1 & 0\\ 1 & \tau & 0\\ 0 & 0 & \nu \end{matrix} \right]$}$P$ with $\nu(\tau-1)<0$,\\
\begin{itemize}
  \item[(10-1)] when $\nu>0$ and $\tau<1$,
\begin{align*}
&y_1=x_1, \,  y_2=\frac{1}{\sqrt{\nu}}x_3, \,  y_3=\frac{1}{\sqrt{1-\tau}}(x_1-x_2);\\
&[y_1,y_2]=-\frac{1+w}{\sqrt{\nu}}y_1, \,
[y_2,y_3]=\frac{2w}{\sqrt{\nu(1-\tau)}}y_1 + \frac{1-w}{\sqrt{\nu}}y_3, \,
[y_3,y_1]=0
 \end{align*}

  \item[(10-2)] when $\nu<0$ and $\tau>1$,
\begin{align*}
&y_1=x_1, \,  y_2=\frac{1}{\sqrt{\tau-1}}(x_1-x_2), \,  y_3=\frac{1}{\sqrt{-\nu}}x_3;\\
&[y_1,y_2]=0, \,
[y_2,y_3]=\frac{-2w}{\sqrt{\nu(1-\tau)}}y_1 + \frac{w-1}{\sqrt{-\nu}}y_2, \,
[y_3,y_1]=\frac{1+w}{\sqrt{-\nu}}y_1
\end{align*}
\end{itemize}

  \item [(11)] If $[h]=P^{-1}${\footnotesize$\left[\begin{matrix} -1 & \hspace{8pt}1 & 0\\ \hspace{8pt}1 & -\eta & 0\\ \hspace{8pt}0 & \hspace{8pt}0 & \mu \end{matrix} \right]$}$P$ with $\mu>0$, $\eta<1$,
\begin{align*}
& y_1=\frac{1}{\sqrt{\mu}}x_3, \,  y_2= \frac{1}{\sqrt{1-\eta}}(x_1+x_2),\,  y_3= x_1;\\
&[y_1,y_2]=\frac{1-w}{\sqrt{\mu}}y_2+\frac{2w}{\sqrt{\mu(1-\eta)}}y_3,\,
[y_2,y_3]= 0, \, [y_3,y_1]=-\frac{1+w}{\sqrt{\mu}}y_3.
\end{align*}
\end{itemize}

With respect to the orthonormal basis $\{y_i\}$, we have the following:

\begin{Thm}\label{thm: c<1-curvature}
The Ricci operator $\Ric$ of the metric $h$ on $G_c$ with $c<1$ is expressed as follows: Let $w=\sqrt{1-c}$.
\begin{itemize}
  \item [(1)] If $[h]=\left[\begin{matrix}\hspace{8pt}\frac{1}{2w}&\frac{1+w}{2w}&0\\-\frac{1}{2w}&\frac{w-1}{2w}&0\\ \hspace{8pt}0&0&1\end{matrix}\right]^{-1}\left[\begin{matrix} 0 & 0 & 1\\ 0 & 1 & 0\\ 1 & 0 & 0 \end{matrix} \right]\left[\begin{matrix}\hspace{8pt}\frac{1}{2w}&\frac{1+w}{2w}&0\\-\frac{1}{2w}&\frac{w-1}{2w}&0\\ \hspace{8pt}0&0&1\end{matrix}\right]$,
\begin{align*}
[\Ric]_{\{y_i\}}
&= \left[\begin{matrix} 0 & 0 & 0 \\ 0 & -w(w-1) & w(w-1) \\
0 & -w(w-1) & w(w-1) \end{matrix} \right]
\end{align*}
$\Ric$ is of O'Neill type $\{11,1\}$ or $\{21\}$  when $w=1$ or $w\ne1$, respectively.

  \item [(2)] If $[h]=\left[\begin{matrix}\hspace{8pt}\frac{1}{2w}&\frac{1+w}{2w}&0\\-\frac{1}{2w}&\frac{w-1}{2w}&0\\ \hspace{8pt}0&0&1\end{matrix}\right]^{-1}\left[\begin{matrix} 1 & 0 & 0\\ 0 & 0 & 1\\ 0 & 1 & 0 \end{matrix} \right]\left[\begin{matrix}\hspace{8pt}\frac{1}{2w}&\frac{1+w}{2w}&0\\-\frac{1}{2w}&\frac{w-1}{2w}&0\\ \hspace{8pt}0&0&1\end{matrix}\right]$,
 \begin{align*}
[\Ric]_{\{y_i\}}
&= \left[\begin{matrix} 0 & 0 & 0 \\ 0 & -w(w+1) & w(w+1) \\
0 & -w(w+1) & w(w+1) \end{matrix} \right] \quad(\text{O'Neill type $\{21\}$})
\end{align*}

  \item [(3)] If $[h]=\left[\begin{matrix}\hspace{8pt}\frac{1}{2w}&\frac{1+w}{2w}&0\\-\frac{1}{2w}&\frac{w-1}{2w}&0\\ \hspace{8pt}0&0&1\end{matrix}\right]^{-1}\left[\begin{matrix} 1 & 1 & 0\\ 1 & 1 & \mu\\ 0 & \mu & 0 \end{matrix} \right]\left[\begin{matrix}\hspace{8pt}\frac{1}{2w}&\frac{1+w}{2w}&0\\-\frac{1}{2w}&\frac{w-1}{2w}&0\\ \hspace{8pt}0&0&1\end{matrix}\right]$ with $\mu>0$,
 \begin{align*}
[\Ric]_{\{y_i\}}
&= \left[\begin{matrix} -\frac{2w^2}{\mu^2} & \frac{w(1+w)}{\mu^2} & -\frac{w(1+w)}{\mu^2} \\
\frac{w(1+w)}{\mu^2} & \frac{w(3w-1)}{2\mu^2} & \frac{w(1+w)}{2\mu^2} \\
\frac{w(1+w)}{\mu^2} & -\frac{w(1+w)}{2\mu^2} & \frac{z(1+5w)}{2\mu^2}\end{matrix} \right]
\end{align*}
$\Ric$ is of O'Neill type $\{11,1\}$ or $\{21\}$  when $w=1$ or $w\ne1$, respectively.

  \item [(4)] If $[h]=\left[\begin{matrix}\hspace{8pt}\frac{1}{2w}&\frac{1+w}{2w}&0\\-\frac{1}{2w}&\frac{w-1}{2w}&0\\ \hspace{8pt}0&0&1\end{matrix}\right]^{-1}\left[\begin{matrix} 1 & 0 & \hspace{8pt}0\\ 0 & 1 & \hspace{8pt}0\\ 0 & 0 & -\mu \end{matrix} \right]\left[\begin{matrix}\hspace{8pt}\frac{1}{2w}&\frac{1+w}{2w}&0\\-\frac{1}{2w}&\frac{w-1}{2w}&0\\ \hspace{8pt}0&0&1\end{matrix}\right]$ with $\mu>0$,
\begin{align*}
[\Ric]_{\{y_i\}}
&=\diag\{\tfrac{2(1+w)}{\mu}, \tfrac{2(1-w)}{\mu}, \tfrac{2(1+w^2)}{\mu} \}
\quad(\text{O'Neill type $\{11,1\}$})
\end{align*}

  \item [(5)] If $[h]=\left[\begin{matrix}\hspace{8pt}\frac{1}{2w}&\frac{1+w}{2w}&0\\-\frac{1}{2w}&\frac{w-1}{2w}&0\\ \hspace{8pt}0&0&1\end{matrix}\right]^{-1}\left[\begin{matrix} 1 & \hspace{8pt}0 & 0\\ 0 & -1 & 0\\ 0 & \hspace{8pt}0 & \mu \end{matrix} \right]\left[\begin{matrix}\hspace{8pt}\frac{1}{2w}&\frac{1+w}{2w}&0\\-\frac{1}{2w}&\frac{w-1}{2w}&0\\ \hspace{8pt}0&0&1\end{matrix}\right]$ with $\mu>0$,
\begin{align*}
[\Ric]_{\{y_i\}}
&=\diag\{-\tfrac{2(1+w)}{\mu}, -\tfrac{2(1+w^2)}{\mu}, \tfrac{2(1-w)}{\mu} \}
\quad(\text{O'Neill type $\{11,1\}$})
\end{align*}

  \item [(6)] If $[h]=\left[\begin{matrix}\hspace{8pt}\frac{1}{2w}&\frac{1+w}{2w}&0\\-\frac{1}{2w}&\frac{w-1}{2w}&0\\ \hspace{8pt}0&0&1\end{matrix}\right]^{-1}\left[\begin{matrix} -1 & 0 & 0\\ \hspace{8pt}0 & 1 & 0\\ \hspace{8pt}0 & 0 & \mu \end{matrix} \right]\left[\begin{matrix}\hspace{8pt}\frac{1}{2w}&\frac{1+w}{2w}&0\\-\frac{1}{2w}&\frac{w-1}{2w}&0\\ \hspace{8pt}0&0&1\end{matrix}\right]$ with $\mu>0$,
\begin{align*}
[\Ric]_{\{y_i\}}
&=\diag\{\tfrac{2(w-1)}{\mu}, -\tfrac{2(1+w^2)}{\mu}, -\tfrac{2(1+w)}{\mu}\}
\quad(\text{O'Neill type $\{11,1\}$})
\end{align*}

 \item [(7)] If $[h]=\left[\begin{matrix}\hspace{8pt}\frac{1}{2w}&\frac{1+w}{2w}&0\\-\frac{1}{2w}&\frac{w-1}{2w}&0\\ \hspace{8pt}0&0&1\end{matrix}\right]^{-1}\left[\begin{matrix} 0 & 1 & 0\\ 1 & 0 & 0\\ 0 & 0 & \mu \end{matrix} \right]\left[\begin{matrix}\hspace{8pt}\frac{1}{2w}&\frac{1+w}{2w}&0\\-\frac{1}{2w}&\frac{w-1}{2w}&0\\ \hspace{8pt}0&0&1\end{matrix}\right]$ with $\mu>0$,
\begin{align*}
[\Ric]_{\{y_i\}}
&=\diag\{-\tfrac{2}{\mu},-\tfrac{2}{\mu},-\tfrac{2}{\mu}\}
\quad(\text{O'Neill type $\{11,1\}$})\\
\end{align*}

\item [(8)] If $[h]=\left[\begin{matrix}\hspace{8pt}\frac{1}{2w}&\frac{1+w}{2w}&0\\-\frac{1}{2w}&\frac{w-1}{2w}&0\\ \hspace{8pt}0&0&1\end{matrix}\right]^{-1}\left[\begin{matrix} 0 & 1 & 0\\ 1 & 1 & 0\\ 0 & 0 & \mu  \end{matrix} \right]\left[\begin{matrix}\hspace{8pt}\frac{1}{2w}&\frac{1+w}{2w}&0\\-\frac{1}{2w}&\frac{w-1}{2w}&0\\ \hspace{8pt}0&0&1\end{matrix}\right]$ with $\mu>0$,
\begin{align*}
[\Ric]_{\{y_i\}}
&= \left[\begin{matrix} -\frac{2}{\mu} & 0 & 0 \\
0 & -\frac{2(w^2-w+1)}{\mu} & \frac{2w(w-1)}{\mu} \\
0 & -\frac{2w(w-1)}{\mu} & \frac{2(w^2-w-1)}{\mu} \end{matrix} \right]
\end{align*}
$\Ric$ is of O'Neill type $\{11,1\}$ or $\{21\}$  when $w=1$ or $w\ne1$, respectively.

\item [(9)] If $[h]=\left[\begin{matrix}\hspace{8pt}\frac{1}{2w}&\frac{1+w}{2w}&0\\-\frac{1}{2w}&\frac{w-1}{2w}&0\\ \hspace{8pt}0&0&1\end{matrix}\right]^{-1}\left[\begin{matrix} 0 & \hspace{8pt}1 & 0\\ 1 & -1 & 0\\ 0 & \hspace{8pt}0 & \mu  \end{matrix} \right]\left[\begin{matrix}\hspace{8pt}\frac{1}{2w}&\frac{1+w}{2w}&0\\-\frac{1}{2w}&\frac{w-1}{2w}&0\\ \hspace{8pt}0&0&1\end{matrix}\right]$ with $\mu>0$,
\begin{align*}
[\Ric]_{\{y_i\}}
&= \left[\begin{matrix} -\frac{2}{\mu} & 0 & 0 \\
\hspace{8pt}0 & \frac{2(w^2-w-1)}{\mu} & \frac{2w(w-1)}{\mu} \\
\hspace{8pt}0 & -\frac{2w(w-1)}{\mu} & -\frac{2(w^2-w+1)}{\mu} \end{matrix} \right]
\end{align*}
$\Ric$ is of O'Neill type $\{11,1\}$ or $\{21\}$  when $w=1$ or $w\ne1$, respectively.

 \item [(10-1)] If $[h]=\left[\begin{matrix}\hspace{8pt}\frac{1}{2w}&\frac{1+w}{2w}&0\\-\frac{1}{2w}&\frac{w-1}{2w}&0\\ \hspace{8pt}0&0&1\end{matrix}\right]^{-1}\left[\begin{matrix} 1 & 1 & 0\\ 1 & \tau & 0\\ 0 & 0 & \nu \end{matrix} \right]\left[\begin{matrix}\hspace{8pt}\frac{1}{2w}&\frac{1+w}{2w}&0\\-\frac{1}{2w}&\frac{w-1}{2w}&0\\ \hspace{8pt}0&0&1\end{matrix}\right]$ with $\nu>0$ and $\tau<1$,\\
\begin{align*}
[\Ric]_{\{y_i\}}
&= \left[\begin{matrix} -\frac{2(w^2+w+1-(1+w)\tau)}{\nu(1-\tau)} & 0 & -\frac{2w(1+w)}{\nu\sqrt{1-\tau}} \\
0 & \frac{2(w^2\tau+\tau-1)}{\nu(1-\tau)} & 0 \\
\frac{2w(1+w)}{\nu\sqrt{1-\tau}} & 0 & \frac{2(w^2+w-1+(1-w)\tau)}{\nu\sqrt{1-\tau}} \end{matrix} \right]
\end{align*}
$\Ric$ is of O'Neill type $\{11,1\}$, $\{1z\bar{z}\}$, $\{21\}$   when $\tau(\tau-1+w^2) >0, <0, =0$, respectively.

 \item [(10-2)] If $[h]=\left[\begin{matrix}\hspace{8pt}\frac{1}{2w}&\frac{1+w}{2w}&0\\-\frac{1}{2w}&\frac{w-1}{2w}&0\\ \hspace{8pt}0&0&1\end{matrix}\right]^{-1}\left[\begin{matrix} 1 & 1 & 0\\ 1 & \tau & 0\\ 0 & 0 & \nu \end{matrix} \right]\left[\begin{matrix}\hspace{8pt}\frac{1}{2w}&\frac{1+w}{2w}&0\\-\frac{1}{2w}&\frac{w-1}{2w}&0\\ \hspace{8pt}0&0&1\end{matrix}\right]$ with $\nu<0$ and $\tau>1$,\\
\begin{align*}
[\Ric]_{\{y_i\}}
&= \left[\begin{matrix} \frac{2(w^2+w+1-(1+w)\tau)}{\nu(\tau-1)} & -\frac{2w(1+w)}{\nu\sqrt{\tau-1}} & 0 \\
-\frac{2w(1+w)}{\nu\sqrt{\tau-1}} & -\frac{2(w^2+w-1+(1-w)\tau)}{\nu(\tau-1)} & 0 \\
0 & 0 & -\frac{2(w^2\tau+\tau-1)}{\nu(\tau-1)}\end{matrix} \right]\\
&(\text{O'Neill type $\{11,1\}$})
\end{align*}

  \item [(11)] If $[h]=\left[\begin{matrix}\hspace{8pt}\frac{1}{2w}&\frac{1+w}{2w}&0\\-\frac{1}{2w}&\frac{w-1}{2w}&0\\ \hspace{8pt}0&0&1\end{matrix}\right]^{-1}\left[\begin{matrix} -1 & \hspace{8pt}1 & 0\\ \hspace{8pt}1 & -\eta & 0\\ \hspace{8pt}0 & \hspace{8pt}0 & \mu \end{matrix} \right]\left[\begin{matrix}\hspace{8pt}\frac{1}{2w}&\frac{1+w}{2w}&0\\-\frac{1}{2w}&\frac{w-1}{2w}&0\\ \hspace{8pt}0&0&1\end{matrix}\right]$ with $\mu>0$, $\eta<1$,
\begin{align*}
[\Ric]_{\{y_i\}}
&= \left[\begin{matrix} \frac{2(w^2\eta+\eta-1)}{\mu(1-\eta)} & 0 & 0 \\
0 & \frac{2(w^2+w-1+\eta-w\eta)}{\mu(1-\eta)} & \frac{2w(1+w)}{\mu\sqrt{1-\eta}} \\
0 & -\frac{2w(1+w)}{\mu\sqrt{1-\eta}} & -\frac{2(w^2+w+1 - (1+w)\eta)}{\mu(1-\eta)} \end{matrix} \right]
\end{align*}
$\Ric$ is of O'Neill type $\{11,1\}$, $\{1z\bar{z}\}$, $\{21\}$   when $\eta(\eta-1+w^2) >0, <0, =0$, respectively.
\end{itemize}
\end{Thm}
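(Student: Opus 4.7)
The plan is to carry out, for each of the eleven normal forms, a Koszul-formula computation starting from the explicit orthonormal basis $\{y_1,y_2,y_3\}$ and the structure constants listed immediately before the theorem. These brackets all have the shape $[y_i,y_j]=\sum_k c^k_{ij} y_k$ with small rational (in $w,\mu,\nu,\tau,\eta$) coefficients, so the Koszul formula
$$
2h(\nabla_{y_i} y_j, y_k) = h([y_i,y_j],y_k)+h([y_k,y_i],y_j)+h([y_k,y_j],y_i)
$$
combined with $h(y_i,y_j)=\epsilon_i\delta_{ij}$ (with $\epsilon_1=\epsilon_2=1$, $\epsilon_3=-1$) immediately yields the nine connection coefficients $\nabla_{y_i} y_j$ in terms of the structure constants.

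Next, for each case I would feed these $\nabla_{y_i} y_j$ into the Riemann curvature definition $R_{uv}=\nabla_{[u,v]}-[\nabla_u,\nabla_v]$ to obtain $h(R_{y_iy_k} y_j, y_\ell)$, then take the trace to form $\ric(y_j,y_k)=\sum_i \epsilon_i h(R_{y_j y_i} y_k, y_i)$ (equivalently, use the boxed formula $\ric(u,v) = h(R_{uy_1}v,y_1)+h(R_{uy_2}v,y_2)-h(R_{uy_3}v,y_3)$ from Section~\ref{curvature-Lorentzian}). Finally, using $h(\Ric(u),v)=\ric(u,v)$, the matrix $[\Ric]_{\{y_i\}}$ is recovered by multiplying $[\ric]_{\{y_i\}}$ on the left by $J_{2,1}=\diag\{1,1,-1\}$; this explains the sign pattern $R_{3j}=-R_{j3}$ already observed in Section~\ref{curvature-Lorentzian}, and produces precisely the block shapes listed in cases (1)--(11).

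Once each $[\Ric]_{\{y_i\}}$ is written down, the O'Neill-type assertions are read off from the two lemmas. All of the displayed matrices fall into exactly the two templates covered by Lemma~\ref{lemma:OT1} (namely the block $\diag\{a, B\}$ with $B$ having off-diagonal entries of opposite sign, or the swapped version $\diag\{B,a\}$) and Lemma~\ref{lemma:OT2} (the symmetric-block case with $(1,2)$ entry equal to the $(2,1)$ entry). For each case I would identify the parameters $b,d,r$ of Lemma~\ref{lemma:OT1} and compute the discriminant $D=(b-d)^2-4r^2$: for cases (4)--(7) one has $r=0$ or is in the symmetric-block situation of Lemma~\ref{lemma:OT2}, giving type $\{11,1\}$; for (1), (3), (8), (9) one finds $D=0$ when $w\neq 1$ and $r=0$ when $w=1$, giving the claimed $\{21\}$/$\{11,1\}$ dichotomy; for (2) the same kind of sign check gives $\{21\}$; and for (10-1), (10-2), (11), a direct calculation of $D$ yields $D=16w^2(1+w)^2\tau(\tau-1+w^2)/\bigl(\nu^2(1-\tau)^2\bigr)$ (or the analogous expression in $\eta$), which by Lemma~\ref{lemma:OT1} gives exactly the trichotomy stated.

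The main obstacle is not conceptual but bookkeeping: each case involves computing nine Christoffel symbols, then eighteen independent curvature components, then six Ricci entries, all symbolic in several parameters. To keep the argument tractable I would organize the work into a single schematic computation carried out once in general variables and then specialize, and (as the authors note) verify the eleven resulting matrices with a computer algebra system. The only case where a small additional argument is required is (2) in Theorem~\ref{thm: c=1-curvature}, which is not in the block form covered by Lemmas~\ref{lemma:OT1}--\ref{lemma:OT2}; here I would exhibit an explicit element of $O(2,1)$ (a Lorentzian boost in the $y_2y_3$-plane composed with a reflection) conjugating $[\Ric]_{\{y_i\}}$ into the standard $\{21\}$ normal form, as the authors do in their proof of that theorem.
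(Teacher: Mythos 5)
Your overall plan (Koszul formula from the listed brackets, then $R_{uv}=\nabla_{[u,v]}-[\nabla_u,\nabla_v]$, then $\ric$ by the trace formula and $[\Ric]=J_{2,1}[\ric]$, with the symbolic bookkeeping delegated to a computer algebra check) is exactly how the paper obtains the eleven matrices, and your use of Lemmas~\ref{lemma:OT1} and \ref{lemma:OT2} for the O'Neill types is also the paper's route for most cases. However, there is a genuine gap in your treatment of case (3) of \emph{this} theorem. You list (3) together with (1), (8), (9) as falling under Lemma~\ref{lemma:OT1} via the computation ``$D=0$ when $w\neq1$, $r=0$ when $w=1$'', but the Ricci matrix in (3) is a full matrix: its $(1,2)$, $(1,3)$, $(2,1)$, $(3,1)$ entries are all nonzero (for every $w>0$, including $w=1$), so it is not of either block shape $\left[\begin{smallmatrix}a&0&0\\0&b&r\\0&-r&d\end{smallmatrix}\right]$ or $\left[\begin{smallmatrix}b&0&r\\0&a&0\\-r&0&d\end{smallmatrix}\right]$ covered by Lemma~\ref{lemma:OT1}, nor of the symmetric-block shape of Lemma~\ref{lemma:OT2}. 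This is precisely why the paper's proof singles out (3): it observes that the eigenvalues are $-\tfrac{2w^2}{\mu^2}$ and $\tfrac{2w^2}{\mu^2}$ (the latter of multiplicity two), and then distinguishes $w=1$, where the repeated eigenvalue has a two-dimensional eigenspace and an explicit $O(2,1)$ basis diagonalizes $\Ric$ (type $\{11,1\}$), from $w\neq1$, where the eigenspace is one-dimensional and an explicit $O(2,1)$ conjugation brings $\Ric$ to the $\{21\}$ normal form. You do anticipate that a full-matrix case needs such a bespoke $O(2,1)$ argument, but you attribute it only to case (2) of Theorem~\ref{thm: c=1-curvature} (the $G_1$ theorem), which is a different statement; for the theorem under review you must supply the analogous eigenvalue/eigenspace argument for case (3), and without it the claimed $\{11,1\}$ versus $\{21\}$ dichotomy there is unjustified.

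A smaller point: in cases (10-1), (10-2), (11) your discriminant should come out as $D=\frac{16w^2}{\nu^2(1-\tau)^2}\,\tau(\tau-1+w^2)$ (and the analogue in $\eta$), not with an extra factor $(1+w)^2$ multiplying $\tau(\tau-1+w^2)$; since $(1+w)^2>0$ this slip does not affect the sign trichotomy, but the formula as written is not what the block entries give. Also note that (10-2) is handled by Lemma~\ref{lemma:OT2} (symmetric $2\times2$ block plus an isolated diagonal entry), so only type $\{11,1\}$ occurs there, rather than a trichotomy.
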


\begin{proof}
Applying Lemma~\ref{lemma:OT1} and ~\ref{lemma:OT2},
except for (3), the O'Neill types in all cases can be easily obtained.
(3) Let $[\Ric]_{\{y_i\}}
= \left[\begin{matrix} -\frac{2w^2}{\mu^2} & \frac{w(1+w)}{\mu^2} & -\frac{w(1+w)}{\mu^2} \\
\frac{w(1+w)}{\mu^2} & \frac{w(3w-1)}{2\mu^2} & \frac{w(1+w)}{2\mu^2} \\
\frac{w(1+w)}{\mu^2} & -\frac{w(1+w)}{2\mu^2} & \frac{z(1+5w)}{2\mu^2}\end{matrix} \right]$.
Then $[\Ric]_{\{y_i\}}$  has two eigenvalues $-\frac{2w^2}{\mu^2},\frac{2w^2}{\mu^2}$ ($\frac{2w^2}{\mu^2}$ has multiplicity two).
First assume $w=1$.
If we choose an orthonormal basis $\frakB'$ given by
$$
[\id]_{\frakB',\frakB} =
\left[\begin{matrix} -1& \hspace{8pt}1& \hspace{8pt}1 \\ \hspace{8pt}\frac{1}{2} & \hspace{8pt}1 & \hspace{8pt}\frac{1}{2} \\ \hspace{8pt}\frac{1}{2} & -1 & -\frac{3}{2} \end{matrix}\right]\in O(2,1),
$$
then
$$
[\Ric]_{\frakB'} = \left[\begin{matrix}-\frac{2}{\mu^2}&0&0\\\hspace{8pt}0&\frac{2}{\mu^2}&0\\\hspace{8pt}0&0&\frac{2}{\mu^2}\end{matrix}\right]
$$
Hence $\Ric$ is of O'Neill type $\{11,1\}$.
Now suppose $w\ne 1$. Then the dimension of the eigenspace of the eigenvalue  $\frac{2w^2}{\mu^2}$ is $1$.
Hence $\Ric$ is of O'Neill type $\{21\}$.
In this case, if we choose an orthonormal basis $\frakB'$ given by
$$
[\id]_{\frakB',\frakB} =\left\{\begin{array}{r}
\left[\begin{matrix} 1& \frac{\mu\sqrt{1+w}}{2w\sqrt{w-1}}& -\frac{\mu\sqrt{1+w}}{2w\sqrt{w-1}} \\
-\frac{1+w}{4w} & \frac{4w^2(w^2-1)+(15w^2-2w-1)\mu^2}{16w^2\sqrt{w^2-1}\mu} & \frac{4w^2(w^2-1)-(15w^2-2w-1)\mu^2}{16w^2\sqrt{w^2-1}\mu} \\
-\frac{w+1}{4w} & \frac{4w^2(w^2-1)-(17w^2+2w+1)\mu^2}{16w^2\sqrt{w^2-1}\mu} & \frac{4w^2(w^2-1)+(17w^2+2w+1)\mu^2}{16w^2\sqrt{w^2-1}\mu} \end{matrix}\right],\\ \text{if $w>1$};\\
\left[\begin{matrix} 1& \frac{\mu\sqrt{1+w}}{2w\sqrt{1-w}}& \frac{\mu\sqrt{1+w}}{2w\sqrt{1-w}} \\
-\frac{1+w}{4w} & \frac{4w^2(1-w^2)+(15w^2-2w-1)\mu^2}{16w^2\sqrt{1-w^2}\mu} & \frac{4w^2(w^2-1)+(15w^2-2w-1)\mu^2}{16w^2\sqrt{1-w^2}\mu} \\
-\frac{1+w}{4w} & \frac{4w^2(w^2-1)+(17w^2+2w+1)\mu^2}{16w^2\sqrt{1-w^2}\mu} & \frac{4w^2(w^2-1)-(17w^2+2w+1)\mu^2}{16w^2\sqrt{1-w^2}\mu} \end{matrix}\right],\\
\text{ if $0<w<1$},
\end{array} \right.
$$
then
$$
[\Ric]_{\frakB'} = \left\{\begin{array}{ll} \left[\begin{matrix}-\frac{2w^2}{\mu^2}&0&0\\\hspace{8pt}0&\frac{2w^2}{\mu^2}-1&1\\\hspace{8pt}0&-1&\frac{2w^2}{\mu^2}+1\end{matrix}\right],
&w>1;\\
\left[\begin{matrix}-\frac{2w^2}{\mu^2}&0&0\\\hspace{8pt}0&\frac{2w^2}{\mu^2}+1&1\\\hspace{8pt}0&-1&\frac{2w^2}{\mu^2}-1\end{matrix}\right], &0<w<1
\end{array} \right.
$$
The proof is finished.
\end{proof}

\begin{Rmk}
Three-dimensional Lie groups, having a flat Lorentzian metric, have been classified in \cite{C1} and \cite{Nomizu}.
We classify explicitly three-dimensional Lorentzian non-unimodular Lie groups  whose metrics have constant section curvatures.

Let $(G,h)$ be a connected, simply connected three-dimensional Lorentzian non-unimodular Lie
group $G$.
Then we have:
\begin{enumerate}
  \item $(G,h)$ is flat if and only if $h$ is equivalent up to automorphism to a metric whose associated matrix is of the form
\begin{align*}
 (a)&\left[\begin{matrix} 1 & 0 & 0 \\ 0 & 0 & 1 \\ 0 & 1 & 0 \end{matrix} \right] \text{on $G_I$};\\
 (b)&\left[\begin{matrix}-2&-1&0\\\hspace{8pt}1&\hspace{8pt}1&0\\ \hspace{8pt}0&\hspace{8pt}0&1\end{matrix}\right]^{-1}
  \left[\begin{matrix} 0 & 0 & 1\\ 0 & \mu & 0\\ 1 & 0 & 0 \end{matrix} \right]
  \left[\begin{matrix}-2&-1&0\\\hspace{8pt}1&\hspace{8pt}1&0\\ \hspace{8pt}0&\hspace{8pt}0&1\end{matrix}\right]
  \text{with $\mu>0$ on $G_1$};\\
 (c)&\left[\begin{matrix}\hspace{8pt}\frac{1}{2}&1&0\\-\frac{1}{2}&0&0\\\hspace{8pt}0&0&1\end{matrix}\right]^{-1}
\left[\begin{matrix} 0 & 0 & 1\\ 0 & 1 & 0\\ 1 & 0 & 0 \end{matrix}\right]
\left[\begin{matrix}\hspace{8pt}\frac{1}{2}&1&0\\-\frac{1}{2}&0&0\\\hspace{8pt}0&0&1\end{matrix}\right]
\text{on $G_0$.}
\end{align*}

  \item $(G,h)$ has positive constant sectional curvature if and only if
  $h$ is equivalent up to automorphism to a metric whose associated matrix is of the form
\begin{align*}
 &\left[\begin{matrix} 1 & 0 & \hspace{8pt}0 \\ 0 & 1 & \hspace{8pt}0 \\ 0 & 0 & -\mu \end{matrix} \right] \text{with $\mu>0$ on $G_I$.}
\end{align*}

  \item $(G,h)$ has negative constant sectional curvature if and only if
  $h$ is equivalent up to automorphism to a metric whose associated matrix is of the form
\begin{align*}
(a)&\left[\begin{matrix} 1 & \hspace{8pt}0 & 0 \\ 0 & -1 & 0 \\ 0 & \hspace{8pt}0 & \mu \end{matrix} \right] \text{with $\mu>0$ on $G_I$};\\
(b)&\left[\begin{matrix}\hspace{8pt}\frac{1}{2w}&\frac{1+w}{2w}&0\\-\frac{1}{2w}&\frac{w-1}{2w}&0\\ \hspace{8pt}0&0&1\end{matrix}\right]^{-1}\left[\begin{matrix} 0 & 1 & 0\\ 1 & 0 & 0\\ 0 & 0 & \mu \end{matrix} \right]\left[\begin{matrix}\hspace{8pt}\frac{1}{2w}&\frac{1+w}{2w}&0\\-\frac{1}{2w}&\frac{w-1}{2w}&0\\ \hspace{8pt}0&0&1\end{matrix}\right] \\
&\text{ with $\mu>0$ on $G_c, c<1$ where $w=\sqrt{1-c}$.}
\end{align*}

\end{enumerate}

\end{Rmk}

\end{document}